\def\jump#1{\llbracket #1 \rrbracket }
\newtheorem{theorem}{Theorem}[section]
\newtheorem{lemma}{Lemma}[section]
\newtheorem{remark}{Remark}[section]
\providecommand{\keywords}[1]
{
	\small	
	\textbf{\textit{Keywords:}} #1
}
\title{{\large  \bf  Uniformly High-Order Structure-Preserving Discontinuous Galerkin Methods for Euler Equations with Gravitation: Positivity and Well-Balancedness}} 
\author{{Kailiang Wu}\thanks{Department of Mathematics, The Ohio State University, Columbus, OH 43210, USA ({\tt wu.3423@osu.edu}).}{~~~and~~Yulong Xing}\thanks{Department of Mathematics, The Ohio State University, Columbus, OH 43210, USA ({\tt xing.205@osu.edu}). The work of Y. Xing is partially supported by the NSF grant DMS-1753581.}}
\date{May 13, 2020}
\begin{document}

	\maketitle

\vspace{-5mm}

\begin{abstract}
	This paper presents a class of novel high-order accurate discontinuous Galerkin (DG) schemes for the compressible Euler equations under gravitational fields. A notable feature of these schemes is that they are well-balanced for a general hydrostatic equilibrium state, and at the same time, provably preserve the positivity of density and pressure.  
	In order to achieve the well-balanced and positivity-preserving properties simultaneously, a novel DG spatial discretization is carefully designed with suitable source term reformulation and a properly modified Harten-Lax-van Leer-contact (HLLC) flux. 
	Based on some technical decompositions as well as several key properties of the admissible states and HLLC flux, rigorous positivity-preserving analyses are carried out. It is proven that the resulting well-balanced DG schemes, coupled with strong stability preserving time discretizations, satisfy a weak positivity property, which implies that one can apply a simple existing limiter to effectively enforce the positivity-preserving property, without losing high-order accuracy and conservation. 
	The proposed methods and analyses are applicable to the Euler system with {\em general} equation of state. 
	Extensive one- and two-dimensional numerical tests demonstrate the desired properties of these schemes, including the exact preservation of the equilibrium state, the ability to capture small perturbation of such state, the robustness for solving problems involving low density and/or low pressure, and good resolution for smooth and discontinuous solutions.
	
	\vspace{6mm}
\noindent	
\keywords{Discontinuous Galerkin method, Hyperbolic balance laws, Positivity-preserving, 
	Well-balanced, Compressible Euler equations, Gravitational field}

\end{abstract}

\newpage

	\section{Introduction}
In this paper, we present highly accurate and robust numerical methods for 
the compressible Euler equations with gravitation, which has wide application in astrophysics and atmospheric science. 
In the $d$-dimensional case, this model can be written as the following nonlinear system of balance laws
\begin{equation}\label{eq:dD}
{\bf U}_t + {\bm \nabla} \cdot {\bf F} ( {\bf U} ) = {\bf S} ( {\bf U}, {\bf x} ),
\end{equation}
with
\begin{equation}\label{eq:dD1}
{\bf U} = \begin{pmatrix}
\rho
\\
{\bf m}
\\
E
\end{pmatrix},
\quad  
{\bf F} ( {\bf U} )= \begin{pmatrix}
\rho {\bf u}
\\
\rho {\bf u} \otimes {\bf u} + p {\bf I}_d
\\
(E+p) {\bf u}
\end{pmatrix},
\quad 
{\bf S}({\bf U},{\bf x}) = \begin{pmatrix}
0
\\
-\rho {\bm \nabla} \phi 
\\
-  {\bf m} \cdot {\bm \nabla} \phi
\end{pmatrix}.
\end{equation}
Here ${\bf m}=\rho {\bf u}$ denotes the momentum vector;  
$\rho,$ ${\bf u}$, and $p$ denote the fluid density, velocity and pressure, respectively; 
${\bf I}_d$ is the identity matrix of size $d$; $E=\frac12 \rho \| {\bf u} \|^2 + \rho e$ is the total non-gravitational energy with $e$ denoting the specific internal energy. The source terms at the right hand side of \eqref{eq:dD} represent the effect of the gravitational field, and $\phi({\bf x})$ is the static gravitational potential. 
An additional thermodynamic equation relating state variables, the so-called equation of state (EOS), is needed to close the system \eqref{eq:dD1}. 
A general EOS can be written as 
$e={\mathcal E}(\rho,p).$ 
For ideal gases it is given by
\begin{equation}\label{eq:IEOS}
p=(\gamma-1)\rho e=(\gamma - 1) \left( E - \frac{ \| {\bf m} \|^2 }{2\rho} \right),
\end{equation}
where the constant $\gamma>1$ denotes the adiabatic index. 
Although we will mainly focus on the ideal EOS for better legibility, the methods and analyses presented in this paper are readily extensible to general EOS 
as shown in Appendix \ref{app:gEOS}.

The equations \eqref{eq:dD} with \eqref{eq:IEOS} form a hyperbolic system of balance laws and admit (nontrivial) hydrostatic equilibrium solutions, in which the gravitational source term is exactly balanced by the flux gradient, with two well-known examples being the isothermal and polytropic equilibria.
The astrophysical and atmospheric applications often involve nearly equilibrium flows, which are small perturbation of the hydrostatic equilibrium states. 
Standard numerical methods may not balance the contribution of the flux and gravitational source terms, and generate large numerical error,
especially for a long-time simulation, e.g., in modeling star and galaxy formation. 
To address the issue, one may need to conduct the simulation on a very refined mesh, which can be  time-consuming especially for the multidimensional problems. 
To save the computational cost, well-balanced methods, which preserve exactly the discrete version of these steady-state solutions
up to machine accuracy, are designed to effectively capture these nearly equilibrium flows well on relatively coarse meshes. 
Study of well-balanced methods has attracted much attention over the past few decades.
Most of them were proposed for the shallow water equations over a non-flat bottom topology,
another prototype example of hyperbolic balance laws; see, e.g., 
\cite{BV1994,GL1996,L1998b,X2002,ABBKP2004,XS2005,XZS2010,XS2014} and the references therein.  
In recent years, well-balanced numerical methods for the Euler equations \eqref{eq:dD} with gravitation have been designed
within several different frameworks, including but not limited to the finite volume methods \cite{LB1998,BKLL2004,KM2014,CK2015,LiXingWBFV2016,KM2016,KlingenbergSISC2019,GROSHEINTZLAVAL2019324}, 
gas-kinetic schemes \cite{XLC2010,LXL2011}, 
finite difference methods \cite{XS2013,GM2016,LXCMA2018}, and finite element discontinuous Galerkin (DG) methods \cite{LiXingWBDG2016,CZ2017,LXJCP2018,Veiga2019}. 
Recently, comparison between high-order DG method and well-balanced DG methods was carried out in \cite{Veiga2019}.


Besides maintaining the hydrostatic equilibrium states, another numerical challenge for the system \eqref{eq:dD} is
to preserve the positivity of density and pressure. 
Such positivity property is not only necessary for the physical nature of the solution, but also 
crucial for the robustness of numerical computations. 
In fact, when negative density or/and pressure are produced, 
numerical instability can develop and cause the breakdown of numerical simulations. 
However, most high-order accurate schemes for the Euler equations with gravity are generally not positivity-preserving, and thus may suffer from a risk of failure when simulating problems with low density, low pressure and/or strong discontinuity. 
In recent years, high-order bound-preserving numerical schemes have been actively studied for hyperbolic systems. 
Most of them are built upon two types of limiting approaches: a simple scaling limiter \cite{zhang2010b} for the reconstructed or evolved solution polynomials in
finite volume/DG methods; see, e.g., \cite{zhang2010,zhang2010b,XZS2010,ZHANG2017301,Wu2017a,WuShu2019,wu2020provably},  
or a flux-correction limiter \cite{Xu2014,Hu2013,WuTang2015}. 
For more developments and applications, we refer to the recent review \cite{Shu2018} and the references therein.  
Based on the simple scaling limiter, high-order positivity-preserving DG schemes 
were constructed for the Euler equations without source term \cite{zhang2010b,zhang2012maximum} and with source terms including 
the gravitational source term \cite{zhang2011}. The bound-preserving framework was also extended in \cite{Wu2017} to the general relativistic Euler equations 
under strong gravitational fields. 

The main objective of this paper is to develop a class of uniformly high-order DG methods, 
which are well-balanced and at the same time provably positivity-preserving for the Euler equations with gravitation. 
Most of the existing methods possess only one of these two properties.
A recent work to satisfy both properties was studied in \cite{Thomann2019}, based on a new approximate Riemann solver using relaxation approach. 
The accuracy of the schemes in \cite{Thomann2019} was limited to second-order, yet its extension to higher-order is challenging.   
The framework established in this paper would be the first one, to our best knowledge, that achieves this goal with arbitrarily high-order accurate 
schemes. The efforts in this paper are summarized as follows. 	
\begin{enumerate}[leftmargin=*]
	\item One key novelty of this work is to devise novel high-order well-balanced DG schemes, 
	with suitable source term treatments and proper well-balanced numerical fluxes, so that  
	the desired positivity-preserving property is also accommodated in the discretization at the same time.  
	\item Our source term discretization is motivated by \cite{XS2013}, 
	where the gravitational source is first reformulated into an equivalent special form using the corresponding hydrostatic equilibrium solution. For the well-balancedness, 
	the reformulation can be made based on either the cell-centered solution values (cf.~\cite{LiXingWBDG2016}) or the cell average of the solution (cf.~\cite{LiXingWBFV2016}). 
	Our analysis indicates that the latter is advantageous for establishing the positivity-preserving property under a milder CFL condition; see Remark \ref{rem:cellave} for details. Besides, for the theoretical positivity-preserving considerations,  we also
	observe that the source term in the energy equation should be 
	discretized in a same fashion as in the momentum equations, which is not required for the well-balancedness consideration. 	
	\item 
	The Lax-Friedrichs (LF) flux is employed in \cite{LiXingWBDG2016,LiXingWBFV2016} and properly modified to be well-balanced 
	for two special equilibria (isothermal and polytropic equilibria), separately. 
	However, the modification in the polytropic case  
	makes it challenging, if not impossible, to prove the positivity-preserving property.
	In this paper, we will follow \cite{CK2015} and consider a Harten-Lax-van Leer-contact (HLLC) numerical flux, 
	which exactly resolves stationary contacts and has some significant advantages over the LF flux in the present study. 
	We will show in our framework that the HLLC flux can be properly modified, in a {\em unified} way, to be well-balanced 
	with our discrete source terms for an {\em arbitrary} hydrostatic equilibrium. 
	Moreover, it will be shown that such modification also retains the positivity-preserving property of 
	the HLLC flux and does not affect the high-order accuracy. 
	\item Based on some technical decompositions as well as several key properties of the admissible states and HLLC flux, 
	we will rigorously prove that the resulting well-balanced DG schemes satisfy a weak positivity property, which implies that 
	a simple existing limiter \cite{zhang2010b,wang2012robust} can effectively enforce the positivity-preserving property without losing high-order accuracy and conservation. The well-balanced modification of the numerical flux and discretization of source terms lead to additional difficulties in the positivity-preserving analyses, which are more complicated than 
	the analyses for the standard DG methods in \cite{zhang2010b,zhang2011}.
\end{enumerate}
It is also worth noting that, in the context of shallow water equations,
several positivity-preserving well-balanced schemes have been developed in the literature \cite{kurganov2007second,XZS2010,XS2011}. 
In that context, the positivity refers to the non-negativity of the water height. 
In the Euler equations \eqref{eq:dD}, the density is the analogue of the water height and is evolved only in the continuity equation, 
which makes it relatively easy to ensure its positivity. 
However, it is much more difficult to guarantee the positivity of pressure, since it depends nonlinearly on all the conservative variables $\{\rho,{\bf m},E\}$, as shown in \eqref{eq:IEOS}.  
More specifically, the pressure (internal energy) is computed by subtracting the kinetic energy ${ \| {\bf m} \|^2 }/{(2\rho)}$ from the total energy $E$.
For high Mach flows or very cold flows, when the numerical errors in $E$ and ${ \| {\bf m} \|^2 }/{(2\rho)}$ are large enough, negative pressure can be produced easily. 
Since the conservative quantities $\{\rho,{\bf m},E\}$ are evolved according to their own conservation laws which are seemingly unrelated, the positivity of pressure is not easy to guarantee numerically. 
In theory, it is indeed a challenge to make an a priori judgment on whether a numerical scheme is always positivity-preserving under all circumstances or not. 
For these reasons, seeking positivity-preserving well-balanced schemes for the Euler equations \eqref{eq:dD} with gravitation is quite nontrivial and cannot directly follow any existing frameworks on shallow water equations.  

The rest of this paper is organized as follows. In Section \ref{sec:setup}, we will introduce the stationary hydrostatic solutions 
of \eqref{eq:dD} and present several useful properties of the admissible state set and the HLLC flux. 
We first construct the positivity-preserving well-balanced DG schemes for the one-dimensional system in 
Section \ref{sec:method1D}, and then extend them to the multidimensional cases in Section \ref{sec:method2D}. 
We conduct numerical tests to verify the properties and effectiveness of the proposed schemes 
in Section \ref{sec:examples}, before concluding the paper in Section \ref{sec:conclusion}. 
The extensions of the proposed methods and analyses to general EOS are presented in 
Appendix \ref{app:gEOS}.  
For completeness of this work and comparison purpose,  
we also discuss in Appendix \ref{app:DGLF} the positivity of the well-balanced DG schemes with a modified LF flux for the isothermal case. 

\section{Auxiliary results} \label{sec:setup}
This section introduces the stationary hydrostatic solutions 
of \eqref{eq:dD} and presents several useful properties of the admissible state set and the HLLC flux. 


\subsection{Stationary hydrostatic solutions}\label{sec:steady}

Under the time-independent gravitation potential, the system \eqref{eq:dD} admit zero-velocity
stationary hydrostatic solutions of the form
\begin{equation}\label{eq:equi1}
\rho = \rho({\bf x}), \quad {\bf u}={\bf 0}, \quad {\bm \nabla} p = - \rho {\bm \nabla} \phi.
\end{equation}
Two important special equilibria arising in the applications are 
the polytropic \cite{KM2014} and isothermal \cite{XS2013} hydrostatic states.
For an isothermal hydrostatic state, we have $T({\bf x})\equiv T_0$, where $T$ denotes the temperature. For an ideal gas, it is given by 
\begin{equation*}
\rho = \rho_0 \exp \left( -\frac{\phi}{RT_0} \right),\qquad {\bf u}={\bf 0}, \qquad p = p_0 \exp \left( -\frac{\phi}{RT_0} \right),
\end{equation*}
where $R$ is the gas constant; $p_0$, $\rho_0$, and $T_0$ are positive constants satisfying 
$p_0=\rho_0 R T_0$.
A polytropic equilibrium is characterized by 
$p = K_0 p^\gamma,$ 
which leads to the form of 
\begin{equation*}
\rho = \left( \frac{\gamma-1}{ K_0 \gamma} ( C-\phi )   \right)^{\frac{1}{\gamma-1}}, \quad {\bf u}={\bf 0}, \quad  p = \frac{1}{ K_0^{\frac{1}{\gamma-1} } }   \left( \frac{\gamma-1}{\gamma} ( C-\phi )   \right)^{\frac{\gamma}{\gamma-1}},
\end{equation*}
where $K_0$ and $C$ are both constant. 

\subsection{Properties of admissible states}
In physics, the density $\rho$ and the pressure $p$ are both positive, which is equivalent to the description that 
the conservative vector ${\bf U}$ should stay in the set of physically admissible states, defined by
\begin{equation}\label{eq:DefG}
{G} := \left\{   {\bf U} = (\rho,{\bf m},E)^\top:~ \rho > 0,~
{\mathcal G}(  {\bf U}  ) :=  E-  \frac{\|{\bf m}\|^2}{2 \rho}   > 0 \right\},
\end{equation}
where ${\mathcal G}({\bf U})$ is a concave function of ${\bf U}$ if $\rho \ge 0$. 
It is easy to show that the admissible state set $G$ satisfies the following properties, which will be useful in our positivity-preserving analysis.  

\begin{lemma}[Convexity]\label{lem1}
	The set $G$ is a convex set. Moreover, $\lambda {\bf U}_1 + (1-\lambda) {\bf U}_0  \in G$ for any ${\bf U}_1 \in G, {\bf U}_0 \in \overline G$ and $\lambda \in (0,1]$, where 
	$\overline G$ is the closure of $G$.
\end{lemma}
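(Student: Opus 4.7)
The plan is to prove the stronger ``moreover'' statement directly, since the bare convexity of $G$ is recovered from it by restricting to ${\bf U}_0 \in G \subset \overline G$. So I would fix ${\bf U}_1 \in G$, ${\bf U}_0 \in \overline G$, and $\lambda \in (0,1]$, set ${\bf U}_\lambda := \lambda {\bf U}_1 + (1-\lambda){\bf U}_0$ with components $(\rho_\lambda, {\bf m}_\lambda, E_\lambda)$, and verify the two defining inequalities of $G$ in turn.

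The density component is immediate from the linearity of ${\bf U} \mapsto \rho$: since $\rho_1 > 0$, $\rho_0 \geq 0$, and $\lambda > 0$, the convex combination satisfies $\rho_\lambda = \lambda \rho_1 + (1-\lambda)\rho_0 > 0$. For ${\mathcal G}({\bf U}_\lambda)$, I would invoke the concavity of ${\mathcal G}$ on the half-space $\{\rho \geq 0\}$ already asserted just before the lemma, and conclude from
\begin{equation*}
{\mathcal G}({\bf U}_\lambda) \geq \lambda \, {\mathcal G}({\bf U}_1) + (1-\lambda) \, {\mathcal G}({\bf U}_0) > 0,
\end{equation*}
using ${\mathcal G}({\bf U}_1) > 0$, ${\mathcal G}({\bf U}_0) \geq 0$, and $\lambda > 0$.

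The only substantive ingredient is therefore the concavity assertion itself, which I expect to be the main (and essentially only) obstacle. The cleanest route is to recognize $\|{\bf m}\|^2/(2\rho)$ as a positive multiple of the perspective of the convex function ${\bf m} \mapsto \|{\bf m}\|^2$, and hence convex on $\{\rho > 0\}$; equivalently, a direct Hessian computation (or a Schur complement argument) shows that the Hessian of $\|{\bf m}\|^2/(2\rho)$ in the variables $(\rho, {\bf m})$ is positive semidefinite. Subtracting the linear-in-${\bf U}$ term $E$ preserves concavity, giving concavity of ${\mathcal G}$.

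A brief check is required for the boundary case $\rho_0 = 0$ in $\overline G$: any such ${\bf U}_0$ must also have ${\bf m}_0 = {\bf 0}$ and $E_0 \geq 0$, since otherwise a sequence in $G$ approaching ${\bf U}_0$ would fail ${\mathcal G} > 0$ in the limit. On this portion of the boundary ${\mathcal G}$ extends continuously to the value $E_0 \geq 0$, the concavity inequality above still applies, and the argument goes through because $\rho_\lambda > 0$ places ${\bf U}_\lambda$ in the interior where the original expression for ${\mathcal G}$ is well defined.
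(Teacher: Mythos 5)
Your proposal is correct and follows the same route the paper indicates: positivity of the density component by linearity, plus concavity of ${\mathcal G}$ and Jensen's inequality for the internal-energy constraint (the paper simply cites this argument from \cite{zhang2010b} rather than writing it out). Your additional justification of the concavity via the perspective-function/Hessian argument and your treatment of the boundary case $\rho_0=0$ (forcing ${\bf m}_0={\bf 0}$, $E_0\ge 0$) correctly fill in the details the paper leaves implicit.
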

This property can be verified by definition and Jensen’s inequality; see \cite{zhang2010b}. 

\begin{lemma}[Scale invariance]  \label{lem2}
	If ${\bf U} \in G$, for any $\lambda > 0$, it holds $\lambda {\bf U} \in G$. 
\end{lemma}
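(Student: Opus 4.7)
The plan is to verify the two defining conditions of $G$ directly for $\lambda \mathbf{U}$, using only the positivity of $\lambda$ and the homogeneity of the function $\mathcal{G}$. Since $G$ is cut out by the simple inequalities $\rho>0$ and $\mathcal{G}(\mathbf{U})>0$, there is no need for any geometric or convexity argument here; a short componentwise computation should suffice.

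First I would write $\lambda \mathbf{U} = (\lambda \rho, \lambda \mathbf{m}, \lambda E)^\top$ and immediately conclude that its density $\lambda \rho$ is positive from $\lambda>0$ and $\rho>0$. Next I would evaluate
\begin{equation*}
\mathcal{G}(\lambda \mathbf{U}) = \lambda E - \frac{\|\lambda \mathbf{m}\|^2}{2\,\lambda \rho} = \lambda E - \frac{\lambda\,\|\mathbf{m}\|^2}{2\rho} = \lambda\, \mathcal{G}(\mathbf{U}),
\end{equation*}
which shows that $\mathcal{G}$ is positively homogeneous of degree one. Since $\mathbf{U}\in G$ gives $\mathcal{G}(\mathbf{U})>0$ and $\lambda>0$, we obtain $\mathcal{G}(\lambda \mathbf{U})>0$. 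Both defining conditions of $G$ are satisfied, so $\lambda \mathbf{U}\in G$.

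There is really no obstacle to speak of in this argument; the entire content is the cancellation of one factor of $\lambda$ between the quadratic numerator $\|\lambda \mathbf{m}\|^2$ and the linear denominator $\lambda \rho$ in the kinetic-energy term. If anything, the only thing to be careful about is to observe that $\lambda>0$ is needed both for the density positivity and to preserve the sign in $\mathcal{G}(\lambda \mathbf{U}) = \lambda\,\mathcal{G}(\mathbf{U})$; the result would fail for $\lambda\le 0$. This scale-invariance will later be useful (together with Lemma~2.1) in the positivity-preserving analysis, where convex combinations of scaled admissible states naturally arise from the CFL-type decompositions of the DG updates.
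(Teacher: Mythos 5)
Your proof is correct and is exactly the "straightforward" verification the paper has in mind (the paper omits the details entirely): density scales to $\lambda\rho>0$ and $\mathcal{G}$ is positively homogeneous of degree one, so $\mathcal{G}(\lambda\mathbf{U})=\lambda\,\mathcal{G}(\mathbf{U})>0$. Nothing further is needed.
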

The proof is straightforward.
Combining Lemmas \ref{lem1} and \ref{lem2}, we immediately obtain the following stronger property. 
\begin{lemma}\label{lem3}
	For any $\lambda_1 >0$, $\lambda_0 \ge 0$, ${\bf U}_1 \in G$ and $ {\bf U}_0 \in \overline G$, we have 
	$\widehat {\bf U}:= \lambda_1 {\bf U}_1 + \lambda_0 {\bf U}_0 \in  G$.
\end{lemma}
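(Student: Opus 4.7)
The plan is to reduce the statement to a direct application of Lemmas 2.1 and 2.2 by a simple rescaling trick. Since $\lambda_1 > 0$ and $\lambda_0 \geq 0$, the sum $\lambda_1 + \lambda_0$ is strictly positive, so I can factor it out and convert the nonnegative linear combination into a convex combination.

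First I would dispose of the trivial case $\lambda_0 = 0$: here $\widehat{\bf U} = \lambda_1 {\bf U}_1$, and Lemma 2.2 (scale invariance) applied to ${\bf U}_1 \in G$ with scaling factor $\lambda_1 > 0$ gives $\widehat{\bf U} \in G$ immediately.

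Next, for the case $\lambda_0 > 0$, I would write
\[
\widehat{\bf U} = (\lambda_1 + \lambda_0)\Bigl[ \lambda\, {\bf U}_1 + (1 - \lambda)\, {\bf U}_0 \Bigr], \qquad \lambda := \frac{\lambda_1}{\lambda_1 + \lambda_0} \in (0, 1].
\]
Since ${\bf U}_1 \in G$, ${\bf U}_0 \in \overline{G}$, and $\lambda \in (0,1]$, Lemma 2.1 guarantees that the bracketed quantity $\lambda\,{\bf U}_1 + (1-\lambda)\,{\bf U}_0$ belongs to $G$. Then, applying Lemma 2.2 with the positive scaling factor $\lambda_1 + \lambda_0 > 0$ yields $\widehat{\bf U} \in G$, completing the proof.

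There is essentially no obstacle here: the argument is just a bookkeeping step combining convexity with positive scaling. The only thing to be slightly careful about is the edge case $\lambda_0 = 0$ (where the convex-combination rewrite is degenerate and Lemma 2.1 is not needed), and the observation that Lemma 2.1 is stated precisely to allow ${\bf U}_0$ to lie on the boundary $\overline{G} \setminus G$, which is exactly what we need here.
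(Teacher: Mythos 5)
Your proof is correct and follows essentially the same route as the paper: write $\widehat{\bf U}=(\lambda_1+\lambda_0)\bigl[\lambda{\bf U}_1+(1-\lambda){\bf U}_0\bigr]$ with $\lambda=\lambda_1/(\lambda_1+\lambda_0)$, apply Lemma \ref{lem1}, then Lemma \ref{lem2}. The separate treatment of $\lambda_0=0$ is harmless but unnecessary, since Lemma \ref{lem1} already covers $\lambda=1$.
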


\begin{proof}
	Let $\lambda : = \frac{\lambda_1}{\lambda_1 + \lambda_0}  \in (0,1] $. It follows from 
	Lemma \ref{lem1} that 
	$\lambda {\bf U}_1 + (1-\lambda) {\bf U}_0  \in G$. 
	Thus, we have $\widehat {\bf U} = (\lambda_1 + \lambda_0) (\lambda {\bf U}_1 + (1-\lambda) {\bf U}_0) \in G $, according to 
	Lemma \ref{lem2}. 
\end{proof}

\begin{lemma}\label{lem:UcontrolS}
	For any $\lambda \ge0$, $\delta \in \mathbb R$, ${\bf U}=(\rho,{\bf m},E)^\top \in G$, and ${\bf a} \in \mathbb R^d$, if $|\delta| \frac{ \| {\bf a} \| } { \sqrt{2 e} } \le \lambda$, then   
	$$
	\widehat {\bf U}:=\lambda {\bf U} + \delta \big(0,\rho {\bf a}, {\bf m} \cdot {\bf a} \big)^\top
	\in \overline G.
	$$
\end{lemma}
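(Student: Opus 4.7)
The plan is to verify the two defining conditions of $\overline G$ directly on the components of $\widehat{\bf U}=(\hat\rho,\hat{\bf m},\hat E)^\top$, where $\hat\rho=\lambda\rho$, $\hat{\bf m}=\lambda{\bf m}+\delta\rho\,{\bf a}$, and $\hat E=\lambda E+\delta\,{\bf m}\cdot{\bf a}$. Non-negativity of $\hat\rho$ is immediate since $\lambda\ge 0$ and $\rho>0$. The main work is to show ${\mathcal G}(\widehat{\bf U})=\hat E-\|\hat{\bf m}\|^2/(2\hat\rho)\ge 0$, which I would prove by computing $2\hat\rho\hat E-\|\hat{\bf m}\|^2$ explicitly and observing that the cross terms linear in $\delta$ cancel.

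First I would dispose of the degenerate case $\lambda=0$. The hypothesis $|\delta|\,\|{\bf a}\|/\sqrt{2e}\le\lambda=0$ forces $\delta\,{\bf a}={\bf 0}$, so $\widehat{\bf U}={\bf 0}\in\overline G$. For the nontrivial case $\lambda>0$, we have $\hat\rho>0$, and it suffices to check $2\hat\rho\hat E\ge\|\hat{\bf m}\|^2$. Expanding,
\begin{align*}
2\hat\rho\hat E-\|\hat{\bf m}\|^2
&=2\lambda\rho\bigl(\lambda E+\delta\,{\bf m}\cdot{\bf a}\bigr)-\bigl\|\lambda{\bf m}+\delta\rho\,{\bf a}\bigr\|^2\\
&=\lambda^2\bigl(2\rho E-\|{\bf m}\|^2\bigr)+2\lambda\delta\rho\,{\bf m}\cdot{\bf a}-2\lambda\delta\rho\,{\bf m}\cdot{\bf a}-\delta^2\rho^2\|{\bf a}\|^2\\
&=\lambda^2\bigl(2\rho E-\|{\bf m}\|^2\bigr)-\delta^2\rho^2\|{\bf a}\|^2.
\end{align*}

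Next I would use the identity $2\rho E-\|{\bf m}\|^2=2\rho^2 e$, valid since ${\bf U}\in G$, to rewrite the bound as $\rho^2\bigl(2\lambda^2 e-\delta^2\|{\bf a}\|^2\bigr)$. The assumption $|\delta|\,\|{\bf a}\|\le \lambda\sqrt{2e}$ is exactly what is needed to make $2\lambda^2 e-\delta^2\|{\bf a}\|^2\ge 0$, and then ${\mathcal G}(\widehat{\bf U})\ge 0$, i.e., $\widehat{\bf U}\in\overline G$.

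The argument is essentially a short algebraic identity followed by a Cauchy–Schwarz-style bound, so there is no real obstacle; the only subtle point is recognizing that the cross terms $\pm 2\lambda\delta\rho\,{\bf m}\cdot{\bf a}$ cancel exactly because the source perturbation $(0,\rho{\bf a},{\bf m}\cdot{\bf a})^\top$ is designed so that $2\rho\cdot({\bf m}\cdot{\bf a})=2{\bf m}\cdot(\rho{\bf a})$. This cancellation, together with the rewriting $2\rho E-\|{\bf m}\|^2=2\rho^2 e$ that converts total energy information into specific internal energy, is what makes the threshold $|\delta|\|{\bf a}\|/\sqrt{2e}\le\lambda$ sharp.
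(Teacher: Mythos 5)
Your proof is correct and follows essentially the same route as the paper's: both handle $\lambda=0$ by noting the hypothesis forces the perturbation to vanish, and for $\lambda>0$ both expand the quadratic form, observe the exact cancellation of the cross terms linear in $\delta$, and invoke the identity $2\rho E-\|{\bf m}\|^2=2\rho^2 e$ to reduce the condition to the stated threshold. The only cosmetic difference is that you clear the denominator and bound $2\hat\rho\hat E-\|\hat{\bf m}\|^2$, whereas the paper presents ${\mathcal G}(\widehat{\bf U})$ directly in factored form.
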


\begin{proof}
	If $\lambda =0$, it then follows from $|\delta| { \| {\bf a} \| } /{ \sqrt{2 e} } \le \lambda$ that $\delta =0$ or ${\bf a}={\bf 0}$, which implies 
	$ \widehat {\bf U} = {\bf 0} \in \overline {G} $. If $\lambda >0$, the first component of $\widehat {\bf U}$ equals $\lambda \rho >0$, and $\widehat{\bf U} = ( \lambda \rho, \lambda {\bf m}
	+ \delta \rho {\bf a}, \lambda E + \delta {\bf m} \cdot {\bf a} )^\top$ satisfies 
	\begin{align*}
	{\mathcal G}(\widehat{\bf U})  = \lambda E + \delta {\bf m} \cdot {\bf a} - 
	\frac{ \| \lambda {\bf m} 
		+ \delta \rho {\bf a} \|^2 }{ 2 \lambda \rho } 
	= \rho e \left( 1 + |\delta| \frac{ \| {\bf a} \| }{ \lambda \sqrt{ 2 e } }  \right) 
	\left( \lambda - |\delta| \frac{ \| {\bf a} \| }{ \sqrt{ 2 e } }  \right) \ge 0,
	\end{align*} 
	where the last inequality follows from the condition $|\delta| { \| {\bf a} \| }/ { \sqrt{2 e} } \le \lambda$. 
	Therefore, $\widehat {\bf U} \in \overline G$. 
\end{proof}

\begin{lemma}\label{lem:LFflux}
	For any ${\bf U}\in G$ and any unit vector ${\bf n} \in \mathbb R^d$, we have 
	$
	{\bf U} - \lambda {\bf F} ( {\bf U} ) \cdot {\bf n} \in G,
	$
	for any $\lambda \in \mathbb R$ satisfying 
	$ |\lambda| \alpha_{\bf n} ({\bf U}  ) \le 1$, where $\alpha_{\bf n} ({\bf U}  ) := |{\bf u}\cdot {\bf n}|+\sqrt{\gamma p/\rho}.$
\end{lemma}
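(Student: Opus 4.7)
The plan is to verify both defining inequalities of $G$ directly for $\widehat{\mathbf U}:=\mathbf U-\lambda\mathbf F(\mathbf U)\cdot\mathbf n$ by a short algebraic computation, in the spirit of the classical Perthame--Shu positivity argument. I will use the shorthand $u_n:=\mathbf u\cdot\mathbf n$, $c:=\sqrt{\gamma p/\rho}$, and $\mu:=1-\lambda u_n$. Plugging in $\mathbf F(\mathbf U)\cdot\mathbf n=(\rho u_n,\;\rho\mathbf u\,u_n+p\mathbf n,\;(E+p)u_n)^\top$ gives the compact components $\widehat\rho=\rho\mu$, $\widehat{\mathbf m}=\rho\mathbf u\,\mu-\lambda p\mathbf n$, and $\widehat E=E\mu-\lambda p u_n$. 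From the CFL-type hypothesis one has $|\lambda u_n|\le 1-|\lambda|c$, hence $\mu\ge 1-|\lambda u_n|\ge|\lambda|c\ge 0$, with $\mu>0$ whenever $\lambda\neq 0$ (and trivially when $\lambda=0$). Combined with $\rho>0$, this yields $\widehat\rho>0$.

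The heart of the argument is the computation of $\mathcal G(\widehat{\mathbf U})$. Expanding
\begin{equation*}
\|\widehat{\mathbf m}\|^2=\rho^2\mu^2\|\mathbf u\|^2-2\rho\mu\lambda p u_n+\lambda^2 p^2, \qquad 2\widehat\rho\widehat E=2\rho\mu^2 E-2\rho\mu\lambda p u_n,
\end{equation*}
the middle cross terms $\pm 2\rho\mu\lambda p u_n$ cancel, and using $2E-\rho\|\mathbf u\|^2=2\rho e$ one arrives at the clean identity
\begin{equation*}
2\widehat\rho\,\mathcal G(\widehat{\mathbf U}) \;=\; 2\widehat\rho\widehat E-\|\widehat{\mathbf m}\|^2 \;=\; 2\rho^2 e\,\mu^2-\lambda^2 p^2.
\end{equation*}
This cancellation-driven identity is really the only bookkeeping step in the proof, and is where I expect the mild obstacle (keeping track of signs and cross terms) to lie.

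It then suffices to show the right-hand side is strictly positive. Using the ideal-gas relation $p=(\gamma-1)\rho e$, the ratio $\lambda^2 p^2/(2\rho^2 e)$ rewrites as $\tfrac{\gamma-1}{2\gamma}\lambda^2 c^2$, so the target inequality reduces to $\mu^2>\tfrac{\gamma-1}{2\gamma}\lambda^2 c^2$. The CFL bound already delivered $\mu\ge|\lambda|c$, hence $\mu^2\ge\lambda^2 c^2$; combining this with $\tfrac{\gamma-1}{2\gamma}<1$ (valid for all $\gamma>1$) yields the required strict inequality whenever $\lambda\neq 0$, while the case $\lambda=0$ is immediate since $\widehat{\mathbf U}=\mathbf U\in G$. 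Therefore $\mathcal G(\widehat{\mathbf U})>0$ and, together with $\widehat\rho>0$, we obtain $\widehat{\mathbf U}\in G$, as claimed.
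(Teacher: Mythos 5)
Your proof is correct. The paper does not prove this lemma itself but defers to the cited references (Zhang--Shu and follow-ups), and your direct verification is essentially the standard argument found there: all the component formulas check out, the cross terms in $2\widehat\rho\widehat E-\|\widehat{\mathbf m}\|^2$ do cancel to give $2\rho^2 e\,\mu^2-\lambda^2p^2$, and the final step correctly exploits $\mu\ge|\lambda|c$ together with $\tfrac{\gamma-1}{2\gamma}<1$ (and $c>0$, which follows from ${\bf U}\in G$) to get strict positivity for $\lambda\neq 0$.
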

The proof of Lemma \ref{lem:LFflux} can be found in, for example, \cite{zhang2010b,ZHANG2017301}.

%

\subsection{Properties of HLLC flux in one dimension}
In this subsection, we introduce several important properties of the HLLC numerical flux, 
whose properly modified version will be a key ingredient of our numerical schemes presented later. 
For notational convenience, we here focus on the properties of the HLLC flux in the
one-dimensional (1D) case ($d=1$), while the multidimensional extensions will be discussed in Section \ref{sec:HLLC2D}. 

In the 1D case, 
the HLLC flux (see, for example, \cite{BattenSISC-HLLCPP,toro2013riemann}) is defined by 
\begin{equation}\label{HLLC}
{\bf F}^{hllc} ({\bf U}_L,{\bf U}_R) = \begin{cases}
{\bf F}({\bf U}_L), \quad & \mbox{if~~} 0 \le S_L,
\\
{\bf F}_{*L}, \quad & \mbox{if~~} S_L \le 0 \le S_*,
\\
{\bf F}_{*R}, \quad & \mbox{if~~} S_* \le 0 \le S_R,
\\
{\bf F} ( {\bf U}_R ), \quad & \mbox{if~~} 0 \ge S_R,
\end{cases}	
\end{equation}
where $S_L$ and $S_R$ are the estimated (left and right)
fastest signal velocities arising from the solution of the Riemann problem, and 
the middle wave speed $S_*$ and fluxes are given by 
\begin{align*}
&S_* = \frac{ p_R-p_L +\rho_L u_L ( S_L - u_L ) - \rho_R u_R ( S_R - u_R ) }
{ \rho_L (S_L - u_L) -\rho_R ( S_R - u_R ) }, \qquad
{\bf F}_{*i} = {\bf F}_i+ S_i ( {\bf U}_{*i} - {\bf U}_i ), ~~~ i=L,\,R,
\end{align*} 
with the intermediate states given by
\begin{equation}\label{US}
{\bf U}_{*i} = \rho_i \left( \frac{S_i-u_i}{S_i-S_*} \right) 
\begin{pmatrix}
1
\\
S_*
\\
\frac{E_i}{\rho_i} + ( S_* - u_i ) \left( S_* + \frac{ p_i }{ \rho_i ( S_i - u_i ) }  \right)
\end{pmatrix}.
\end{equation}
With $\alpha_\pm = u \pm \sqrt{\gamma p/\rho} $, the following estimates of $S_L$ and $S_R$ are used in our computation.
\begin{equation}\label{SLSR}
S_L = \min \{ \alpha_- ( {\bf U}_L ) , \alpha_- ( {\bf U}_R )   \}, \qquad 
S_R = \max \{ \alpha_+ ( {\bf U}_L ) , \alpha_+ ( {\bf U}_R )   \}.
\end{equation}

The HLLC flux possesses two important properties, namely the contact property (see, e.g., \cite{CK2015}) and the positivity \cite{BattenSISC-HLLCPP}, as outlined below.

\begin{lemma}\label{lem:HLLCcontact}
	For any two states ${\bf U}_L =( \rho_L, 0, p/(\gamma-1) )^\top $ 
	and ${\bf U}_R =( \rho_R, 0, p/(\gamma-1) )^\top $, the HLLC flux \eqref{HLLC} satisfies 
	$$ 
	{\bf F}^{hllc} ({\bf U}_L,{\bf U}_R) = ( 0, p, 0 )^\top.
	$$
\end{lemma}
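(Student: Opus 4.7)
The plan is a direct computation exploiting the special structure of the two states. Both have zero velocity and an equal pressure $p$, so the Riemann problem with these data is a stationary contact discontinuity, and I expect each ingredient of \eqref{HLLC} to collapse to a trivial value.

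First I would evaluate the wave speed estimates \eqref{SLSR}. Since $u_L=u_R=0$, we have $\alpha_\pm({\bf U}_L)=\pm\sqrt{\gamma p/\rho_L}$ and $\alpha_\pm({\bf U}_R)=\pm\sqrt{\gamma p/\rho_R}$, hence
\begin{equation*}
S_L=-\max\!\left\{\sqrt{\gamma p/\rho_L},\sqrt{\gamma p/\rho_R}\right\}<0,\qquad S_R=\max\!\left\{\sqrt{\gamma p/\rho_L},\sqrt{\gamma p/\rho_R}\right\}>0.
\end{equation*}
Next I would plug $u_L=u_R=0$ and $p_L=p_R=p$ into the formula for $S_*$: the numerator vanishes identically, so $S_*=0$. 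Thus the interface sits exactly at the middle wave, $S_L\le 0=S_*\le S_R$, and either of the two intermediate cases of \eqref{HLLC} applies.

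Then I would compute ${\bf U}_{*L}$ via \eqref{US}. With $u_L=0$ and $S_*=0$, the prefactor $(S_L-u_L)/(S_L-S_*)=S_L/S_L=1$, the second component is $\rho_L S_*=0$, and the third component reduces to $E_L$ because the factor $(S_*-u_L)$ multiplying the bracket is zero. Hence ${\bf U}_{*L}={\bf U}_L$, so ${\bf F}_{*L}={\bf F}({\bf U}_L)+S_L({\bf U}_{*L}-{\bf U}_L)={\bf F}({\bf U}_L)$. A direct substitution of $u_L=0$, $p_L=p$ into ${\bf F}({\bf U}_L)=(\rho_L u_L,\rho_L u_L^2+p_L,(E_L+p_L)u_L)^\top$ gives $(0,p,0)^\top$, which is the claimed value. (For completeness one can also check the other branch: the same reasoning yields ${\bf U}_{*R}={\bf U}_R$ and ${\bf F}_{*R}={\bf F}({\bf U}_R)=(0,p,0)^\top$, so both branches agree at $S_*=0$.)

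There is no real obstacle here; the argument is essentially a bookkeeping exercise. The only point requiring a tiny bit of care is checking that $S_L-S_*\ne 0$ and $S_R-S_*\ne 0$ so that \eqref{US} is well defined, which is immediate from $S_L<0<S_R$ together with $p>0$ ensuring nondegenerate sound speeds.
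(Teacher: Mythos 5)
Your computation is correct and is exactly the direct verification the paper has in mind: the paper states only that ``the proof is straightforward'' and omits the details, and your argument (showing $S_*=0$, ${\bf U}_{*L}={\bf U}_L$, ${\bf U}_{*R}={\bf U}_R$, hence ${\bf F}_{*L}={\bf F}_{*R}={\bf F}({\bf U}_L)=(0,p,0)^\top$) is the natural way to fill them in. Your closing remark about the nondegeneracy $S_L<0<S_R$, which guarantees both the denominator of $S_*$ and the factors $S_i-S_*$ are nonzero, is the right point of care.
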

The proof is straightforward. The importance of this property for the well-balancedness was observed and used in \cite{CK2015}.

\begin{lemma}\label{lem:HLLCpp}
	For any two admissible states ${\bf U}_L \in G$ and ${\bf U}_R \in G$, the intermediate states defined in \eqref{US} satisfy
	$$
	{\bf U}_{*L} \in G, \quad  {\bf U}_{*R} \in G.
	$$ 
\end{lemma}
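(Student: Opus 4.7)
The plan is to verify the two defining conditions of $G$ for each intermediate state ${\bf U}_{*i}$, $i\in\{L,R\}$: strict positivity of density and of internal energy. As preparation, I would use the wave speed estimates \eqref{SLSR} to record $S_L \le u_L - c_L < u_L$ and $S_R \ge u_R + c_R > u_R$, where $c_i := \sqrt{\gamma p_i/\rho_i}$. These give the sign information $\rho_L(S_L-u_L) < 0 < \rho_R(S_R-u_R)$ together with the crucial magnitude bound $(S_i-u_i)^2 \ge \gamma p_i/\rho_i$. A short direct calculation from the defining formula of $S_*$, using the sign information on the denominator, then yields the standard ordering $S_L \le S_* \le S_R$.

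With these in hand, density positivity is immediate: $\rho_{*i} = \rho_i (S_i - u_i)/(S_i - S_*)$ has numerator and denominator of the same sign---both negative for $i=L$, both positive for $i=R$---so $\rho_{*i} > 0$.

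For the internal energy, starting from \eqref{US} and using $m_{*i}/\rho_{*i} = S_*$ together with $E_i/\rho_i = e_i + u_i^2/2$, a direct rearrangement gives
\[
\frac{{\mathcal G}({\bf U}_{*i})}{\rho_{*i}} = e_i + \frac{(S_*-u_i)^2}{2} + \frac{(S_*-u_i)\,p_i}{\rho_i(S_i-u_i)}.
\]
Next, invoking the Rankine--Hugoniot-type relation $\rho_i(S_i-u_i)(S_*-u_i) = p_* - p_i$ that is implicit in the definition of $S_*$ (so that the left- and right-state star pressures coincide at a common value $p_*$), I would substitute $S_*-u_i = (p_*-p_i)/[\rho_i(S_i-u_i)]$ to collapse the identity to
\[
\frac{{\mathcal G}({\bf U}_{*i})}{\rho_{*i}} = e_i + \frac{p_*^2 - p_i^2}{2\,\rho_i^2 (S_i-u_i)^2}.
\]

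The main obstacle I anticipate is establishing strict positivity of this last expression, since the sign of $p_*^2 - p_i^2$ is not controlled a priori. Here the sound-speed bound $(S_i-u_i)^2 \ge \gamma p_i/\rho_i$ recorded in the first step becomes decisive: combined with the ideal-gas identity $e_i = p_i/[(\gamma-1)\rho_i]$, it yields
\[
2\rho_i^2(S_i-u_i)^2 \cdot \frac{{\mathcal G}({\bf U}_{*i})}{\rho_{*i}} \ge \frac{2\gamma}{\gamma-1}\,p_i^2 + p_*^2 - p_i^2 = \frac{\gamma+1}{\gamma-1}\,p_i^2 + p_*^2 > 0,
\]
where strict positivity uses $p_i>0$. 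Combined with $\rho_{*i}>0$, this proves ${\bf U}_{*i} \in G$. The entire argument thus hinges on the specific choice \eqref{SLSR} of $S_L$ and $S_R$, which is precisely what makes the HLLC intermediate states admissible.
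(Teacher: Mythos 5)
The paper does not actually prove this lemma: it defers entirely to Section 5.3 of the cited Batten--Clarke--Lambert--Causon reference, so your proposal supplies the argument the paper omits. Your proof is correct and is essentially that standard argument: the wavespeed choices \eqref{SLSR} give $S_L\le u_L-c_L$ and $S_R\ge u_R+c_R$, from which the strict ordering $S_L<S_*<S_R$ (and hence $\rho_{*i}>0$) follows, and the common star pressure identity $\rho_i(S_i-u_i)(S_*-u_i)=p_*-p_i$ is indeed an algebraic consequence of the formula for $S_*$, collapsing ${\mathcal G}({\bf U}_{*i})/\rho_{*i}$ to $e_i+(p_*^2-p_i^2)/\bigl(2\rho_i^2(S_i-u_i)^2\bigr)$, which the bound $(S_i-u_i)^2\ge\gamma p_i/\rho_i$ together with $e_i=p_i/\bigl((\gamma-1)\rho_i\bigr)$ makes strictly positive. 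I verified each of these steps; the argument is complete as stated.
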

The proof of this property for the Euler equations can be found in \cite[Section 5.3]{BattenSISC-HLLCPP}. 
As a direct consequence of Lemma \ref{lem:HLLCpp}, we have the following conclusions, 
which are relevant to the positivity of 
the HLLC scheme for the 1D Euler equations without gravitation. 

\begin{lemma}\label{HLLC_std_euler:old00}
	For any two admissible states ${\bf U}_0 , {\bf U}_1 \in G$, 
	one has 
	\begin{align}\label{eq:www11}
	&{\bf U}_\lambda^{(1)} : =  {\bf U}_1 - \lambda \left(   {\bf F} ({\bf U}_1)  - {\bf F}^{hllc} ({\bf U}_0,{\bf U}_1) \right) \in G, 
	\\ \label{eq:www22}
	& {\bf U}_\lambda^{(0)} : =  {\bf U}_0 - \lambda \left(  
	{\bf F}^{hllc} ({\bf U}_0,{\bf U}_1) - 
	{\bf F} ({\bf U}_0)   \right) \in G, 
	\end{align}
	if $\lambda > 0$ and satisfies 
	\begin{equation}\label{HLLCcfl22}
	\lambda  \max_{ {\bf U} \in \{  {\bf U}_0 , {\bf U}_1 \}  } \alpha_{\max} ({\bf U}  ) \le 1,
	\end{equation}	
	where 
	$$
	\alpha_{\max} ({\bf U}  ) := |u|+\sqrt{\gamma p/\rho} = \max\{ |	\alpha_- ({\bf U}  ) |, |	\alpha_+ ({\bf U}  ) |  \}.
	$$
\end{lemma}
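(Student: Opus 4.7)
The plan is to verify both \eqref{eq:www11} and \eqref{eq:www22} by expressing each update as a non-negative linear combination of the four states $\{{\bf U}_0, {\bf U}_{*L}, {\bf U}_{*R}, {\bf U}_1\}$ whose coefficients sum to one, and then invoking Lemma \ref{lem1} (convexity of $G$) together with Lemma \ref{lem:HLLCpp} (which places ${\bf U}_{*L}$ and ${\bf U}_{*R}$ in $G$).

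The key algebraic ingredient is the pair of wave-jump identities ${\bf F}_{*L} - {\bf F}({\bf U}_0) = S_L ({\bf U}_{*L} - {\bf U}_0)$ and ${\bf F}({\bf U}_1) - {\bf F}_{*R} = S_R ({\bf U}_1 - {\bf U}_{*R})$, which are immediate from the definition ${\bf F}_{*i} = {\bf F}_i + S_i ({\bf U}_{*i} - {\bf U}_i)$, together with the middle-wave identity ${\bf F}_{*R} - {\bf F}_{*L} = S_* ({\bf U}_{*R} - {\bf U}_{*L})$, which is the standard HLLC consistency relation obtained by a short algebraic verification from \eqref{US}. With these three relations in hand I would carry out a four-way case analysis on the signs of $(S_L, S_*, S_R)$ relative to zero. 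As a representative case, when $S_L \le 0 \le S_* \le S_R$ one has ${\bf F}^{hllc} = {\bf F}_{*L}$, and chaining the latter two identities gives
\[
{\bf U}_\lambda^{(1)} = {\bf U}_1 - \lambda S_* ({\bf U}_{*R} - {\bf U}_{*L}) - \lambda S_R ({\bf U}_1 - {\bf U}_{*R}) = \lambda S_* \, {\bf U}_{*L} + \lambda (S_R - S_*) \, {\bf U}_{*R} + (1 - \lambda S_R) \, {\bf U}_1,
\]
whose coefficients sum to one and are each non-negative (using $0 \le S_* \le S_R$ and the CFL bound $\lambda S_R \le 1$). The other three sign regimes produce analogous convex combinations drawn from $\{{\bf U}_0, {\bf U}_{*L}, {\bf U}_{*R}, {\bf U}_1\}$, and the treatment of \eqref{eq:www22} is the mirror image, peeling off the jumps lying to the left of $x=0$ instead of to the right.

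The CFL condition \eqref{HLLCcfl22} is used in precisely one place per case, namely to ensure non-negativity of the extreme weights $1 + \lambda S_L$ arising in the decompositions for ${\bf U}_\lambda^{(0)}$ and $1 - \lambda S_R$ arising for ${\bf U}_\lambda^{(1)}$. These both reduce to $\lambda |S_L|, \lambda |S_R| \le 1$, which follow from \eqref{SLSR} via the elementary bound $|u \pm \sqrt{\gamma p/\rho}| \le |u| + \sqrt{\gamma p/\rho} = \alpha_{\max}({\bf U})$ applied at ${\bf U}_0$ and ${\bf U}_1$. All remaining coefficients are automatically non-negative thanks to the wave-speed ordering $S_L \le S_* \le S_R$. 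The main (though mild) obstacle is organizational: keeping the bookkeeping straight across the eight subcases (four sign regimes for each of the two identities) and verifying in each that the peeled-off wave jumps recombine into weights that really sum to unity.
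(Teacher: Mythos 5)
Your proof is correct and is essentially the paper's own argument in more explicit form: the paper represents ${\bf U}_\lambda^{(1)}$ (resp.\ ${\bf U}_\lambda^{(0)}$) as the average of the approximate HLLC Riemann solution over a suitable interval and invokes Lemma \ref{lem:HLLCpp} together with the convexity of $G$, and evaluating that integral region by region yields exactly the convex combinations of $\{{\bf U}_0,{\bf U}_{*L},{\bf U}_{*R},{\bf U}_1\}$ that you obtain from the wave-jump identities. The only step worth making explicit is the ordering $S_L\le S_*\le S_R$ that your sign analysis uses, which follows from the positivity of the intermediate densities in \eqref{US} (Lemma \ref{lem:HLLCpp}) together with $S_L<u_L$ and $S_R>u_R$ from \eqref{SLSR}.
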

\begin{proof}
	Let $
	S_1 :=  S_L( {\bf U}_0 , {\bf U}_1 ),
	$ which satisfies $\lambda | S_1|\le 1$. 
	According to the definition of the HLLC flux, we derive that 
	\begin{align*}
	{\bf U}_\lambda^{(1)}   = \int_0^{ \lambda \max \{  S_1, 0 \} 
	} 	{\mathcal R} ( x/\lambda, {\bf U}_0, {\bf U}_1 ) {\rm d} x
	+ ( 1 - \lambda \max \{  S_1, 0 \}  ) {\bf U}_1, 
	\end{align*}
	where ${\mathcal R} (x/t, {\bf U}_L, {\bf U}_R ) $ denotes the approximate HLLC solution to  the Riemann problem between the states 
	${\bf U}_L$ and ${\bf U}_R$, i.e., 
	$$
	{\mathcal R} (x/t, {\bf U}_L, {\bf U}_R ) = \begin{cases}
	{\bf U}_L, \quad & \mbox{if~~} \frac{x}{t} \le S_L,
	\\
	{\bf U}_{*L}, \quad & \mbox{if~~} S_L \le \frac{x}{t} \le S_*,
	\\
	{\bf U}_{*R}, \quad & \mbox{if~~} S_* \le \frac{x}{t} \le S_R,
	\\
	{\bf U}_R , \quad & \mbox{if~~} \frac{x}{t} \ge S_R.
	\end{cases}	
	$$
	Thanks to Lemma \ref{lem:HLLCpp}, we have 
	${\mathcal R} (x/t, {\bf U}_0, {\bf U}_1 ) \in G,$ for all $x \in \mathbb R$ and $t>0$. 
	The convexity of $G$ leads to
	${\bf U}_\lambda^{(1)} \in G $ under the condition \eqref{HLLCcfl22}. Similar argument yields ${\bf U}_\lambda^{(0)} \in G $. 
\end{proof}

\begin{lemma}\label{HLLC_std_euler:old}
	For any three admissible states ${\bf U}_L , {\bf U}_M, {\bf U}_R \in G$, 
	one has 
	$$
	{\bf U}_\lambda : =  {\bf U}_M - \lambda \left(   {\bf F}^{hllc} ({\bf U}_M,{\bf U}_R)  - {\bf F}^{hllc} ({\bf U}_L,{\bf U}_M) \right) \in G, 
	$$
	if $\lambda > 0$ satisfies 
	\begin{equation}\label{HLLCcfl}
	\lambda  \max_{ {\bf U} \in \{  {\bf U}_L , {\bf U}_M, {\bf U}_R \}  } \alpha_{\max} ({\bf U}  ) \le \frac1{2}.
	\end{equation}
\end{lemma}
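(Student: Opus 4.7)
The plan is to reduce Lemma \ref{HLLC_std_euler:old} to the previously established Lemma \ref{HLLC_std_euler:old00} via a convex-combination splitting, which is a standard trick for this kind of three-point stencil positivity result. The factor of $1/2$ in the CFL condition \eqref{HLLCcfl}, compared to \eqref{HLLCcfl22}, is exactly the signal that such a splitting is called for.

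Concretely, I would first verify the algebraic identity
\begin{equation*}
{\bf U}_\lambda \;=\; \tfrac{1}{2}\underbrace{\Big[{\bf U}_M - 2\lambda\big({\bf F}^{hllc}({\bf U}_M,{\bf U}_R)-{\bf F}({\bf U}_M)\big)\Big]}_{=:\,\widetilde{\bf U}^{(0)}} \;+\; \tfrac{1}{2}\underbrace{\Big[{\bf U}_M - 2\lambda\big({\bf F}({\bf U}_M)-{\bf F}^{hllc}({\bf U}_L,{\bf U}_M)\big)\Big]}_{=:\,\widetilde{\bf U}^{(1)}},
\end{equation*}
which follows directly from cancelling the $\pm 2\lambda{\bf F}({\bf U}_M)$ terms and the factors of $\tfrac{1}{2}$.

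Next, I would apply Lemma \ref{HLLC_std_euler:old00} twice, with $\lambda$ there taken to be $2\lambda$. The term $\widetilde{\bf U}^{(0)}$ has exactly the form \eqref{eq:www22} with $({\bf U}_0,{\bf U}_1)=({\bf U}_M,{\bf U}_R)$, while $\widetilde{\bf U}^{(1)}$ has the form \eqref{eq:www11} with $({\bf U}_0,{\bf U}_1)=({\bf U}_L,{\bf U}_M)$. The CFL hypothesis \eqref{HLLCcfl22} required by Lemma \ref{HLLC_std_euler:old00} is $2\lambda\,\alpha_{\max}({\bf U})\le 1$ for ${\bf U}$ in the corresponding pair, and both of these are implied by \eqref{HLLCcfl} with the maximum taken over the triple $\{{\bf U}_L,{\bf U}_M,{\bf U}_R\}$. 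Therefore $\widetilde{\bf U}^{(0)},\widetilde{\bf U}^{(1)}\in G$.

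Finally, I would invoke the convexity of $G$ (Lemma \ref{lem1}) to conclude that the equal-weight average ${\bf U}_\lambda=\tfrac{1}{2}\widetilde{\bf U}^{(0)}+\tfrac{1}{2}\widetilde{\bf U}^{(1)}$ lies in $G$. There is no genuine obstacle here; the only thing to be careful about is bookkeeping the indices so that the two applications of Lemma \ref{HLLC_std_euler:old00} line up with \eqref{eq:www11} and \eqref{eq:www22} correctly, and verifying that the doubled CFL constant explains precisely the extra factor of $\tfrac{1}{2}$ in \eqref{HLLCcfl}.
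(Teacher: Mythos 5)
Your proposal is correct and is essentially identical to the paper's own proof: the same splitting of ${\bf U}_\lambda$ into the two terms ${\bf U}_M - 2\lambda({\bf F}^{hllc}({\bf U}_M,{\bf U}_R)-{\bf F}({\bf U}_M))$ and ${\bf U}_M - 2\lambda({\bf F}({\bf U}_M)-{\bf F}^{hllc}({\bf U}_L,{\bf U}_M))$, the same two applications of Lemma \ref{HLLC_std_euler:old00} with the doubled parameter $2\lambda$, and the same final appeal to convexity of $G$.
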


\begin{proof}
	Under the condition \eqref{HLLCcfl}, applying Lemma \ref{HLLC_std_euler:old00} leads to
	$$  {\bf U}_M - 2\lambda \left(   {\bf F} ({\bf U}_M )  - {\bf F}^{hllc} ({\bf U}_L,{\bf U}_M) \right) \in G, \qquad 
	{\bf U}_M - 2\lambda \left(  
	{\bf F}^{hllc} ({\bf U}_M,{\bf U}_R) - 
	{\bf F} ({\bf U}_M)   \right) \in G.
	$$
	Taking average of the above two terms and using the convexity of $G$ yield 
	${\bf U}_\lambda \in G$. 
\end{proof}

As generalization of Lemmas \ref{HLLC_std_euler:old00} and \ref{HLLC_std_euler:old}, the following results discuss the positivity of 
a properly modified HLLC flux, used in the construction of well-balanced methods in Section \ref{sec:method1D}. 

\begin{lemma} \label{HLLC_std_euler00}
	For any parameters $\zeta_1,~\zeta_2,~\zeta_3,~\zeta_4 \in \mathbb R^+$ and  
	any two admissible states ${\bf U}_0 , {\bf U}_1 \in G$, if $\lambda > 0$ and satisfies \eqref{HLLCcfl22}, we have 
	\begin{align}\label{eq:www11gg}
	& \zeta_2 {\bf U}_1 - \lambda \left(   {\bf F} (\zeta_2 {\bf U}_1)  - {\bf F}^{hllc} ( \zeta_1 {\bf U}_0,\zeta_2 {\bf U}_1) \right) \in G, 
	\\ \label{eq:www22gg}
	&  \zeta_3 {\bf U}_0 - \lambda \left(  
	{\bf F}^{hllc} (\zeta_3 {\bf U}_0, \zeta_4{\bf U}_1) - 
	{\bf F} ( \zeta_3 {\bf U}_0)   \right) \in G.
	\end{align}
\end{lemma}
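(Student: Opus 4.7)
The plan is to reduce Lemma \ref{HLLC_std_euler00} to the already-established Lemma \ref{HLLC_std_euler:old00} by exploiting a simple scaling invariance. The key observation is that for any $\zeta>0$ and ${\bf U}=(\rho,m,E)^\top\in G$, the scaled state $\zeta {\bf U}$ still lies in $G$ by Lemma \ref{lem2}, and crucially, the primitive quantities $u = m/\rho$ and $p/\rho$ are both invariant under ${\bf U}\mapsto \zeta {\bf U}$. Indeed, under this scaling $u$ is unchanged since both $m$ and $\rho$ rescale by the same factor, and $p=(\gamma-1)(E-m^2/(2\rho))$ rescales by $\zeta$, so $p/\rho$ is preserved. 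Consequently $\alpha_\pm({\bf U})$ and $\alpha_{\max}({\bf U})$ are invariant under positive scaling.

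With this observation, I would apply Lemma \ref{HLLC_std_euler:old00} to the two admissible states $\widetilde{\bf U}_0 := \zeta_1 {\bf U}_0 \in G$ and $\widetilde{\bf U}_1 := \zeta_2 {\bf U}_1 \in G$. The CFL-type condition required to invoke that lemma is
\begin{equation*}
\lambda \max_{\widetilde{\bf U}\in\{\widetilde{\bf U}_0,\widetilde{\bf U}_1\}} \alpha_{\max}(\widetilde{\bf U}) \le 1,
\end{equation*}
which, by the scaling invariance of $\alpha_{\max}$, coincides exactly with the assumed condition \eqref{HLLCcfl22}. Lemma \ref{HLLC_std_euler:old00} then yields
\begin{equation*}
\widetilde{\bf U}_1 - \lambda\bigl({\bf F}(\widetilde{\bf U}_1) - {\bf F}^{hllc}(\widetilde{\bf U}_0,\widetilde{\bf U}_1)\bigr) \in G,
\end{equation*}
which is precisely \eqref{eq:www11gg}. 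The inclusion \eqref{eq:www22gg} follows in the same way by applying the second part of Lemma \ref{HLLC_std_euler:old00} to the states $\zeta_3 {\bf U}_0$ and $\zeta_4 {\bf U}_1$.

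Since all the substantive work — positivity of the HLLC intermediate states and the CFL analysis — has already been done in Lemmas \ref{lem:HLLCpp} and \ref{HLLC_std_euler:old00}, I do not foresee a real obstacle here. The only point worth emphasizing carefully is the invariance of $\alpha_{\max}$ under the scaling (so that the CFL condition transfers verbatim); this is what allows the four scaling parameters $\zeta_1,\zeta_2,\zeta_3,\zeta_4$ to be arbitrary positive numbers rather than being constrained by $\lambda$.
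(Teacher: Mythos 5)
Your proposal is correct and matches the paper's own argument: the paper likewise deduces the lemma from Lemma \ref{HLLC_std_euler:old00} together with Lemma \ref{lem2}, using exactly the observation that $\alpha_{\max}$ is invariant under positive scaling so that the CFL condition \eqref{HLLCcfl22} transfers verbatim to the scaled states.
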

%
\noindent
This follows from Lemmas \ref{HLLC_std_euler:old00} and \ref{lem2}, and noting 
$$ \max_{ {\bf U} \in \{ \zeta_1  {\bf U}_0 , \zeta_2 {\bf U}_1\}  } \alpha_{\max} ({\bf U}  ) 
= \max_{ {\bf U} \in \{  {\bf U}_0 , {\bf U}_1 \}  } \alpha_{\max} ({\bf U}  ).$$

\begin{lemma} \label{HLLC_std_euler}
	For any parameters $\zeta_1,~\zeta_2,~\zeta_3 \in \mathbb R^+$ and 
	any admissible states ${\bf U}_L , {\bf U}_M, {\bf U}_R \in G$, if $\lambda > 0$ satisfies \eqref{HLLCcfl}, we have 
	$$
	\zeta _2 {\bf U}_M - \lambda \left(  {\bf F}^{hllc} ( \zeta _2  {\bf U}_M , \zeta _3  {\bf U}_R)  - {\bf F}^{hllc} ( \zeta _1 {\bf U}_L, \zeta _2 {\bf U}_M)   \right) \in G.
	$$
\end{lemma}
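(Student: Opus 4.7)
The plan is to mimic the proof strategy of Lemma \ref{HLLC_std_euler:old}, treating the scaled states $\zeta_1 {\bf U}_L$, $\zeta_2 {\bf U}_M$, $\zeta_3 {\bf U}_R$ as inputs to the scaled one-sided bounds already established in Lemma \ref{HLLC_std_euler00}. The decisive observation is that the CFL number $\lambda$ in the target inequality is half of that allowed in Lemma \ref{HLLC_std_euler00}, so a ``left update'' and a ``right update'' taken at the doubled rate $2\lambda$ are both admissible, and averaging them recovers exactly the two-sided flux difference in the statement.

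Concretely, I would first check that under the hypothesis \eqref{HLLCcfl}, the parameter $2\lambda$ satisfies the CFL condition \eqref{HLLCcfl22} for each of the pairs $\{{\bf U}_L,{\bf U}_M\}$ and $\{{\bf U}_M,{\bf U}_R\}$, since
\[
2\lambda \max\{\alpha_{\max}({\bf U}_L),\alpha_{\max}({\bf U}_M)\} \le 1,\qquad 2\lambda \max\{\alpha_{\max}({\bf U}_M),\alpha_{\max}({\bf U}_R)\} \le 1 .
\]
Then I would apply \eqref{eq:www11gg} of Lemma \ref{HLLC_std_euler00} with $({\bf U}_0,{\bf U}_1)=({\bf U}_L,{\bf U}_M)$ and parameters $(\zeta_1,\zeta_2)$, and \eqref{eq:www22gg} with $({\bf U}_0,{\bf U}_1)=({\bf U}_M,{\bf U}_R)$ and parameters $(\zeta_2,\zeta_3)$ (playing the roles of $\zeta_3,\zeta_4$ respectively), each with step size $2\lambda$, to obtain
\[
{\bf V}_1 := \zeta_2 {\bf U}_M - 2\lambda\bigl({\bf F}(\zeta_2 {\bf U}_M) - {\bf F}^{hllc}(\zeta_1 {\bf U}_L,\zeta_2 {\bf U}_M)\bigr) \in G,
\]
\[
{\bf V}_2 := \zeta_2 {\bf U}_M - 2\lambda\bigl({\bf F}^{hllc}(\zeta_2 {\bf U}_M,\zeta_3 {\bf U}_R) - {\bf F}(\zeta_2 {\bf U}_M)\bigr) \in G.
\]
A straightforward cancellation of the $\pm 2\lambda\,{\bf F}(\zeta_2 {\bf U}_M)$ terms shows $\tfrac12 {\bf V}_1 + \tfrac12 {\bf V}_2$ equals the quantity in the statement. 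Invoking the convexity of $G$ (Lemma \ref{lem1}) then closes the argument.

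I do not expect any serious obstacle here: the lemma is essentially a ``scaled triple-state'' version of Lemma \ref{HLLC_std_euler:old}, and all the hard work — the positivity of the HLLC intermediate states (Lemma \ref{lem:HLLCpp}), the one-sided update bound under the scaling (Lemma \ref{HLLC_std_euler00}), and the scale invariance of $\alpha_{\max}$ used implicitly through Lemma \ref{lem2} — has already been done. The only point worth being careful about is matching the parameter roles correctly when invoking \eqref{eq:www11gg} and \eqref{eq:www22gg}, so that the same scaled middle state $\zeta_2 {\bf U}_M$ appears in both pieces and the ${\bf F}(\zeta_2 {\bf U}_M)$ terms cancel upon averaging.
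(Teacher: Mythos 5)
Your proof is correct and follows essentially the same route as the paper: the paper cites Lemma \ref{HLLC_std_euler:old} applied to the scaled states $\zeta_1{\bf U}_L,\zeta_2{\bf U}_M,\zeta_3{\bf U}_R$ (using Lemma \ref{lem2} and the scale invariance of $\alpha_{\max}$), and that lemma's own proof is exactly your split into two one-sided updates at rate $2\lambda$ followed by averaging and convexity. You have merely unrolled that one level of indirection by invoking Lemma \ref{HLLC_std_euler00} directly, with the parameter roles matched correctly so that the $\pm 2\lambda\,{\bf F}(\zeta_2{\bf U}_M)$ terms cancel.
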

%
\noindent
The proof directly follows from Lemma \ref{HLLC_std_euler:old} by noting that 
$\zeta_1 {\bf U}_L , \zeta_2 {\bf U}_M, \zeta_3{\bf U}_R \in G$ (due to Lemma \ref{lem2})  and that
$ \max_{ {\bf U} \in \{ \zeta_1  {\bf U}_L , \zeta_2 {\bf U}_M, \zeta_3 {\bf U}_R \}  } \alpha_{\max} ({\bf U}  ) 
= \max_{ {\bf U} \in \{  {\bf U}_L , {\bf U}_M, {\bf U}_R \}  } \alpha_{\max} ({\bf U}  ).$

\section{Positivity-preserving well-balanced DG methods in one dimension} \label{sec:method1D}
In one spatial dimension, the Euler equations \eqref{eq:dD} take the form of
\begin{equation}\label{eq:1D}
{\bf U}_t +  ( {\bf F} ( {\bf U} ) )_x = {\bf S} ( {\bf U}, x ),
\end{equation}
with
\begin{equation}\label{eq:1D1}
{\bf U} = \begin{pmatrix}
\rho
\\
m
\\
E
\end{pmatrix},
\quad  
{\bf F} ( {\bf U} )= \begin{pmatrix}
\rho u
\\
\rho u^2 + p 
\\
(E+p) u
\end{pmatrix},
\quad 
{\bf S}({\bf U},x) = \begin{pmatrix}
0
\\
-\rho  \phi_x 
\\
-  m  \phi_x
\end{pmatrix}.
\end{equation}

\subsection{Well-balanced DG discretization}\label{sec:1DWB} 
Assume that the spatial domain $\Omega$ is divided into cells $\{I_j=(x_{j-\frac12},x_{j+1/2})\}$, and the mesh size is denoted by $h_j = x_{j+1/2}-x_{j-1/2}$, with $h=\max_j \{h_j\}$. 
The center of each cell is $x_j = ( x_{j-1/2} + x_{j+1/2} )/2$. Denote the DG numerical solutions as ${\bf U}_h(x,t)$, and for each $t \in (0,T_f]$, each component of ${\bf U}_h$ belongs to the finite dimensional space of discontinuous piecewise polynomial functions, $\mathbb V_h^k$, defined by 
$$
\mathbb V_h^k = \left\{ u(x) \in L^2(\Omega):~ 
u(x) \big|_{I_j} \in \mathbb P^k(I_j),~ \forall  j  \right\},
$$
where $\mathbb P^k(I_j)$ denotes the space of polynomials of degree up to $k$ in cell $I_j$. 
Then the semi-discrete DG methods for \eqref{eq:1D} are given as follows: 
for any test function $v \in \mathbb V_h^k$, ${\bf U}_h$ is computed by 
\begin{equation}\label{eq:1DDG}
\int_{I_j} ({\bf U}_h)_t v {\rm d} x - \int_{I_j} {\bf F}({\bf U}_h) v_x {\rm d} x 
+ \widehat{\bf F}_{j+\frac12} v( x_{j+\frac12}^- ) - \widehat{\bf F}_{j-\frac12} v( x_{j-\frac12}^+ ) 
= \int_{I_j} {\bf S} v {\rm d} x, 
\end{equation}
where $\widehat{\bf F}_{j+1/2}$ denotes the numerical flux at $x_{j+1/2}$. The notations 
$x_{j+1/2}^-$ and $x_{j+1/2}^+$ indicate the associated limits at $x_{j+1/2}$ 
taken from the left and right sides, respectively, with ${\bf U}_{j+1/2}^\pm := {\bf U}_h ( x_{j+1/2}^\pm )$.
For notional convenience, the $t$ dependence of all quantities is suppressed hereafter. 

Now, we  
construct the well-balanced DG methods which preserve a general equilibrium state \eqref{eq:equi1}. Assume that the target stationary hydrostatic solutions to be preserved are explicitly known and are denoted by $\{\rho^e(x), p^e(x), u^e(x)=0\}$. 
This yields 
\begin{equation}\label{eq:1DSsol}
( p^e(x) )_x = - \rho^e(x) \phi_x,\qquad u^e(x)=0.
\end{equation}
Let $\rho^e_h(x)$ and $p^e_h(x)$ denote the projections of 
$
\rho^e(x) 
$ and $p^e(x)$ onto the space $\mathbb V_h^k$, respectively.

To render the DG methods \eqref{eq:1DDG} well-balanced, we consider the modified HLLC numerical flux
\begin{equation}\label{eq:1Dflux}
\widehat{\bf F}_{j+\frac12} = 
{\bf F}^{hllc} \left( \frac{ p^{e,\star}_{j+\frac12}  }{ p_h^e( x_{j+\frac12}^- ) }   {\bf U}_{j+\frac12}^-, \frac{ p^{e,\star}_{j+\frac12}  }{ p_h^e( x_{j+\frac12}^+ ) }   {\bf U}_{j+\frac12}^+ \right),
\end{equation}
where $p^{e,\star}_{j+\frac12}$ is a suitable approximation to the equilibrium pressure at $x_{j+\frac12}$.  Here we define it as
\begin{equation}\label{eq:opt1}
p^{e,\star}_{j+\frac12} =  \frac12 \left( p_h^e( x_{j+\frac12}^- ) 
+ p_h^e( x_{j+\frac12}^+ ) \right),
\end{equation}
and other choices of $p^{e,\star}_{j+\frac12}$, including $\min \big\{ p_h^e( x_{j+\frac12}^- ), p_h^e( x_{j+\frac12}^+ ) \big\}$ and $\max \big\{ p_h^e( x_{j+\frac12}^- ), p_h^e( x_{j+\frac12}^+ ) \big\}$, also work. 
This modification does not affect the accuracy, provided that $\rho^e(x)$ and $p^e(x)$ are smooth. The element integral $\int_{I_j} {\bf F}({\bf U}_h) v_x {\rm d} x $ in \eqref{eq:1DDG} is approximated by 
the standard quadrature rule
\begin{equation}\label{eq:elemflux}
\int_{I_j} {\bf F}({\bf U}_h) v_x {\rm d} x \approx h_j \sum_{\mu=1}^N \omega_\mu  {\bf F}\big({\bf U}_h( x_j^{(\mu)} ) \big) v_x ( x_j^{(\mu)} ),
\end{equation}
where $\{ x_j^{(\mu)}, \, \omega_\mu \}_{1\le \mu \le N}$ denote the $N$-point Gauss quadrature nodes and weights in $I_j$. 

Next we consider the discretization of the integrals of the source terms in \eqref{eq:1DDG} to achieve the well-balanced property. 
Let ${\bf S} =: ( 0, S^{[2]}, S^{[3]} )^\top$. 
Following the techniques in \cite{XS2013,LiXingWBDG2016,LiXingWBFV2016}, we reformulate and decompose the integral of the source term in the momentum equation as 
\begin{align}\nonumber
&	 \int_{I_j} S^{[2]} v {\rm d} x  = \int_{I_j}  -\rho  \phi_x v {\rm d} x = \int_{I_j} \frac{\rho }{\rho^e} p_x^e v {\rm d} x 
= \int_{I_j} \left( \frac{\rho }{\rho^e} - 
\frac{ \overline \rho_j }{ \overline \rho^e_j}
+ \frac{ \overline \rho_j }{ \overline \rho^e_j}
\right) p_x^e v {\rm d}x 
\\ \label{eq:Sreformulation}
& 
\quad  = \int_{I_j} \left( \frac{\rho }{\rho^e} - 
\frac{ \overline \rho_j }{ \overline \rho^e_j}
\right) p_x^e v {\rm d}x + \frac{ \overline \rho_j }{ \overline \rho^e_j} \left( 
p^e ( x_{j+\frac12}^- ) v ( x_{j+\frac12}^- )  - p^e ( x_{j-\frac12}^+ )  v ( x_{j-\frac12}^+ ) - \int_{I_j} p^e v_x {\rm d} x
\right) ,
\end{align}
where \eqref{eq:1DSsol} has been used in the second identity, and the notation $\overline{(\cdot)}_j$ denotes the cell average of the associated quantity over $I_j$.  
We then approximate it by 
\begin{align} \label{eq:1Ds2}
& \int_{I_j} S^{[2]} v {\rm d} x  \approx h_j \sum_{ \mu=1}^N \omega_\mu \left( \frac{\rho_h ( x_j^{(\mu)} ) }{\rho^e_h  ( x_j^{(\mu)} ) } - 
\frac{ \overline {(\rho_h)}_j }{ \overline {(\rho^e_h)}_j  }
\right) (p_h^e)_x (   x_j^{(\mu)}  ) v  (  x_j^{(\mu)}  ) 
\\ \nonumber
& \qquad  + \frac{ \overline {(\rho_h)}_j }{ \overline {(\rho^e_h)}_j  } \left( 
p^{e,\star}_{j+\frac12}  v ( x_{j+\frac12}^- )  -  p^{e,\star}_{j-\frac12} v ( x_{j-\frac12}^+ ) - h_j \sum_{\mu=1}^N \omega_\mu p^e_h  ( x_j^{(\mu)} ) v_x  ( x_j^{(\mu)} ) 
\right)
=: \big\langle S^{[2]}, v \big\rangle_j.
\end{align}
Similarly, we approximate the integral of the source term in the energy equation by 
\begin{align} \label{eq:1Ds3}
& \int_{I_j} S^{[3]} v {\rm d} x  \approx h_j \sum_{ \mu=1}^N \omega_\mu \left( \frac{ m_h ( x_j^{(\mu)} ) }{\rho^e_h  ( x_j^{(\mu)} ) } - 
\frac{ \overline {(m_h)}_j }{ \overline {(\rho^e_h)}_j  }
\right) (p_h^e)_x (   x_j^{(\mu)}  ) v  (  x_j^{(\mu)}  ) 
\\ \nonumber
& \qquad  + \frac{ \overline {(m_h)}_j }{ \overline {(\rho^e_h)}_j  } \left( 
p^{e,\star}_{j+\frac12}  v ( x_{j+\frac12}^- )  - p^{e,\star}_{j-\frac12}  v ( x_{j-\frac12}^+ ) - h_j \sum_{\mu=1}^N \omega_\mu p^e_h  ( x_j^{(\mu)} ) v_x  ( x_j^{(\mu)} ) 
\right) =: \big\langle S^{[3]}, v \big\rangle_j,
\end{align}
Combining these leads to the well-balanced DG methods of the form
\begin{equation}\label{eq:1DDGweak}
\begin{aligned}
\int_{I_j} ({\bf U}_h)_t v {\rm d} x & = 
h_j \sum_{\mu=1}^N \omega_\mu  {\bf F}\big({\bf U}_h( x_j^{(\mu)} ) \big) v_x ( x_j^{(\mu)} ) 
- \left( \widehat{\bf F}_{j+\frac12} v( x_{j+\frac12}^- ) - \widehat{\bf F}_{j-\frac12} v( x_{j-\frac12}^+ ) \right)
\\
& \quad 
+ \Big( 0, \big\langle S^{[2]}, v \big\rangle_j, \big\langle S^{[3]}, v \big\rangle_j  \Big)^\top, \qquad \forall v \in \mathbb V_h^k.
\end{aligned}
\end{equation}

\begin{remark} \label{rem:WBPPconsideration}
	We here choose the modified HLLC flux \eqref{eq:1Dflux}, instead of the modified LF fluxes as in \cite{LiXingWBDG2016}, due to the following two considerations. 
	First, the HLLC flux satisfies the contact property (Lemma \ref{lem:HLLCcontact}), 
	which provides a unified modification approach to make the HLLC flux well-balanced for an arbitrary hydrostatic equilibrium; 
	whereas the modifications of the LF flux \cite{LiXingWBDG2016} have to be done separately for different types of equilibria. 
	Secondly, we will show that our modified HLLC flux \eqref{eq:1Dflux} also meets the positivity-preserving requirements, 
	whereas the modification to the LF fluxes in the polytropic equilibrium case may lose the positivity-preserving property. 
	We can prove the positivity of the well-balanced DG methods with the modified LF fluxes, only when isothermal equilibria are considered (see the Appendix \ref{app:DGLF}). 
\end{remark}
\begin{remark} \label{rem:WBPPconsideration1}
	Here, we approximate the integral $\int_{I_j} S^{[3]} v dx$ in \eqref{eq:1Ds3} in a way consistent with the term $\int_{I_j} S^{[2]} v dx$, 
	while in \cite{LiXingWBDG2016} $\int_{I_j} S^{[3]} v dx$ was approximated by the standard quadrature rule.  
	For the well-balancedness only, either approach is fine, and the standard one is even simpler. 
	However, our analysis will indicate that it is important to use a 
	``consistent'' approach for the purpose to accommodate the theoretical positivity-preserving property at the same time.  
\end{remark}

\begin{theorem}\label{thm:1DWB}
	For the 1D Euler equations \eqref{eq:1D} with gravitation, the semi-discrete DG schemes \eqref{eq:1DDGweak} are well-balanced for a general known stationary hydrostatic solution \eqref{eq:1DSsol}. 
\end{theorem}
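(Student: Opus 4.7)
The goal is to show that when the numerical solution coincides with the discrete equilibrium, i.e.\ $\rho_h\equiv \rho_h^e$, $m_h\equiv 0$, and $E_h\equiv p_h^e/(\gamma-1)$, the right-hand side of \eqref{eq:1DDGweak} vanishes identically for every test function $v\in\mathbb V_h^k$. I would verify this componentwise, since each of the three entries of the residual must be zero separately.

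First I would dispatch the density and energy components, which are essentially automatic consequences of $u\equiv 0$ together with the contact property of the HLLC flux. The volume flux contribution $h_j\sum_\mu\omega_\mu {\bf F}({\bf U}_h(x_j^{(\mu)}))v_x(x_j^{(\mu)})$ has first and third entries equal to $\rho u$ and $(E+p)u$, both vanishing at equilibrium. For the interface fluxes, note that the arguments passed into ${\bf F}^{hllc}$ in \eqref{eq:1Dflux} are, after the scaling by $p^{e,\star}_{j+1/2}/p_h^e(x_{j+1/2}^{\pm})$, of the form $(\tilde\rho_{L},0,p^{e,\star}_{j+1/2}/(\gamma-1))^\top$ and $(\tilde\rho_{R},0,p^{e,\star}_{j+1/2}/(\gamma-1))^\top$, because both momentum components are zero and the total energies are rescaled to the same value $p^{e,\star}_{j+1/2}/(\gamma-1)$. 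Lemma \ref{lem:HLLCcontact} then yields $\widehat{\bf F}_{j+1/2}=(0,p^{e,\star}_{j+1/2},0)^\top$, killing the first and third flux contributions. Finally, $\langle S^{[3]}, v\rangle_j=0$ because it is linear in $m_h$ and its cell average, both of which are zero.

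The main content is the momentum component, and this is where the designed source reformulation does its job. Using $p\equiv p_h^e$, the volume flux integrates to $h_j\sum_\mu\omega_\mu p_h^e(x_j^{(\mu)})v_x(x_j^{(\mu)})$, while the interface flux contribution is $-p^{e,\star}_{j+1/2}v(x_{j+1/2}^-)+p^{e,\star}_{j-1/2}v(x_{j-1/2}^+)$ by the computation above. In $\langle S^{[2]},v\rangle_j$ defined in \eqref{eq:1Ds2}, the pointwise ratio $\rho_h(x_j^{(\mu)})/\rho_h^e(x_j^{(\mu)})$ equals $1$, and hence also $\overline{(\rho_h)}_j/\overline{(\rho_h^e)}_j=1$, so the first quadrature sum in \eqref{eq:1Ds2} vanishes term by term. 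The remaining group in $\langle S^{[2]}, v\rangle_j$ reduces (since the prefactor $\overline{(\rho_h)}_j/\overline{(\rho_h^e)}_j$ is $1$) to $p^{e,\star}_{j+1/2}v(x_{j+1/2}^-)-p^{e,\star}_{j-1/2}v(x_{j-1/2}^+)-h_j\sum_\mu\omega_\mu p_h^e(x_j^{(\mu)})v_x(x_j^{(\mu)})$, which cancels the volume and interface flux contributions exactly.

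I do not anticipate a serious analytical obstacle; the proof is essentially algebraic bookkeeping. The one subtle point that I would be careful about is that the cancellation in the momentum equation uses both the pointwise identity $\rho_h/\rho_h^e\equiv 1$ and the equality of the ratio of cell averages to $1$, which is what motivates the particular reformulation \eqref{eq:Sreformulation} and the use of a single common interface pressure $p^{e,\star}_{j+1/2}$ in both the modified HLLC flux \eqref{eq:1Dflux} and the source discretization \eqref{eq:1Ds2}. If either ingredient were replaced, the telescoping would break, so the proof also verifies the consistency of the design. The well-balanced conclusion for a general equilibrium then follows with no restriction on the form of $\rho^e$, $p^e$.
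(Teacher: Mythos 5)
Your proposal is correct and follows essentially the same route as the paper's proof: componentwise verification, using the contact property (Lemma \ref{lem:HLLCcontact}) to reduce the modified HLLC flux to $(0,p^{e,\star}_{j+\frac12},0)^\top$, and then the exact cancellation between the momentum-equation flux terms and $\langle S^{[2]},v\rangle_j$ via the identities $\rho_h/\rho_h^e\equiv 1$ and $\overline{(\rho_h)}_j/\overline{(\rho^e_h)}_j=1$. No gaps.
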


\begin{proof}
	At the equilibrium state \eqref{eq:1DSsol}, we have 
	$
	\rho_h  = \rho^e_h, ~ u_h=u_h^e=0,~ E_h = \frac{p^e_h}{\gamma - 1},
	$
	which leads to
	\begin{align*}
	&\frac{ p^{e,\star}_{j+\frac12}  }{ p_h^e( x_{j+\frac12}^\pm ) }   {\bf U}_{j+\frac12}^\pm
	= \left( \rho^e_h( x_{j+\frac12}^\pm ) \frac{ p^{e,\star}_{j+\frac12}  }{ p_h^e( x_{j+\frac12}^\pm ) },~0,~ \frac{ p^{e,\star}_{j+\frac12} }{\gamma - 1} \right)^\top.
	\end{align*}
	Thanks to the contact property (Lemma \ref{lem:HLLCcontact}), 
	the modified HLLC numerical flux \eqref{eq:1Dflux} reduces to 
	\begin{equation}\label{WKLproof1124}
	\widehat{\bf F}_{j+\frac12} = 
	\big( 0,~p^{e,\star}_{j+\frac12},~0 \big)^\top.
	\end{equation}
	It is easy to observe that the well-balanced property holds for the mass and energy equations of \eqref{eq:1DDGweak}, as the first and third components of both
	the flux and source term approximations become zero.
	For the momentum
	equation, thanks to ${\rho_h ( x_j^{(\mu)} ) }/{\rho^e_h  ( x_j^{(\mu)} ) } =
	{ \overline {(\rho_h)}_j }/{ \overline {(\rho^e_h)}_j  } = 1$, we have  
	$$
	\big\langle S^{[2]}, v \big\rangle_j = 
	p^{e,\star}_{j+\frac12}  v ( x_{j+\frac12}^- )  -  p^{e,\star}_{j-\frac12} v ( x_{j-\frac12}^+ ) - h_j \sum_{\mu=1}^N \omega_\mu p^e_h  ( x_j^{(\mu)} ) v_x  ( x_j^{(\mu)} ).
	$$
	Let $F_2$ denote the second component of ${\bf F}$. 
	Since $u_h=0$, the flux term $F_2({\bf U}_h( x_j^{(\mu)} ))$ reduces to $p^e_h  ( x_j^{(\mu)} )$. 
	This, together with \eqref{WKLproof1124}, imply 
	\begin{align*}
	& h_j \sum_{\mu=1}^N \omega_\mu  F_2 \big({\bf U}_h( x_j^{(\mu)} ) \big) v_x ( x_j^{(\mu)} ) 
	- \left( \widehat F_{2,j+\frac12} v( x_{j+\frac12}^- ) - \widehat F_{2,j-\frac12} v( x_{j-\frac12}^+ ) \right)
	\\
	& \quad = h_j \sum_{\mu=1}^N \omega_\mu p^e_h  ( x_j^{(\mu)} ) v_x  ( x_j^{(\mu)} ) - \left( p^{e,\star}_{j+\frac12}  v ( x_{j+\frac12}^- )  -  p^{e,\star}_{j-\frac12} v ( x_{j-\frac12}^+ ) \right),
	\end{align*}
	which is exactly equal to $-\langle S^{[2]}, v \rangle_j$. 
	Therefore, the flux and source term approximations balance each other, which
	leads to the well-balanced property of our methods \eqref{eq:1DDGweak}.
\end{proof}

The weak form \eqref{eq:1DDGweak}
can be rewritten in the ODE form as 
\begin{equation}\label{semi_discrete}
\frac{{\rm d} {\bf U}_h (t)} {{\rm d} t} = {\bf L} ( {\bf U}_h ),  
\end{equation}
after choosing a suitable basis of $\mathbb V_h^k$ and representing 
${\bf U}_h$ as a linear combination of the basis functions; see \cite{CockburnShu1989} for details. 
The semi-discrete DG schemes \eqref{semi_discrete} can be further discretized in time by some explicit strong-stability-preserving (SSP) Runge-Kutta (RK) methods \cite{GottliebShuTadmor2001}. 
For example, with $\Delta t$ being the time step size, the third-order accurate SSP RK method is given by 
\begin{equation}\label{SSP1}
\begin{aligned}
&{\bf U}^{(1)}_h = {\bf U}_h^n + \Delta t {\bf L} ( {\bf U}_h^n ),
\\
&{\bf U}^{(2)}_h = \frac34 {\bf U}_h^n + \frac14 \Big(  {\bf U}^{(1)}_h + \Delta t {\bf L} ( {\bf U}^{(1)}_h ) \Big),
\\
&{\bf U}_h^{n+1} = \frac13 {\bf U}_h^n + \frac23 \Big(  {\bf U}^{(2)}_h + \Delta t {\bf L} ( {\bf U}^{(2)}_h ) \Big).
\end{aligned}
\end{equation}

\subsection{Positivity of first-order well-balanced DG scheme}
In this and the next subsections, we shall analyze the positivity of the well-balanced DG schemes \eqref{eq:1DDGweak}. The well-balanced modification of the numerical flux and discretization of source terms lead to additional difficulties in the positivity-preserving analyses, which are more complicated than 
the analyses for the standard DG methods. 

Denote the cell average of ${\bf U}_h$ over $I_j$ by 
$$
\overline {\bf U}_j(t) = \frac{1}{h_j} \int_{ I_j } {\bf U}_h (x,t) {\rm d} x.
$$
Taking the test function $v=1$ in \eqref{eq:1DDGweak}, one can obtain the semi-discrete 
evolution equations satisfied by the cell average as 
\begin{equation}\label{eq:AVEevolve}
\frac{{\rm d} \overline {\bf U}_j (t)} { {\rm d} t} = {\bf L}_j ( {\bf U}_h ) := -\frac{1}{h_j} \left( 
\widehat{\bf F}_{j+\frac12}  - \widehat{\bf F}_{j-\frac12}
\right) + \overline{\bf S}_j,
\end{equation}
where $\overline{\bf S}_j= \big( 0, \overline S_j^{[2]}, \overline S_j^{[3]} \big)^\top$ with 
$\overline S_j^{[\ell]} := \frac1{h_j} \left\langle S^{[\ell]}, 1 \right\rangle_j,$ $\ell = 2,~ 3$. 

When the polynomial degree $k=0$, 
we have $ {\bf U}_h (x,t) \equiv \overline {\bf U}_j(t)$ for all $x \in I_j$, and
the above DG methods \eqref{eq:AVEevolve} reduce to the 
corresponding first-order scheme with 
\begin{equation}\label{eq:1Dflux1}
\widehat{\bf F}_{j+\frac12} = 
{\bf F}^{hllc} \left( \frac{ p^{e,\star}_{j+\frac12}  }{ \overline p_j^e } \overline  {\bf U}_j, ~\frac{ p^{e,\star}_{j+\frac12}  }{ \overline p_{j+1}^e }   \overline {\bf U}_{j+1} \right). 
\end{equation}
We start by showing the positivity property of the homogeneous case.

\begin{lemma}\label{lem:1DF}
	If the DG polynomial degree $k=0$ and $\overline {\bf U}_{j} \in G$ for all $j$, we have
	\begin{equation}\label{key1112}
	\overline {\bf U}_j -\frac{\Delta t}{h_j} \left( \widehat{\bf F}_{j+\frac12} - \widehat{\bf F}_{j-\frac12}   \right) \in G, \quad \forall j,
	\end{equation}
	under the CFL-type condition 
	\begin{equation}\label{eq:CFL1}
	\frac{\Delta t}{h_j} \left( \frac
	{ p^{e,\star}_{j+\frac12} + p^{e,\star}_{j-\frac12}  } { \overline p_j^e  }  \max_{ {\bf U} \in \{  \overline {\bf U}_{j-1} , \overline {\bf U}_j, \overline {\bf U}_{j+1} \}  } \alpha_{\max} ({\bf U}  ) \right)  \le \frac12. 
	\end{equation}
\end{lemma}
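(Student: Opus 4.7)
The plan is to decompose the target quantity
$$\mathcal{T}_j := \overline{\bf U}_j - \tfrac{\Delta t}{h_j}\bigl(\widehat{\bf F}_{j+\frac12} - \widehat{\bf F}_{j-\frac12}\bigr)$$
as a sum of three vectors, two produced by Lemma \ref{HLLC_std_euler00} and one by Lemma \ref{lem:LFflux}, and then conclude with Lemma \ref{lem3}. To ease notation, write $p_\pm := p^{e,\star}_{j\pm 1/2}$, $\overline p := \overline{p}^{\,e}_j$, and $\zeta_\pm := p_\pm/\overline p$. A preliminary observation I would use repeatedly is the $1$-homogeneity ${\bf F}(c{\bf U}) = c\,{\bf F}({\bf U})$ for $c>0$, which is immediate from the form of ${\bf F}$ together with $p(c{\bf U}) = c\,p({\bf U})$.

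Setting $\lambda := 2\Delta t (p_+ + p_-)/(h_j \overline p)$ and applying Lemma \ref{HLLC_std_euler00} at the two interfaces gives, via homogeneity,
$$\zeta_+ \overline{\bf U}_j - \lambda\bigl(\widehat{\bf F}_{j+\frac12} - \zeta_+ {\bf F}(\overline{\bf U}_j)\bigr) \in G, \qquad \zeta_- \overline{\bf U}_j - \lambda\bigl(\zeta_- {\bf F}(\overline{\bf U}_j) - \widehat{\bf F}_{j-\frac12}\bigr) \in G,$$
whose CFL requirement $\lambda\cdot\max_{\overline{\bf U}_{j-1},\overline{\bf U}_j,\overline{\bf U}_{j+1}} \alpha_{\max} \le 1$ is precisely \eqref{eq:CFL1}. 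Rescaling each of these by $\overline p/(2(p_++p_-))$ (using Lemma \ref{lem2}) and summing yields an admissible vector equal to
$$\tfrac12 \overline{\bf U}_j + \tfrac{\Delta t(p_+ - p_-)}{h_j \overline p}{\bf F}(\overline{\bf U}_j) - \tfrac{\Delta t}{h_j}\bigl(\widehat{\bf F}_{j+\frac12} - \widehat{\bf F}_{j-\frac12}\bigr).$$
The $\widehat{\bf F}_{j\pm 1/2}$-coefficients already match $\mathcal{T}_j$, so it only remains to cancel the residual ${\bf F}(\overline{\bf U}_j)$-term and to supply the missing $\tfrac12\overline{\bf U}_j$.

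Both tasks are handled by a single application of Lemma \ref{lem:LFflux}: with $\sigma := 2\Delta t(p_- - p_+)/(h_j\overline p)$, the element $\tfrac12\overline{\bf U}_j + \tfrac12 \sigma\,{\bf F}(\overline{\bf U}_j)$ lies in $\overline G$ as soon as $|\sigma|\,\alpha_{\max}(\overline{\bf U}_j)\le 1$, which follows from $|p_+ - p_-|\le p_+ + p_-$ combined with \eqref{eq:CFL1}. Adding this third element to the previous sum and invoking Lemma \ref{lem3}, the ${\bf F}(\overline{\bf U}_j)$-contributions cancel and the $\overline{\bf U}_j$-coefficients add to $1$, delivering $\mathcal{T}_j\in G$.

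The principal obstacle is the asymmetry $\zeta_+\ne\zeta_-$: the two modified HLLC fluxes on either side of cell $I_j$ scale the middle state $\overline{\bf U}_j$ by different factors, which rules out any direct appeal to Lemma \ref{HLLC_std_euler} (that lemma requires a common scaling of the middle state). The remedy is to accept an asymmetric ${\bf F}(\overline{\bf U}_j)$-residual after combining the two one-sided HLLC admissibility bounds, and then absorb it via the LF-type bound of Lemma \ref{lem:LFflux}; the elementary inequality $|p_+-p_-|\le p_++p_-$ is exactly what keeps the CFL for this third contribution within the factor-of-two built into \eqref{eq:CFL1}.
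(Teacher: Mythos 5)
Your proof is correct, but it takes a genuinely different route from the paper's. The paper decomposes the update into only two pieces by inserting an \emph{artificial} HLLC flux ${\bf F}^{hllc}\big(\tfrac{p^{e,\star}_{j-1/2}}{\overline p_j^e}\overline{\bf U}_j,\tfrac{p^{e,\star}_{j+1/2}}{\overline p_j^e}\overline{\bf U}_j\big)$ between the two differently scaled copies of $\overline{\bf U}_j$; this artificial flux cancels exactly when the two pieces are summed, and each piece is then handled in one stroke by the three-state Lemma~\ref{HLLC_std_euler} (whose built-in factor $\tfrac12$ produces the CFL condition \eqref{eq:CFL1} directly, with no residual left over). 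You instead anchor each interface piece at the exact flux $\zeta_\pm{\bf F}(\overline{\bf U}_j)$ via the two-state Lemma~\ref{HLLC_std_euler00}, which leaves the residual $\tfrac{\Delta t(p_+-p_-)}{h_j\overline p}{\bf F}(\overline{\bf U}_j)$ that you then absorb with a third, LF-type piece $\tfrac12\overline{\bf U}_j+\tfrac12\sigma{\bf F}(\overline{\bf U}_j)$ controlled by Lemma~\ref{lem:LFflux} and the inequality $|p_+-p_-|\le p_++p_-$; all steps check out, including the use of Lemma~\ref{lem:LFflux} with a signed $\sigma$ (the lemma permits $\lambda\in\mathbb R$) and the final appeal to Lemma~\ref{lem3}, and you land on exactly the same CFL condition \eqref{eq:CFL1}. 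What the paper's version buys is a shorter argument with no residual bookkeeping and a template that transfers verbatim to the high-order proof of Theorem~\ref{thm:1Dhigh} (where the same artificial-flux cancellation reappears in ${\bf W}_2^\pm$); what your version buys is that it avoids the three-state lemma entirely and makes explicit where the asymmetry $\zeta_+\ne\zeta_-$ is paid for, at the modest cost of one extra lemma and one elementary inequality.
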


\begin{proof}
	Using \eqref{eq:1Dflux1}, we have 
	\begin{align*}
	& \overline {\bf U}_j -\frac{\Delta t}{h_j} \left( \widehat{\bf F}_{j+\frac12} - \widehat{\bf F}_{j-\frac12}   \right)  = 
	\overline {\bf U}_j - \frac{\Delta t}{h_j} 
	\left[
	{\bf F}^{hllc} \left( \frac{ p^{e,\star}_{j+\frac12}  }{ \overline p_{j}^e }  \overline {\bf U}_j,  \frac{ p^{e,\star}_{j+\frac12}  }{ \overline p_{j+1}^e }  \overline {\bf U}_{j+1} \right)
	- {\bf F}^{hllc} \left( \frac{ p^{e,\star}_{j-\frac12}  }{ \overline p_{j-1}^e }  \overline  {\bf U}_{j-1}, \frac{ p^{e,\star}_{j-\frac12}  }{ \overline p_{j}^e }  \overline {\bf U}_{j} \right)
	\right]. 
	\end{align*}		
	Note that, the well-balanced modification leads to 
	$$\frac{ p^{e,\star}_{j+\frac12}  }{ \overline p_{j}^e }  \overline {\bf U}_j \neq \frac{ p^{e,\star}_{j-\frac12}  }{ \overline p_{j}^e }  \overline {\bf U}_{j},$$
	so that the positivity of the standard HLLC scheme cannot be used directly. To address this issue, we make the following decomposition
	\begin{align*}
	\begin{split}	
	&\overline {\bf U}_j -\frac{\Delta t}{h_j} \left( \widehat{\bf F}_{j+\frac12} - \widehat{\bf F}_{j-\frac12}   \right)  
	=	\beta_{j}
	\left( {\bf W }_1 + {\bf W}_2   \right),
	\end{split} 
	\end{align*}
	with $\beta_{j}:= \frac{ \overline p_{j}^e  }
	{ p^{e,\star}_{j+\frac12} + p^{e,\star}_{j-\frac12}  } >0$, and 
	\begin{align*}
	& {\bf W}_1 = 		\frac{ p^{e,\star}_{j+\frac12}  }
	{ \overline p_{j}^e  }
	\overline {\bf U}_j -  \frac{\Delta t}{ \beta_{j}  h_j} 
	\left[
	{\bf F}^{hllc} \left( \frac{ p^{e,\star}_{j+\frac12}  }{ \overline p_{j}^e }  \overline  {\bf U}_j, \frac{ p^{e,\star}_{j+\frac12}  }{ \overline p_{j+1}^e }  \overline {\bf U}_{j+1} \right)
	- {\bf F}^{hllc} \left( \frac{ p^{e,\star}_{j-\frac12}  }{ \overline p_{j}^e }  \overline  {\bf U}_{j}, \frac{ p^{e,\star}_{j+\frac12}  }{ \overline p_{j}^e }  \overline  {\bf U}_{j} \right)
	\right],
	\\
	& {\bf W}_2 = \frac{ p^{e,\star}_{j-\frac12}  }
	{ \overline p_{j}^e   }
	\overline {\bf U}_j
	- \frac{\Delta t} { \beta_{j}   h_j} 
	\left[
	{\bf F}^{hllc} \left( \frac{ p^{e,\star}_{j-\frac12}  }{ \overline p_{j}^e }  \overline {\bf U}_{j}, \frac{ p^{e,\star}_{j+\frac12}  }{ \overline p_{j}^e }  \overline {\bf U}_{j} \right)
	- {\bf F}^{hllc} \left( \frac{ p^{e,\star}_{j-\frac12}  }{ \overline p_{j-1}^e }  \overline {\bf U}_{j-1}, \frac{ p^{e,\star}_{j-\frac12}  }{ \overline p_{j}^e  } \overline  {\bf U}_{j} \right)
	\right].
	\end{align*}
	Applying Lemma \ref{HLLC_std_euler} leads to ${\bf W}_1, {\bf W}_2 \in G$ under the condition \eqref{eq:CFL1}. We can conclude \eqref{key1112} by using 
	Lemma \ref{lem3}, and this completes the proof.
\end{proof}

%
%

For all $j$, we define $\overline e_j := \frac{1}{\overline \rho_j} \Big( \overline E_j - 
\frac{ \overline m_j^2 }{ 2  \overline \rho_j } \Big)$ and $\widehat \alpha_j := \widehat \alpha_j^{F}+ \widehat \alpha_j^{S}$ with 
\begin{align*}
&
\widehat \alpha_j^{F} :=  2\frac
{ p^{e,\star}_{j+\frac12} + p^{e,\star}_{j-\frac12}  } { \overline p_{j}^e  }  \max_{ {\bf U} \in \{    \overline {\bf U}_{j-1} , \overline {\bf U}_j, \overline {\bf U}_{j+1} \}  } \alpha_{\max} ({\bf U}  ),\qquad \widehat \alpha_j^{S}:=  \frac{ \left| p^{e,\star}_{j+\frac12} - p^{e,\star}_{j-\frac12} \right|  }{ \overline \rho_j^e \sqrt{2 \overline e_j }   }.
\end{align*}

\begin{theorem}\label{thm:1D1st}	
	If the DG polynomial degree $k=0$ and $\overline {\bf U}_{j} \in G$ for all $j$, we have
	\begin{equation}\label{eq:1Dorder1}
	\overline {\bf U}_j + \Delta t {\bf L}_j ( {\bf U}_h  ) \in G, \quad \forall j,	
	\end{equation}
	under the CFL-type condition 
	\begin{equation}\label{eq:CFL1F}	
	\widehat \alpha_j \Delta t \le h_j.
	\end{equation}	
\end{theorem}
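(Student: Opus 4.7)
The plan is to decompose $\overline{\bf U}_j + \Delta t\,{\bf L}_j({\bf U}_h)$ into a ``pure flux'' piece handled by Lemma~\ref{lem:1DF} and a ``pure source'' piece handled by Lemma~\ref{lem:UcontrolS}, and then recombine the two via Lemma~\ref{lem3}. Before doing so, I would simplify $\overline{\bf S}_j$ using the hypothesis $k=0$: since $\rho^e_h,p^e_h$ are piecewise constants on the mesh, $(p^e_h)_x\equiv 0$ on $I_j$ and $\rho_h/\rho^e_h\equiv \overline\rho_j/\overline\rho^e_j$ there, so taking $v=1$ in \eqref{eq:1Ds2}--\eqref{eq:1Ds3} collapses the quadrature sums and leaves
\[
\Delta t\,\overline{\bf S}_j \;=\; \delta\,(0,\,\overline\rho_j,\,\overline m_j)^\top,\qquad \delta\;:=\;\frac{\Delta t\,(p^{e,\star}_{j+\frac12}-p^{e,\star}_{j-\frac12})}{h_j\,\overline\rho^e_j},
\]
which already has the shape required by Lemma~\ref{lem:UcontrolS} (with ${\bf a}=1$ in one dimension).

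Next I would split with weights $\theta_1:=\widehat\alpha^F_j/\widehat\alpha_j>0$ and $\theta_2:=\widehat\alpha^S_j/\widehat\alpha_j\ge 0$, so that $\theta_1+\theta_2=1$, and write
\[
\overline{\bf U}_j+\Delta t\,{\bf L}_j({\bf U}_h) \;=\; \theta_1\underbrace{\Bigl[\overline{\bf U}_j-\tfrac{\Delta t}{\theta_1 h_j}\bigl(\widehat{\bf F}_{j+\frac12}-\widehat{\bf F}_{j-\frac12}\bigr)\Bigr]}_{=:{\bf A}} \;+\; \underbrace{\bigl[\theta_2\overline{\bf U}_j+\delta\,(0,\overline\rho_j,\overline m_j)^\top\bigr]}_{=:{\bf B}}.
\]
For ${\bf A}$, Lemma~\ref{lem:1DF} applied with $\Delta t/\theta_1$ in place of $\Delta t$ yields ${\bf A}\in G$, because its hypothesis $\tfrac{\Delta t}{\theta_1 h_j}\widehat\alpha^F_j\le 1$ reduces, by the choice of $\theta_1$, precisely to the CFL \eqref{eq:CFL1F}. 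For ${\bf B}$, Lemma~\ref{lem:UcontrolS} with $\lambda=\theta_2$, ${\bf U}=\overline{\bf U}_j$, ${\bf a}=1$ yields ${\bf B}\in\overline G$, because its hypothesis $|\delta|/\sqrt{2\overline e_j}\le\theta_2$ is exactly $\Delta t\,\widehat\alpha^S_j/h_j\le\theta_2$, again implied by \eqref{eq:CFL1F}. Finally, Lemma~\ref{lem3} (with $\lambda_1=\theta_1>0$, ${\bf U}_1={\bf A}\in G$, $\lambda_0=1$, ${\bf U}_0={\bf B}\in\overline G$) delivers $\theta_1{\bf A}+{\bf B}\in G$, which is \eqref{eq:1Dorder1}.

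The main obstacle, and really the only design decision in the proof, is choosing the split so that the two sub-CFLs coming from Lemmas~\ref{lem:1DF} and \ref{lem:UcontrolS} merge into the single clean condition $\widehat\alpha_j\Delta t\le h_j$; the weights $\theta_1=\widehat\alpha^F_j/\widehat\alpha_j$ and $\theta_2=\widehat\alpha^S_j/\widehat\alpha_j$ are engineered exactly for this. A smaller secondary point is to verify that the cell-average form of $\overline{\bf S}_j$ produced by the ``consistent'' source discretization matches the $(0,\rho{\bf a},{\bf m}\cdot{\bf a})^\top$ template of Lemma~\ref{lem:UcontrolS}; had $S^{[3]}$ been discretized with the standard quadrature (as noted in Remark~\ref{rem:WBPPconsideration1}) this structural match would fail, which is the key motivation behind the discretization choice made in \eqref{eq:1Ds3}.
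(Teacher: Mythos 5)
Your proof is correct and follows essentially the same route as the paper: the paper's own argument uses exactly the decomposition $\overline{\bf U}_j+\Delta t\,{\bf L}_j=\frac{\widehat\alpha_j^F}{\widehat\alpha_j}{\bf W}_F+\frac{1}{\widehat\alpha_j}{\bf W}_S$, which coincides with your $\theta_1{\bf A}+{\bf B}$, and invokes the same three lemmas (Lemma~\ref{lem:1DF} for the flux part, Lemma~\ref{lem:UcontrolS} for the source part, Lemma~\ref{lem3} to recombine) with the same weights engineered to collapse the two sub-conditions into $\widehat\alpha_j\Delta t\le h_j$. The only cosmetic difference is that the paper treats the degenerate case $p^{e,\star}_{j+\frac12}=p^{e,\star}_{j-\frac12}$ separately, whereas your argument absorbs it via $\lambda=0$ in Lemma~\ref{lem:UcontrolS}.
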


\begin{proof}
	When $k=0$, one has $\overline \rho_j^e= \frac1{h_j}  \int_{I_j} \rho_h^e(x)dx>0 $, $\overline p_j^e= \frac 1 {h_j}  \int_{I_j} p_h^e(x)dx>0 $ and 
	\begin{equation}\label{proof1S}
	\overline {\bf S}_j =   \frac{ \left| p^{e,\star}_{j+\frac12} - p^{e,\star}_{j-\frac12} \right|  }{ h_j \overline \rho_j^e  } 
	\Big( 0,
	\overline \rho_j, 
	\overline m_j \Big)^\top.
	\end{equation}
	If $\big| p^{e,\star}_{j+\frac12} - p^{e,\star}_{j-\frac12} \big|=0$, we have $\overline {\bf S}_j={\bf 0}$, and 
	$\overline {\bf U}_j + \Delta t {\bf L}_j ( {\bf U}_h  ) = \overline {\bf U}_j -\frac{\Delta t}{h_j} \big( \widehat{\bf F}_{j+\frac12} - \widehat{\bf F}_{j-\frac12}   \big) \in G,
	$ according to Lemma \ref{lem:1DF}. Otherwise, decompose the scheme as 
	\begin{equation}\label{eq:proofwkl143}
	\overline {\bf U}_j + \Delta t {\bf L}_j ( {\bf U}_h  ) = \overline {\bf U}_j -\frac{\Delta t}{h_j} \left( \widehat{\bf F}_{j+\frac12} - \widehat{\bf F}_{j-\frac12}   \right) + \Delta t \overline {\bf S}_j
	= \frac{ \widehat \alpha_j^F }{\widehat \alpha_j} 
	{\bf W}_F+  \frac{ 1 }{\widehat \alpha_j}  {\bf W}_S,
	\end{equation}
	where \vspace*{-3mm}
	\begin{align*}
	&{\bf W}_F:=\overline {\bf U}_j - \frac{\Delta t \widehat \alpha_j }{ h_j \widehat \alpha_j^F } \left( \widehat{\bf F}_{j+\frac12} - \widehat{\bf F}_{j-\frac12}   \right), 
	\\
	&{\bf W}_S := \widehat \alpha_j^S \overline {\bf U}_j +  { \widehat \alpha_j \Delta t } \overline {\bf S}_j = 
	\widehat \alpha_j^S \overline {\bf U}_j + \Delta t { \widehat \alpha_j   } 
	\widehat \alpha_j^S \frac{\sqrt{2 \overline e_j}}{h_j} \Big( 0,
	\overline \rho_j, 
	\overline m_j \Big)^\top.
	\end{align*}
	The condition \eqref{eq:CFL1F} implies
	$
	\left| \Delta t { \widehat \alpha_j   } 
	\widehat \alpha_j^S \frac{\sqrt{2e_j}}{h_j} \right| \frac{1}{ \sqrt{2 \overline e_j} } \le \widehat \alpha_j^S, 
	$
	which leads to, based on Lemma \ref{lem:UcontrolS}, that 
	${\bf W}_S \in \overline G$. With the aid of Lemma \ref{lem:1DF}, we obtain 
	${\bf W}_F \in G$ under the condition \eqref{eq:CFL1F}. 
	Finally, the combination of \eqref{eq:proofwkl143} and Lemma \ref{lem3} yields 
	\eqref{eq:1Dorder1}. 
\end{proof}

Theorem \ref{thm:1D1st} indicates that the first-order ($k = 0$)
well-balanced DG method \eqref{eq:1DDGweak}, coupled with a forward Euler time discretization, is 
positivity-preserving under the CFL-type condition \eqref{eq:CFL1F}.


\subsection{Positivity-preserving high-order well-balanced DG schemes}

When the polynomial degree $k \ge 1$, the
high-order well-balanced DG schemes \eqref{eq:1DDGweak} are not positivity-preserving in general. Fortunately, a weak positivity property can be proven for the schemes \eqref{eq:1DDGweak}; see Theorem \ref{thm:1Dhigh}. 
As we will see, such weak positivity is crucial and implies that 
a simple limiter can enforce the positivity-preserving property without losing conservation and high-order accuracy.  

\subsubsection{Theoretical positivity-preserving analysis}

Let $\{ \widehat{x}_{j}^{(\nu)} \}_{1\le \nu \le L}$ be the Gauss-Lobatto nodes transformed into 
the interval $I_j$, and $\{\widehat \omega_\nu\}_{1\le \nu \le L}$ be the associated quadrature weights satisfying $\sum_{\nu=1}^L \widehat \omega_\nu = 1$ and $\widehat \omega_1 = \widehat \omega_L =\frac{1}{L(L-1)}$, 
with $L\ge (k+3)/2$ to ensure that the algebraic precision of the corresponding quadrature
rule is at least $k$. For each cell $I_j$, we define the point set
\begin{equation}\label{eq:1DpointSet}
\mathbb S_j := \{ \widehat x_{j}^{(\nu)} \}_{\nu=1}^L \cup \{ x_{j}^{(\mu)} \}_{\mu=1}^N,
\end{equation}
and define $\widetilde \alpha_j$ as
\begin{align}\nonumber   
& \widetilde \alpha_j := \widetilde \alpha_j^F + \widetilde \alpha_j^S + \overline \alpha_j^S,
\qquad \widetilde \alpha_j^F := 2~ {\rm max} \left\{ 
\frac{ p^{e,\star}_{j + \frac12}   } {   p_h^e( x_{j + \frac12}^-  )   } , 
\frac{ p^{e,\star}_{j - \frac12}   } {   p_h^e( x_{j - \frac12}^+  )   } \right\}
\max_{ {\bf U} \in \{  {\bf U}_{j-\frac12}^\pm , {\bf U}_{j+\frac12}^\pm \}  } \alpha_{\max} ({\bf U}  )  ,
\\ \label{eq:tildealphajs}
& \widetilde \alpha_j^S := \widehat \omega_1 h_j \max_{1\le \mu \le N} \left\{  \frac{ \left| \big(p_h^e\big)_x (   x_j^{(\mu)}  ) \right|
}{ \rho_h^e( x_j^{(\mu)} ) \sqrt{2 e_h( x_j^{(\mu)} ) } } \right\},
\qquad  \overline \alpha_j^S := \widehat \omega_1 \frac{  \left| \jump{p_h^e}_{j+\frac12 } + \jump{p_h^e}_{j-\frac12 }   \right|  }{ 2 \overline \rho_j^e \sqrt{2 \overline e_j } },
\end{align}
with $\jump{p_h^e}_{j+\frac12 }:=p_h^e( x_{j+\frac12}^+ ) -p_h^e( x_{j+\frac12}^- ) $, 
where $\widetilde \alpha_j^S + \overline \alpha_j^S = {\mathcal O}(h_j) $ 
and $ {\rm max} \bigg\{ 
\frac{ p^{e,\star}_{j + \frac12}   } {   p_h^e( x_{j + \frac12}^-  )   } , 
\frac{ p^{e,\star}_{j - \frac12}   } {   p_h^e( x_{j - \frac12}^+  )   } \bigg\} = 1 + {\mathcal O}(h^{k+1})$ for smooth $p^e(x)$. 
Then we have the following sufficient condition for the high-order scheme \eqref{semi_discrete} to be positivity-preserving. 

\begin{theorem}\label{thm:1Dhigh}
	Assume that the projected stationary hydrostatic solutions satisfy
	\begin{equation}\label{PPcondition:depe}
	\rho^e_h( x )>0, \quad p^e_h(x)>0, \quad     \forall x \in \mathbb S_j,~~\forall j,
	\end{equation}
	and the numerical solution ${\bf U}_h$ satisfies 
	\begin{equation}\label{PPcondition}
	{\bf U}_h ( x ) \in G,  \quad     \forall x \in \mathbb S_j,~~\forall j,
	\end{equation} 
	then we have the weak positivity property 
	\begin{equation}\label{1DPPhigh}
	\overline {\bf U}_j + \Delta t {\bf L}_j ( {\bf U}_h  ) \in G,~~\forall j,
	\end{equation}
	under the CFL-type condition 
	\begin{equation}\label{CFL2high}
	\widetilde \alpha_j \Delta t \le \widehat \omega_1 h_j. 
	\end{equation}
\end{theorem}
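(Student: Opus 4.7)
The plan is to decompose the cell-average update as a positive combination
\begin{equation*}
\overline{\bf U}_j + \Delta t\,{\bf L}_j({\bf U}_h) \;=\; \frac{\widetilde\alpha_j^F}{\widetilde\alpha_j}\,{\bf W}_F \;+\; \frac{\widetilde\alpha_j^S}{\widetilde\alpha_j}\,{\bf W}_S^{(1)} \;+\; \frac{\overline\alpha_j^S}{\widetilde\alpha_j}\,{\bf W}_S^{(2)}
\end{equation*}
of three auxiliary states, to prove ${\bf W}_F\in G$ and ${\bf W}_S^{(i)}\in\overline G$, and to conclude \eqref{1DPPhigh} via Lemma~\ref{lem3}. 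These pieces will absorb, respectively, the flux difference, the interior Gauss-quadrature part of $\Delta t\,\overline{\bf S}_j$, and its ``boundary jump'' part. To produce the matching source splitting $\overline{\bf S}_j=\overline{\bf S}_j^{(1)}+\overline{\bf S}_j^{(2)}$, I would first invoke exactness of the Gauss rule on the degree-$(k-1)$ polynomial $(p_h^e)_x$, then use the identity $p^{e,\star}_{j\pm\frac12}-p_h^e(x_{j\pm\frac12}^\mp) = \pm\tfrac12\jump{p_h^e}_{j\pm\frac12}$ to isolate $\overline{\bf S}_j^{(2)}=\frac{\jump{p_h^e}_{j+\frac12}+\jump{p_h^e}_{j-\frac12}}{2\,h_j\,\overline\rho_j^e}\bigl(0,\,\overline\rho_j,\,\overline m_j\bigr)^\top$.

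For ${\bf W}_F$, I would expand $\overline{\bf U}_j=\widehat\omega_1\bigl({\bf U}_{j+\frac12}^-+{\bf U}_{j-\frac12}^+\bigr)+\Pi_j^{\rm int}$ via Gauss--Lobatto quadrature, with $\Pi_j^{\rm int}\in\overline G$ by Lemma~\ref{lem3}. Writing $\tilde\zeta^\mp_{j\pm\frac12}:=p^{e,\star}_{j\pm\frac12}/p_h^e(x_{j\pm\frac12}^\mp)$ and exploiting the homogeneity ${\bf F}(\zeta{\bf U})=\zeta{\bf F}({\bf U})$ of the Euler flux together with Lemma~\ref{lem2}, I would introduce the common fictitious flux $\widehat{\bf F}^\ast:={\bf F}^{hllc}\bigl(\tilde\zeta_{j-\frac12}^+{\bf U}_{j-\frac12}^+,\,\tilde\zeta_{j+\frac12}^-{\bf U}_{j+\frac12}^-\bigr)$ and split the boundary contribution into two brackets, rescaled by $1/\tilde\zeta_{j+\frac12}^-$ and $1/\tilde\zeta_{j-\frac12}^+$, that each match the canonical form of Lemma~\ref{HLLC_std_euler} with middle states $\tilde\zeta_{j+\frac12}^-{\bf U}_{j+\frac12}^-$ and $\tilde\zeta_{j-\frac12}^+{\bf U}_{j-\frac12}^+$; the choice of $\widehat{\bf F}^\ast$ makes it cancel across the sum. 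The factor $2$ and the $\max$ in the definition of $\widetilde\alpha_j^F$ are calibrated so that the CFL \eqref{CFL2high} implies the $\tfrac12$-condition \eqref{HLLCcfl} required by Lemma~\ref{HLLC_std_euler}.

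For ${\bf W}_S^{(1)}$ and ${\bf W}_S^{(2)}$ I would invoke Lemma~\ref{lem:UcontrolS}. For ${\bf W}_S^{(1)}$ I instead expand $\overline{\bf U}_j=\sum_\mu\omega_\mu{\bf U}_h(x_j^{(\mu)})$ (an exact Gauss decomposition), pair the $\mu$-th summand with the $\mu$-th pointwise source contribution, and apply Lemma~\ref{lem:UcontrolS} at each $\mu$ with $\lambda=\widetilde\alpha_j^S/\widetilde\alpha_j$, $\delta=\Delta t$, and ${\bf a}=(p_h^e)_x(x_j^{(\mu)})/\rho_h^e(x_j^{(\mu)})$; the required bound falls out from the definition of $\widetilde\alpha_j^S$ and the CFL. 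For ${\bf W}_S^{(2)}$ I apply Lemma~\ref{lem:UcontrolS} directly to $\overline{\bf U}_j\in G$ with ${\bf a}$ proportional to the pressure-jump sum, and the CFL again matches by construction of $\overline\alpha_j^S$. Crucially, the structure $(0,\rho\,{\bf a},{\bf m}\cdot{\bf a})^\top$ required by Lemma~\ref{lem:UcontrolS}---at both pointwise and cell-averaged levels---is precisely what the consistent momentum/energy source discretization of Remark~\ref{rem:WBPPconsideration1} supplies.

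The hard part will be the ${\bf W}_F$ step: the well-balanced modification forces distinct pressure-rescalings $\tilde\zeta_{j+\frac12}^-\neq\tilde\zeta_{j-\frac12}^+$ on the two traces of the same cell $I_j$, so a naive Zhang--Shu decomposition does not present them as a common ``middle state'' for Lemma~\ref{HLLC_std_euler}. The asymmetric rescaling together with the common auxiliary flux $\widehat{\bf F}^\ast$---chosen precisely as the HLLC flux of the two scaled traces themselves---is the key structural device that allows the existing HLLC positivity lemmas to apply.
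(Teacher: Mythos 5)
Your proposal is correct and follows essentially the same route as the paper's proof: the Gauss--Lobatto splitting of $\overline{\bf U}_j$ to expose the two traces, the insertion of the auxiliary flux ${\bf F}^{hllc}\bigl(\tilde\zeta_{j-\frac12}^+{\bf U}_{j-\frac12}^+,\tilde\zeta_{j+\frac12}^-{\bf U}_{j+\frac12}^-\bigr)$ with the $1/\tilde\zeta$ rescalings to fit Lemma~\ref{HLLC_std_euler}, and the source splitting into Gauss-point and jump parts each handled by Lemma~\ref{lem:UcontrolS} are exactly the paper's steps. The only (cosmetic) difference is that you fix the convex weights $\widetilde\alpha_j^F/\widetilde\alpha_j$, $\widetilde\alpha_j^S/\widetilde\alpha_j$, $\overline\alpha_j^S/\widetilde\alpha_j$ from the outset, whereas the paper carries free parameters $\eta,\lambda$ and recovers these same weights by optimizing the admissible time-step set at the end.
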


\begin{proof}
	The exactness of the $L$-point Gauss-Lobatto quadrature rule for polynomials of degree up to $k$ implies 
	$$
	\overline {\bf U}_j = \frac{1}{h_j} \int_{I_j} {\bf U}_h( x ) {\rm d} x = 
	\sum_{\nu=1}^L \widehat \omega_\nu {\bf U}_h( \widehat x_j^{(\nu)} ),
	$$
	with $\widehat x_j^{(1)}=x_{j-\frac12}$, $\widehat x_j^{(L)}=x_{j+\frac12}$ and $\widehat \omega_1 = \widehat \omega_L$. We consider, for an arbitrary parameter $\eta \in (0,1]$, the following decomposition 
	\begin{align} \nonumber
	&  \overline {\bf U}_j + \Delta t {\bf L}_j ( {\bf U}_h  ) = 
	\eta 
	\overline {\bf U}_j  -\frac{\Delta t}{h_j} \left( 
	\widehat{\bf F}_{j+\frac12}  - \widehat{\bf F}_{j-\frac12}
	\right)
	+ (1-\eta)  \overline {\bf U}_j  + \Delta t \overline {\bf S}_j
	\\  \nonumber
	& \quad = \eta
	\sum_{\nu=1}^L \widehat \omega_\nu {\bf U}_h( \widehat x_j^{(\nu)} ) 
	-\frac{\Delta t}{h_j} \left( 
	\widehat{\bf F}_{j+\frac12}  - \widehat{\bf F}_{j-\frac12}
	\right) + (1-\eta)  \overline {\bf U}_j  + \Delta t \overline {\bf S}_j
	\\ \nonumber
	& \quad =    
	\left[ \eta \sum_{\nu=2}^{L-1} \widehat \omega_\nu {\bf U}_h( \widehat x_j^{(\nu)} ) \right] +
	\left[ \eta \widehat \omega_1  \big( {\bf U}_{j-\frac12}^+ +  {\bf U}_{j+\frac12}^- \big) 
	-\frac{\Delta t}{h_j} \left( 
	\widehat{\bf F}_{j+\frac12}  - \widehat{\bf F}_{j-\frac12}
	\right) \right]
	+
	\left[
	(1-\eta) \overline {\bf U}_j  + \Delta t \overline {\bf S}_j
	\right]
	\\ \label{eq:wklproof3132}
	& \quad =: {\bf W}_1 + {\bf W}_2 + {\bf W}_3,
	\end{align}
	where ${\bf W}_1 \in G \cup \{ {\bf 0}\} \subset \overline G$ according to Lemma \ref{lem3}. 
	The parameter $\eta$ could be simply taken as $1/2$, but this will lead to a restrictive condition for $\Delta t$. 
	In the following we would like to determine a suitable parameter $\eta$ in $(0,1]$ such that ${\bf W}_2 \in G$ and ${\bf W}_3 \in \overline G$.

Let us first consider
${\bf W}_2$ and reformulate it as follows 
\begin{align}
\nonumber 
~~~ & {\bf W}_2  = \eta \widehat \omega_1   {\bf U}_{j-\frac12}^+ +  \eta \widehat \omega_1  {\bf U}_{j+\frac12}^- 
\\ \nonumber
& \quad 
- \frac{\Delta t}{h_j} 
\left[
{\bf F}^{hllc} \left( \frac{ p^{e,\star}_{j+\frac12}  }{ p_h^e( x_{j+\frac12}^- ) }  {\bf U}_{j+\frac12}^-, \frac{ p^{e,\star}_{j+\frac12}  }{ p_h^e( x_{j+\frac12}^+ ) }   {\bf U}_{j+\frac12}^+ \right)
- {\bf F}^{hllc} \left( \frac{ p^{e,\star}_{j-\frac12}  }{ p_h^e( x_{j-\frac12}^- ) }  {\bf U}_{j-\frac12}^-, \frac{ p^{e,\star}_{j-\frac12}  }{ p_h^e( x_{j-\frac12}^+ ) }   {\bf U}_{j-\frac12}^+ \right)
\right]
\\ \nonumber
&  = 
\eta \widehat \omega_1   {\bf U}_{j+\frac12}^-
- \frac{\Delta t}{h_j} 
\left[
{\bf F}^{hllc} \left( \frac{ p^{e,\star}_{j+\frac12}  }{ p_h^e( x_{j+\frac12}^- ) }  {\bf U}_{j+\frac12}^-, \frac{ p^{e,\star}_{j+\frac12}  }{ p_h^e( x_{j+\frac12}^+ ) }   {\bf U}_{j+\frac12}^+ \right)
- {\bf F}^{hllc} \left( \frac{ p^{e,\star}_{j-\frac12}  }{ p_h^e( x_{j-\frac12}^+ ) }   {\bf U}_{j-\frac12}^+ , \frac{ p^{e,\star}_{j+\frac12}  }{ p_h^e( x_{j+\frac12}^- ) }  {\bf U}_{j+\frac12}^- \right)
\right]
\\ \nonumber
& + 
\eta \widehat \omega_1 
{\bf U}_{j-\frac12}^+
- \frac{\Delta t}{h_j} 
\left[
{\bf F}^{hllc} \left( \frac{ p^{e,\star}_{j-\frac12}  }{ p_h^e( x_{j-\frac12}^+ ) }   {\bf U}_{j-\frac12}^+ , \frac{ p^{e,\star}_{j+\frac12}  }{ p_h^e( x_{j+\frac12}^- ) }  {\bf U}_{j+\frac12}^- \right)
- {\bf F}^{hllc} \left( \frac{ p^{e,\star}_{j-\frac12}  }{ p_h^e( x_{j-\frac12}^- ) }  {\bf U}_{j-\frac12}^-, \frac{ p^{e,\star}_{j-\frac12}  }{ p_h^e( x_{j-\frac12}^+ ) }   {\bf U}_{j-\frac12}^+ \right)
\right]
\\ \label{eq:wklproof3321}
&  =: \eta \widehat \omega_1 \frac{ p_h^e( x_{j+\frac12}^- ) } { p^{e,\star}_{j+\frac12}  }
{\bf W}_2^+ + \eta \widehat \omega_1 \frac{ p_h^e( x_{j-\frac12}^+ ) } { p^{e,\star}_{j-\frac12}  } {\bf W}_2^-,
\end{align}
where  
\begin{align*}
\begin{split}
{\bf W}_2^+ &= \frac{ p^{e,\star}_{j+\frac12}  }{ p_h^e( x_{j+\frac12}^- ) } {\bf U}_{j+\frac12}^-
- \frac{\Delta t  { p^{e,\star}_{j+\frac12}  }}{ \eta \widehat \omega_1 h_j { p_h^e( x_{j+\frac12}^- ) }    } 
\\
& \quad \times 
\left[
{\bf F}^{hllc} \left( \frac{ p^{e,\star}_{j+\frac12}  }{ p_h^e( x_{j+\frac12}^- ) }  {\bf U}_{j+\frac12}^-, \frac{ p^{e,\star}_{j+\frac12}  }{ p_h^e( x_{j+\frac12}^+ ) }   {\bf U}_{j+\frac12}^+ \right)
- {\bf F}^{hllc} \left( \frac{ p^{e,\star}_{j-\frac12}  }{ p_h^e( x_{j-\frac12}^+ ) }   {\bf U}_{j-\frac12}^+ , \frac{ p^{e,\star}_{j+\frac12}  }{ p_h^e( x_{j+\frac12}^- ) }  {\bf U}_{j+\frac12}^- \right)
\right],
\end{split}
\\
\begin{split}
{\bf W}_2^- & = \frac{ p^{e,\star}_{j-\frac12}  }{ p_h^e( x_{j-\frac12}^+ ) }  {\bf U}_{j-\frac12}^+
- \frac{\Delta t  { p^{e,\star}_{j-\frac12}  }}{ \eta \widehat \omega_1 h_j { p_h^e( x_{j-\frac12}^+ ) }  } 
\\
& \quad \times 
\left[
{\bf F}^{hllc} \left( \frac{ p^{e,\star}_{j-\frac12}  }{ p_h^e( x_{j-\frac12}^+ ) }   {\bf U}_{j-\frac12}^+ , \frac{ p^{e,\star}_{j+\frac12}  }{ p_h^e( x_{j+\frac12}^- ) }  {\bf U}_{j+\frac12}^- \right)
- {\bf F}^{hllc} \left( \frac{ p^{e,\star}_{j-\frac12}  }{ p_h^e( x_{j-\frac12}^- ) }  {\bf U}_{j-\frac12}^-, \frac{ p^{e,\star}_{j-\frac12}  }{ p_h^e( x_{j-\frac12}^+ ) }   {\bf U}_{j-\frac12}^+ \right)
\right].
\end{split}
\end{align*}
Thanks to Lemma \ref{HLLC_std_euler}, we have ${\bf W}_2^\pm  \in G$, if 
$$
\frac{  \Delta t p^{e,\star}_{j\pm \frac12}   } { \eta \widehat w_1 h_j   p_h^e( x_{j\pm \frac12}^\mp  )   }   \max_{ {\bf U} \in \{  {\bf U}_{j-\frac12}^-, {\bf U}_{j-\frac12}^+ , {\bf U}_{j+\frac12}^-, {\bf U}_{j+\frac12}^+ \}  } \alpha_{\max} ({\bf U}  ) \le \frac12,
$$
or equivalently
\begin{equation}\label{eq:PP1}
\Delta t  \widetilde \alpha_j^F \le \eta \widehat w_1 h_j.
\end{equation}
By applying Lemma \ref{lem3} on \eqref{eq:wklproof3321}, we obtain ${\bf W}_2\in G$ under the condition \eqref{eq:PP1}. 

Next, the term $ {\bf W}_3$ is analyzed. 
Note that, for an arbitrary parameter $\lambda \in [0,1]$,  we have
\begin{align*}
(1-\eta) \overline {m}_j + \Delta t \overline {S}_j^{[2]} & = (1-\eta)
\overline {m}_j + \Delta t \sum_{\mu=1}^N \omega_\mu \left( \frac{ \rho_h( x_j^{(\mu)} ) }{ \rho_h^e( x_j^{(\mu)} ) } 
- \frac{ \overline{\rho}_j  }{ \overline\rho^e_j } \right) (p_h^e)_x (   x_j^{(\mu)}  ) 
+ \frac{\Delta t}{h_j} \frac{ \overline{\rho}_j  }{ \overline\rho^e_j } 
\left( p^{e,\star}_{j+\frac12} - p^{e,\star}_{j-\frac12}  
\right)
\\
\begin{split}
& = (1-\eta) \left[ (1- \lambda ) \overline {m}_j + \lambda \sum_{\mu=1}^N \omega_\mu m_h( x_j^{(\mu)} ) \right] + \Delta t \sum_{\mu=1}^N \omega_\mu \frac{ \rho_h( x_j^{(\mu)} ) }{ \rho_h^e( x_j^{(\mu)} ) } (p_h^e)_x (   x_j^{(\mu)}  )
\\
& \quad + \frac{\Delta t}{h_j} \frac{ \overline{\rho}_j  }{ \overline\rho^e_j } 
\left( p^{e,\star}_{j+\frac12} - p^{e,\star}_{j-\frac12}  - \int_{I_j}
(p_h^e)_x {\rm d} x
\right)
\end{split}
\\
\begin{split}
&=  (1-\eta) \lambda \sum_{\mu=1}^N \omega_\mu m_h( x_j^{(\mu)} ) + \Delta t \sum_{\mu=1}^N \omega_\mu \frac{ \rho_h( x_j^{(\mu)} ) }{ \rho_h^e( x_j^{(\mu)} ) } (p_h^e)_x (   x_j^{(\mu)}  )
\\
& \quad + (1-\eta)(1 -\lambda) \overline {m}_j + \frac{\Delta t}{h_j} \frac{ \overline{\rho}_j  }{ \overline\rho^e_j } \frac12 \left(
\jump{p_h^e}_{j+\frac12 } + \jump{p_h^e}_{j-\frac12 }  
\right),
\end{split}
\end{align*}
and similarly, 
\begin{align*}
(1-\eta) \overline {E}_j + \Delta t \overline {S}_j^{[3]} & = 
(1-\eta) \lambda \sum_{\mu=1}^N \omega_\mu E_h( x_j^{(\mu)} ) + \Delta t \sum_{\mu=1}^N \omega_\mu \frac{ m_h( x_j^{(\mu)} ) }{ \rho_h^e( x_j^{(\mu)} ) } (p_h^e)_x (   x_j^{(\mu)}  )
\\
& \quad + (1-\eta)(1 -\lambda) \overline {E}_j + \frac{\Delta t}{h_j} \frac{ \overline{m}_j  }{ \overline\rho^e_j } \frac12 \left(
\jump{p_h^e}_{j+\frac12 } + \jump{p_h^e}_{j-\frac12 }  
\right).
\end{align*}
Therefore, we have
\begin{align}\label{eq:PP22}
&{\bf W}_3  = 
\sum_{\mu=1}^N \omega_\mu {\bf W}_3^{(\mu)} 
+ \overline {\bf W}_3,
\\
\label{eq:W3mu}
&{\bf W}_3^{(\mu)}   := 
(1-\eta) \lambda {\bf U}_h( x_j^{(\mu)} ) +  \Delta t \frac{ (p_h^e)_x (   x_j^{(\mu)}  )
}{ \rho_h^e( x_j^{(\mu)} ) } 
\Big(0,~\rho_h( x_j^{(\mu)} ),~m_h( x_j^{(\mu)} ) \Big)^\top, 
\\ \label{eq:barW3}
&  \overline {\bf W}_3 := (1-\eta) (1 -\lambda) \overline {\bf U}_j + \Delta t \frac{  \jump{p_h^e}_{j+\frac12 } + \jump{p_h^e}_{j-\frac12 }    }{ 2 h_j \overline\rho^e_j } 
\Big(0,~\overline{\rho}_j,~\overline{m}_j \Big)^\top.
\end{align}
Thanks to Lemma \ref{lem:UcontrolS}, we have $\overline {\bf W}_3 \in G$ and  $ {\bf W}_3^{(\mu)}   \in \overline G$ for all 
$\mu$, if
$$
\Delta t \max_{1\le \mu \le N} \left\{  \frac{ \left| \big(p_h^e\big)_x (   x_j^{(\mu)}  ) \right|
}{ \rho_h^e( x_j^{(\mu)} ) \sqrt{2 e_h( x_j^{(\mu)} ) } } \right\} \le  (1-\eta) \lambda,
\qquad   \Delta t  \frac{  \left| \jump{p_h^e}_{j+\frac12 } + \jump{p_h^e}_{j-\frac12 }   \right|  }{ 2 h_j  \overline\rho^e_j \sqrt{2 \overline e_j } } \le (1-\eta) (1 -\lambda),
$$
or equivalently
\begin{equation}\label{eq:PP66}
\Delta t \widetilde \alpha_j^S \le \widehat \omega_1 h_j (1-\eta) \lambda, \qquad  
\Delta t \overline \alpha_j^S \le \widehat \omega_1 h_j (1-\eta) (1 -\lambda) .
\end{equation}
By applying Lemma \ref{lem3} on \eqref{eq:PP22}, we obtain ${\bf W}_3 \in \overline G$ under the condition \eqref{eq:PP66}. 

Combining these results, we conclude that if $\Delta t$ satisfies 
\begin{equation}\label{eq:Cond11}
\Delta t \in  \Omega_{\eta,\lambda}^{(j)} := 
\Big \{ \tau \in \mathbb R^+:~
\tau \widetilde \alpha_j^F \le \eta \widehat w_1 h_j, ~ 
\tau  \widetilde \alpha_j^S \le \widehat \omega_1 h_j (1-\eta) \lambda, ~
\tau \overline \alpha_j^S \le \widehat \omega_1 h_j (1-\eta) (1 -\lambda)  \Big \}, 
\end{equation}
then 
$${\bf W}_1 \in \overline G,~ {\bf W}_2 \in G,~ {\bf W}_3 \in \overline G,$$ 
which implies \eqref{1DPPhigh}, i.e., $\overline {\bf U}_j + \Delta t {\bf L}_j ( {\bf U}_h  ) = \sum_{i=1}^3 {\bf W}_i \in G$,
following Lemma \ref{lem3}. 
Since the two parameters $\eta$ and $\lambda$ can be chosen arbitrarily in this proof, 
we would like to specify the ``best'' $\eta$ and $\lambda$ that maximize $\sup \Omega_{\eta,\lambda}^{(j)}=: g(\eta,\lambda)$. 
Solving such an optimization problem gives 
$$
\max_{ \eta \in (0,1],\lambda \in [0,1] } g(\eta,\lambda) = g(\eta_*,\lambda_*) = 
\frac{\widehat \omega_1 h_j}{  \widetilde \alpha_j^F + \widetilde \alpha_j^S + \overline \alpha_j^S} = \frac{\widehat \omega_1 h_j}{ \widetilde \alpha_j },
$$
which is reached at  
$\eta_* = { \widetilde \alpha_j^F }/{ \widetilde  \alpha_j },$
$\lambda_* = \frac{ \widetilde \alpha^S }{ \widetilde \alpha^S + \overline \alpha^S }. $
Therefore the condition \eqref{eq:Cond11} reduces to 
$$\Delta t \le g(\eta_*,\lambda_*),$$
which is equivalent to \eqref{CFL2high}. This finishes the proof. 
\end{proof}

Theorem \ref{thm:1Dhigh} gives a sufficient condition for the proposed high-order well-balanced
DG schemes \eqref{semi_discrete} to ensure that the cell-averages $\overline {\bf U}_j$ in $G$, when combined with the forward Euler time
discretization. Since any high-order SSP-RK time discretization can be written as a convex combination of
the forward Euler method, the same conclusion also holds when SSP-RK time discretization is used. 

\begin{remark}\label{rem:cellave}
The well-balanced source term reformulation \eqref{eq:Sreformulation} involves the cell average $\{\overline \rho_j, \overline \rho^e_j\}$, 
instead of the midpoint values $\{\rho (x_j),  \rho^e (x_j) \}$ used in \cite{LiXingWBDG2016}, which also works for the purpose of well-balanced property.
However, in the latter case, the vector $\overline {\bf W}_3$ in  \eqref{eq:barW3} would become 
$$
\overline {\bf W}_3 := (1-\eta) (1 -\lambda) \overline {\bf U}_j + \Delta t \frac{  \jump{p_h^e}_{j+\frac12 } + \jump{p_h^e}_{j-\frac12 }    }{ 2 h_j \rho^e_h(x_j) } 
\left(0,
{\rho}_h( x_j ),
{m}_h(x_j) \right)^\top,
$$
and more restrictive condition on $\Delta t$ is required to ensure $\overline {\bf W}_3 \in 
\overline G$, because in general
${\rho}_h( x_j )$ and ${m}_h(x_j)$ are not necessarily components of $\overline {\bf U}_j$. 
\end{remark}

\subsubsection{Positivity-preserving limiter}
A simple positivity-preserving limiter (cf.~\cite{zhang2010b,wang2012robust}) can be applied to enforce the 
condition \eqref{PPcondition}. Denote 
\begin{align*}
\overline{\mathbb G}_h^k:= \left\{ {\bf u} \in [{\mathbb V}_h^k]^3:~~\frac{1}{h_j} \int_{I_j} {\bf u} (x) {\rm d}x \in G,~ \forall j \right\},
\quad {\mathbb G}_h^k:= \left\{ {\bf u} \in [{\mathbb V}_h^k]^3:~~{\bf u}\big|_{I_j} (x) \in G,~\forall x \in \mathbb S_j,\forall j \right\},
\end{align*}
where $\mathbb S_j$ is defined in \eqref{eq:1DpointSet}. 
For any $ {\bf U}_h \in  \overline{\mathbb G}_h^k$ with ${\bf U}_h \big|_{I_j} =: {\bf U}_j (x) $, we define 
the positivity-preserving limiting operator ${\bf \Pi}_h: \overline{\mathbb G}_h^k \to {\mathbb G}_h^k$ as 
\begin{equation}\label{eq:limiter}
{\mathbf \Pi}_h {\bf U}_h 
\big|_{I_j} = \theta_j^{(2)} ( \widehat {\bf U}_j(x) -\overline { \bf  U }_j ) + \overline { \bf  U }_j, \quad  \forall j,
\end{equation}
with 
$\theta_j^{(2)} = \min\bigg\{1, \frac{ {\mathcal G} ( \overline {\bf U}_j ) - \epsilon_2 }{ {\mathcal G} ( \overline {\bf U}_j ) - \min_{ x \in {\mathbb S}_j } {\mathcal G} \big( \widehat {\bf U}_j (x) \big) }\bigg\}$, ${\mathcal G}({\bf U})$ defined in \eqref{eq:DefG}, 
$\widehat{\bf U}_j( x ) := ( \widehat \rho_j( x ), {\bf m}_j(x), E_j(x)  )^\top$, and 
\begin{equation}\label{eq:limitertheta}
\widehat \rho_j(x) = \theta_j^{(1)} ( \rho_j( x) - \overline \rho_j ) + \overline  \rho_j,\quad \theta_j^{(1)} = \min\left\{1, \frac{\overline \rho_j - \epsilon_1}{ \bar \rho_j + \min_{ x \in {\mathbb S}_j} \rho_j(x ) }\right\}.
\end{equation}
Here $\epsilon_1$ and $\epsilon_2$ are two sufficiently small positive numbers, introduced to avoid the effect of the round-error.
In the computation, one can take $\epsilon_1= \min\{10^{-13},\overline \rho_j \}$ and $\epsilon_2=\min\{ 10^{-13},{\mathcal G}(\overline{\bf U}_j)\}$. 
Note that the positivity-preserving limiter keeps the mass conservation
$ \int_{I_j}  {\mathbf \Pi}_h  ( {\bf u} ) {\rm d} x =   \int_{I_j}  {\bf u} {\rm d} x,~\forall {\bf u} \in \overline{\mathbb G}_h^k $ 
and does not destroy the high-order accuracy; see \cite{zhang2010,zhang2010b,ZHANG2017301} for details. 

Define the initial numerical solutions as ${\bf U}_h^0 (x) := {\mathbf \Pi}_h {\bf P}_h {\bf U} (x,0) $. 
For the well-balanced DG schemes \eqref{semi_discrete} coupled with an SSP-RK method, if the positivity-preserving limiter \eqref{eq:limiter} is used at each RK stage, 
the resulting fully discrete DG methods are positivity-preserving, namely the numerical solutions ${\bf U}^n_h$ always satisfy \eqref{PPcondition}, 
i.e., ${\bf U}^n_h\in{\mathbb G}_h^k$. 	
For example, when the third-order method \eqref{SSP1} is adopted, the proposed high-order positivity-preserving well-balanced DG schemes of the form
\begin{equation}\label{SSP2}
\begin{aligned}
&{\bf U}^{(1)}_h = {\mathbf \Pi}_h \left[ {\bf U}_h^n + \Delta t {\bf L} (   {\bf U}_h^n )\right],
\\
&{\bf U}^{(2)}_h = {\mathbf \Pi}_h \left[ \frac34  {\bf U}_h^n + \frac14 \Big(  {\bf U}^{(1)}_h + \Delta t {\bf L} (  {\bf U}^{(1)}_h ) \Big) \right],
\\
&{\bf U}_h^{n+1} = {\mathbf \Pi}_h \left[ \frac13  {\bf U}_h^n + \frac23 \Big(  {\bf U}^{(2)}_h + \Delta t {\bf L} (  {\bf U}^{(2)}_h ) \Big) \right],
\end{aligned}
\end{equation}
are positivity-preserving under the CFL-type condition \eqref{CFL2high}. 

\begin{remark}
If the projected stationary hydrostatic solutions $\rho^e_h$ and $p^e_h$ do not satisfy the condition \eqref{PPcondition:depe} in Theorem \ref{thm:1Dhigh}, 
we can redefine $\rho^e_h,p^e_h \in \mathbb V_h^k$ as 
\begin{equation}\label{eq:hydrodstaticPP}
\left( \rho^e_h(x), 0, \frac{ p^e_h (x) }{\gamma -1} \right)^\top := {\mathbf \Pi}_h {\bf P}_h  \left( \rho^e(x), 0, \frac{ p^e(x) }{\gamma -1} \right)^\top,
\end{equation}
where ${\mathbf P}_h$ denotes the $L^2$--projection onto the space $[\mathbb V_h^k]^3$. 
One can verify that $\rho^e_h$ and $p^e_h$ defined by \eqref{eq:hydrodstaticPP} always satisfy \eqref{PPcondition:depe}. In practice, 
if the exact stationary hydrostatic solutions $\rho^e$ and $p^e$ do not involve low density or low pressure, the operator ${\mathbf \Pi}_h$ in 
\eqref{eq:hydrodstaticPP} would not be turned on. We remark that the positivity-preserving DG schemes also retain the well-balanced property, if 
\eqref{eq:hydrodstaticPP} is used.
\end{remark}

\begin{remark}\label{rem:CFL}
Note that the CFL constraint \eqref{CFL2high} is sufficient rather than necessary for preserving positivity. Also, for a Runge-Kutta
time discretization, to enforce the CFL condition rigorously, we need to obtain an accurate
estimation of $\widetilde \alpha_j$ for all the stages of Runge-Kutta based only on the numerical solution at 
time level $n$, which is very difficult in most of test examples. An efficient implementation (cf.~\cite{xing2013positivity})
may be, if a preliminary calculation to the next time step produces negative density or pressure, we restart
the computation from the time step $n$ with half of the time step size. 
Our numerical tests demonstrate that the proposed methods always work robustly with a CFL number slightly smaller than 
$\widehat \omega_1$ and the restart is yet never encountered. 
\end{remark}

\section{Positivity-preserving well-balanced DG methods in multiple  dimensions}\label{sec:method2D}
In this section, we extend the proposed 1D positivity-preserving well-balanced DG methods to the multidimensional cases. 
For the sake of clarity, we shall focus on the two-dimensional (2D) case with $d=2$ in the remainder of this section, and the extension of our numerical methods and 
analyses to the three-dimensional case ($d=3$) follows similar lines.
%

\subsection{Well-balanced DG discretization}
Assume that the 2D spatial domain $\Omega$ is partitioned into a mesh ${\mathcal T}_h$, which may be unstructured 
and consist of 
polygonal cells. Throughout this section, the lower-case $k$ is used to denote the DG polynomial degree, while the 
capital $K$ always represents a cell in ${\mathcal T}_h$.
Denote the DG numerical solutions as ${\bf U}_h({\bf x},t)$, and for any $t \in (0,T_f]$, each component of ${\bf U}_h$ belongs to the finite dimensional space of discontinuous piecewise polynomial functions, $\mathbb V_h^k$, defined by 
$$
\mathbb V_h^k = \left\{ u({\bf x}) \in L^2(\Omega):~ 
u({\bf x}) \big|_{K} \in \mathbb P^k(K),~ \forall  K \in {\mathcal T}_h  \right\},
$$
where ${\mathbb P}^{k} (K)$ is the space of polynomials of total degree up to $k$ in cell $K$. 
The semi-discrete DG methods for \eqref{eq:dD} are given as follows: 
for any test function $v \in \mathbb V_h^k$, ${\bf U}_h$ is computed by 
\begin{equation}\label{eq:2DDG}
\int_{K} ( {\bf U}_h )_t   v    {\rm d} {\bf x} - \int_{K}    
{\bf F}  ( {\bf U}_h ) \cdot {\bm \nabla} v  {\rm d} {\bf x}  
+ \sum_{ {\mathscr E} \in \partial K }  \int_{ {\mathscr E}  }  
\widehat{\bf F}_{{\bf n}_{{\mathscr E},K}} v^{{\tt int}(K)} {\rm d}s
= \int_{K}    
{\bf S}    v  {\rm d} {\bf x}, \qquad \forall v \in \mathbb V_h^k,
\end{equation}
where $\partial K$ denotes the boundary of the cell $K$, $\widehat{\bf F}_{{\bf n}_{{\mathscr E},K}}$ denotes the numerical flux on 
edge ${\mathscr E}$, 
${\bf n}_{{\mathscr E},K}$ is the outward unit normal to the edge ${\mathscr E}$ of $K$, and 
the superscripts ``${\tt int}(K)$'' or ``${\tt ext}(K)$'' indicate that the associated limit of $v (\bf x)$ at the cell interfaces is taken from the interior or the exterior of $K$. 

Assume that the target stationary hydrostatic solutions to be preserved are explicitly known and are denoted by $\{\rho^e({\bf x}), p^e({\bf x}), u^e({\bf x})=0\}$. 
Let $\rho^e_h({\bf x})$ and $p^e_h({\bf x})$ be the projections of 
$
\rho^e({\bf x}) 
$ and $p^e({\bf x})$ onto the space $\mathbb V_h^k$, respectively. The design of our multidimensional well-balanced DG methods is similar to the 1D case. 
More specifically, it is based on the well-balanced numerical flux and source term approximation given as follows. 

\subsubsection{The modified HLLC numerical fluxes} \label{sec:HLLC2D}
For any unit vector ${\bf n}\in \mathbb R^d$, 
let ${\bf F}^{hllc}\left( {\bf U}_L, {\bf U}_R ; {\bf n}
\right)$ denote the standard 2D HLLC numerical flux in the
direction ${\bf n}$ for the Euler equations. 
Details of the standard multidimensional HLLC flux can be found in \cite{BattenSISC-HLLCPP}. 
Analogous to the 1D HLLC flux, 
the 2D HLLC flux satisfies the following properties, whose proofs are similar to the 1D case and omitted.

\begin{lemma}\label{lem:HLLCcontact2D}
For any two states ${\bf U}_L =( \rho_L, 0, 0, p/(\gamma-1) )^\top $ 
and ${\bf U}_R =( \rho_R, 0, 0, p/(\gamma-1) )^\top $, the 2D HLLC flux  satisfies 
$$ 
{\bf F}^{hllc}\left( {\bf U}_L, {\bf U}_R ; {\bf n}
\right) = ( 0, p {\bf n}^\top, 0 )^\top.
$$
\end{lemma}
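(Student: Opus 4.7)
The plan is to follow exactly the strategy of Lemma~\ref{lem:HLLCcontact}, exploiting the rotational structure of the multidimensional HLLC solver together with the vanishing velocities on both sides. A convenient way is to note that the 2D HLLC flux in direction ${\bf n}$ is constructed from a 1D Riemann fan based on the normal velocity $u_n := {\bf u}\cdot{\bf n}$ and the pressure, while the tangential velocity $u_\tau$ is merely passively advected across the intermediate waves. For the prescribed states, ${\bf u}_L={\bf u}_R={\bf 0}$, so $u_{n,L}=u_{n,R}=0$ and $u_{\tau,L}=u_{\tau,R}=0$; this is what will make everything collapse.

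First, I would record the wave speed estimates. With $u_{n,L}=u_{n,R}=0$ and $p_L=p_R=p$, the 2D analog of \eqref{SLSR} gives $S_L \le -\sqrt{\gamma p/\max(\rho_L,\rho_R)}<0$ and $S_R \ge \sqrt{\gamma p/\max(\rho_L,\rho_R)}>0$. Substituting into the formula for $S_*$ (which, being built from $p_R-p_L$ and $u_{n,L}, u_{n,R}$, has a numerator identically zero under our hypotheses, and a nonzero denominator since $\rho_L S_L - \rho_R S_R < 0$), we obtain $S_*=0$. Consequently the 2D HLLC flux \eqref{HLLC} lands in one of the two intermediate branches $S_L\le 0 \le S_*$ or $S_*\le 0 \le S_R$.

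Second, I would evaluate the intermediate states using the 2D analog of \eqref{US}. The scaling factor $\rho_i(S_i-u_{n,i})/(S_i-S_*)$ reduces to $\rho_i\cdot S_i/S_i=\rho_i$; the tangential component of the momentum is preserved (hence zero), the normal component becomes $\rho_i S_*=0$, and the energy component reduces to $E_i/\rho_i$ multiplied by $\rho_i$, i.e.\ to $E_i=p/(\gamma-1)$. Therefore ${\bf U}_{*L}={\bf U}_L$ and ${\bf U}_{*R}={\bf U}_R$, which in turn gives ${\bf F}_{*L}={\bf F}({\bf U}_L)$ and ${\bf F}_{*R}={\bf F}({\bf U}_R)$.

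Finally, regardless of which subcase applies, I just need to compute ${\bf F}({\bf U}_L)$ or ${\bf F}({\bf U}_R)$ directly: since ${\bf u}={\bf 0}$ on both sides, the mass and energy components of ${\bf F}(\cdot)\cdot{\bf n}$ vanish, and the momentum component is simply $p\,{\bf n}$, giving $(0, p{\bf n}^\top, 0)^\top$ as claimed. I do not foresee any real obstacle; the one bookkeeping point worth care is the explicit splitting of momentum into normal and tangential parts in the 2D intermediate state, which should be stated once and then used to confirm that each component reduces as in the 1D case.
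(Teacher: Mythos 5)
Your computation is correct and is exactly the direct verification the paper has in mind (the paper declares the 1D case ``straightforward'' and omits the 2D proof as analogous): with zero velocities and equal pressures one gets $S_L<0<S_R$, $S_*=0$, hence ${\bf U}_{*i}={\bf U}_i$ and the flux collapses to ${\bf F}({\bf U}_i)\cdot{\bf n}=(0,p\,{\bf n}^\top,0)^\top$ in every branch. No gaps; your handling of the passively advected tangential momentum is the only 2D-specific point and you treat it correctly.
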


\begin{lemma} \label{HLLC_std_euler2D00}
For any parameters $\zeta_1,~\zeta_2 \in \mathbb R^+$ and  
any two admissible states ${\bf U}_0 , {\bf U}_1 \in G$, one has
$$\zeta_1 {\bf U}_0 - \lambda \left[
{\bf F}^{hllc} (\zeta_1 {\bf U}_0, \zeta_2{\bf U}_1; {\bf n}) - 
{\bf F} ( \zeta_1 {\bf U}_0) \cdot {\bf n}   \right]   \in G.
$$
if $\lambda > 0$ and satisfies 
\begin{equation*}
\lambda  \max_{ {\bf U} \in \{  {\bf U}_0 , {\bf U}_1 \}  } \alpha_{\bf n} ({\bf U}  ) \le 1,
\qquad \text{ with }~~ \alpha_{\bf n} ({\bf U}  ) := |{\bf u}\cdot {\bf n}|+\sqrt{\gamma p/\rho}.
\end{equation*}
\end{lemma}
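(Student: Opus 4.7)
My plan is to mirror the 1D argument used for \eqref{eq:www22gg} in Lemma \ref{HLLC_std_euler00}, reducing the scaled statement to the unscaled one and then expressing the claimed quantity as a convex combination of admissible states drawn from the HLLC approximate Riemann fan in the direction $\bf n$.

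First, I would invoke the scale invariance (Lemma \ref{lem2}) to set $\widetilde{\bf U}_0 := \zeta_1 {\bf U}_0$ and $\widetilde{\bf U}_1 := \zeta_2 {\bf U}_1$, both of which lie in $G$. Since the velocity and the ratio $p/\rho$ are invariant under uniform positive scaling of the conservative vector, we have $\alpha_{\bf n}(\widetilde{\bf U}_i) = \alpha_{\bf n}({\bf U}_i)$, so the CFL hypothesis is unchanged for the rescaled pair. Consequently, it suffices to establish
\[
\widetilde{\bf U}_0 - \lambda\bigl[ {\bf F}^{hllc}(\widetilde{\bf U}_0,\widetilde{\bf U}_1;{\bf n}) - {\bf F}(\widetilde{\bf U}_0)\cdot{\bf n} \bigr] \in G
\]
under $\lambda \max_{{\bf U}\in\{\widetilde{\bf U}_0,\widetilde{\bf U}_1\}}\alpha_{\bf n}({\bf U})\le 1$, i.e.\ the $\zeta_1=\zeta_2=1$ case of the claim.

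Next, I would mimic the 1D proof of Lemma \ref{HLLC_std_euler:old00}: denote by $\mathcal{R}(x/t;\widetilde{\bf U}_0,\widetilde{\bf U}_1;{\bf n})$ the HLLC approximate Riemann solution across the interface with normal ${\bf n}$, consisting of four constant states $\widetilde{\bf U}_0$, $\widetilde{\bf U}_{*0}$, $\widetilde{\bf U}_{*1}$, $\widetilde{\bf U}_1$ separated by waves of speeds $S_L\le S_*\le S_R$. By direct integration in the normal direction (using the consistency property that $\mathcal{R}$ is an exact weak solution of the locally 1D Riemann problem along $\bf n$), together with the CFL bound $\lambda|S_L|\le 1$, one obtains the identity
\[
\widetilde{\bf U}_0 - \lambda\bigl[{\bf F}^{hllc}(\widetilde{\bf U}_0,\widetilde{\bf U}_1;{\bf n}) - {\bf F}(\widetilde{\bf U}_0)\cdot{\bf n}\bigr]
= \int_{\lambda\min\{S_L,0\}}^{0} \mathcal{R}(x/\lambda;\widetilde{\bf U}_0,\widetilde{\bf U}_1;{\bf n})\,{\rm d}x + \bigl(1+\lambda\min\{S_L,0\}\bigr)\widetilde{\bf U}_0.
\]
This writes the left-hand side as a nonnegative convex combination of $\widetilde{\bf U}_0\in G$ and pointwise values of $\mathcal{R}$, which lie in $\overline{G}$ by the multidimensional analog of Lemma \ref{lem:HLLCpp}; the convex combination is in $G$ by Lemma \ref{lem3}.

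The main obstacle I anticipate is making the integration identity above rigorous in the multidimensional notation and invoking the correct positivity statement for the 2D intermediate states $\widetilde{\bf U}_{*0},\widetilde{\bf U}_{*1}$. In $d=2$ the tangential momentum component is simply carried along (the middle contact only affects the normal momentum and density), so the admissibility of the star states reduces cleanly to the 1D result, which is precisely the content of \cite[Section 5.3]{BattenSISC-HLLCPP} applied in the ${\bf n}$-direction. Once this is in place, the wave-speed estimate $|S_L|,|S_R|\le \max_{i}\alpha_{\bf n}({\bf U}_i)$ together with the CFL hypothesis ensures that the weight $1+\lambda\min\{S_L,0\}$ is nonnegative, closing the convex-combination argument.
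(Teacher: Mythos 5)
Your argument is correct and is essentially the proof the paper intends: the paper omits the proof of this lemma with the remark that it is "similar to the 1D case," and your reduction via scale invariance (Lemma \ref{lem2}) followed by the Riemann-fan integral representation and the admissibility of the HLLC star states is precisely the 1D argument of Lemmas \ref{HLLC_std_euler:old00} and \ref{HLLC_std_euler00} transplanted to the direction ${\bf n}$. Your integration identity and the observation that the tangential momentum is passively advected (so the star-state admissibility reduces to the result of \cite{BattenSISC-HLLCPP}) are both sound.
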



Based on the above properties, our well-balanced numerical fluxes are chosen as the modified HLLC flux
\begin{equation}\label{2DWBflux}
\widehat{\bf F}_{{\bf n}_{{\mathscr E},K}} = {\bf F}^{hllc} \left(
\frac{  p^{e,\star}_h }{ p^{e,{\tt int}(K)}_h }
{\bf U}_h^{{\tt int}(K)},~\frac{ p^{e,\star}_h }{ p^{e,{\tt ext}(K)}_h } {\bf U}_h^{{\tt ext}(K)};~ {\bf n}_{{\mathscr E},K}
\right),
\end{equation}
with $p^{e,\star}_h := \frac12 \big( p^{e,{\tt int}(K)}_h + p^{e,{\tt ext}(K)}_h \big)$. 
Using the $N$-point Gauss quadrature with $N=k+1$, we obtain the following approximation to the edge 
integral of numerical flux in \eqref{eq:2DDG}
\begin{equation}\label{eq:2DWBflux_edge}
\int_{ {\mathscr E}  }  \widehat{\bf F}_{{\bf n}_{{\mathscr E},K}} v^{{\tt int}(K)} {\rm d}s \approx 
|{\mathscr E} | \sum_{\mu=1}^N \omega_\mu 
\widehat{\bf F}_{{\bf n}_{{\mathscr E},K}} ( {\bf x}_{\mathscr E}^{(\mu)} ) v^{{\tt int}(K)} ({\bf x}_{\mathscr E}^{(\mu)}),
\end{equation}
where $|{\mathscr E}|$ is the length of the edge ${\mathscr E}$, 
$\{ {\bf x}_{\mathscr E}^{(\mu)}, ~\omega_\mu \}_{1\le \mu \le N}$ denote the set of 1D $N$-point Gauss quadrature nodes and weights on the edge ${\mathscr E}$.

\subsubsection{Source term approximations}
Let ${\bf S} =: ( 0, {\bf S}^{[2]}, S^{[3]} )^\top$ with ${\bf S}^{[2]} :=-\rho  {\bm \nabla} \phi$. 
We decompose the integral of the source terms in the momentum equations as 
\begin{align*}
& \int_{K} {\bf S}^{[2]} v {\rm d} {\bf x}  = \int_{I_j}  -\rho  {\bm \nabla} \phi v {\rm d} {\bf x} = \int_{K} \frac{\rho }{\rho^e} {\bm \nabla} p^e v {\rm d} {\bf x}
= \int_{K} \left( \frac{\rho }{\rho^e} - 
\frac{ \overline \rho_K }{ \overline \rho^e_K}
+ \frac{ \overline \rho_K }{ \overline \rho^e_K } 
\right) {\bm \nabla} p^e v {\rm d} {\bf x}
\\ 
& \qquad = \int_{K} \left( \frac{\rho }{\rho^e} - 
\frac{ \overline \rho_K }{ \overline \rho^e_K }
\right) {\bm \nabla} p^e v {\rm d}x + \frac{ \overline \rho_K }{ \overline \rho^e_K } \left( 
\sum_{ {\mathscr E} \in \partial K }  \int_{ {\mathscr E}  } p^{e} v^{ {\tt int}(K) } {\bf n}_{ {\mathscr E}, K } {\rm d} s - \int_{K}  p^e {\bm \nabla} v {\rm d} {\bf x}
\right) ,
\end{align*}
where ${\bm \nabla} p^e = -\rho^e {\bm \nabla} \phi $ has been used in the second identity, and the notation $\overline{(\cdot)}_K$ denotes the cell average of the associated quantity over the cell $K$.
This source term is then approximated by 
\begin{align} \label{eq:2Ds2}
& \int_{K} {\bf S}^{[2]} v {\rm d} x  \approx |K| \sum_{ q=1}^Q \varpi_q \left( \frac{\rho_h ( {\bf x}_K^{(q)} ) }{\rho^e_h  ( {\bf x}_K^{(q)} ) } - 
\frac{ \overline {(\rho_h)}_K }{ \overline {(\rho^e_h)}_K  }
\right) {\bm \nabla} p_h^e (   {\bf x}_K^{(q)}  ) v  (  {\bf x}_K^{(q)}  ) 
\\ \nonumber
& + \frac{ \overline {(\rho_h)}_K }{ \overline {(\rho^e_h)}_K  } \left[
\sum_{ {\mathscr E} \in \partial K }  \left( |{\mathscr E}| \sum_{\mu=1}^N \omega_\mu p^{e,\star}_h ( {\bf x}_{\mathscr E}^{(\mu)} ) v^{ {\tt int}(K) } ( {\bf x}_{\mathscr E}^{(\mu)} ) {\bf n}_{ {\mathscr E}, K } \right)
- |K| \sum_{q=1}^Q \varpi_q p^e_h  ( {\bf x}_K^{(q)} ) {\bm \nabla} v  ( {\bf x}_K^{(q)} ) 
\right] =: \big\langle {\bf S}^{[2]}, v \big\rangle_K,
\end{align}
where $|K|$ is the area of the cell $K$, 
$\{ {\bf x}_K^{(q)}, ~ \varpi_K^{(q)}\}_{1\le q \le Q}$ denote a set of 2D quadrature nodes and weights in $K$. 
Similarly, we approximate the integral of the source term in the energy equation by 
\begin{align} \label{eq:2Ds3}
& \int_{K} S^{[3]} v {\rm d} x  \approx |K| \sum_{ q=1}^Q \varpi_q \left( \frac{ {\bf m}_h ( {\bf x}_K^{(q)} ) }{\rho^e_h  ( {\bf x}_K^{(q)} ) } - 
\frac{ \overline {( {\bf m}_h)}_K }{ \overline {(\rho^e_h)}_K  }
\right) \cdot  {\bm \nabla} p_h^e (   {\bf x}_K^{(q)}  ) v  (  {\bf x}_K^{(q)}  ) 
\\ \nonumber
&  + \frac{ \overline {( {\bf m}_h)}_K }{ \overline {(\rho^e_h)}_K  }  \left[
\sum_{ {\mathscr E} \in \partial K }  \left( |{\mathscr E}| \sum_{\mu=1}^N \omega_\mu p^{e,\star}_h ( {\bf x}_{\mathscr E}^{(\mu)} ) v^{ {\tt int}(K) } ( {\bf x}_{\mathscr E}^{(\mu)} ) {\bf n}_{ {\mathscr E}, K } \right)
- |K| \sum_{q=1}^Q \varpi_q p^e_h  ( {\bf x}_K^{(q)} ) {\bm \nabla} v  ( {\bf x}_K^{(q)} ) 
\right] =: \big\langle S^{[3]}, v \big\rangle_K.
\end{align}

\subsubsection{Well-balanced DG methods}
The element integral $\int_{K}    
{\bf F}  ( {\bf U}_h ) \cdot {\bm \nabla} v  {\rm d} {\bf x}$ should be approximated by 
the same 2D quadrature set
\begin{equation}\label{eq:2Dfluxelem}
\int_{K}    
{\bf F}  ( {\bf U}_h ) \cdot {\bm \nabla} v  {\rm d} {\bf x} \approx 
|K| \sum_{ q=1}^Q \varpi_q  {\bf F}  \big( {\bf U}_h(  {\bf x}_K^{(q)}  )   \big) \cdot {\bm \nabla} v  (  {\bf x}_K^{(q)}  ).
\end{equation}
Substituting the approximations \eqref{2DWBflux}--\eqref{eq:2Dfluxelem} into 
\eqref{eq:2DDG} gives the following DG formulation
\begin{equation}\label{eq:2DDGweak}
\begin{aligned}
\int_{K} ({\bf U}_h)_t v {\rm d} x & = 
|K| \sum_{ q=1}^Q \varpi_q  {\bf F}  \big( {\bf U}_h(  {\bf x}_K^{(q)}  )   \big) \cdot {\bm \nabla} v  (  {\bf x}_K^{(q)}  ) + \Big( 0, \big\langle {\bf S}^{[2]}, v \big\rangle_j, \big\langle S^{[3]}, v \big\rangle_j  \Big)^\top
\\
& \quad 
- \sum_{ {\mathscr E} \in \partial K } \left(  |{\mathscr E} | \sum_{\mu=1}^N \omega_\mu 
\widehat{\bf F}_{{\bf n}_{{\mathscr E},K}} ( {\bf x}_{\mathscr E}^{(\mu)} ) v^{{\tt int}(K)} ({\bf x}_{\mathscr E}^{(\mu)}) \right),  \qquad \forall v \in \mathbb V_h^k.
\end{aligned}
\end{equation}

\begin{theorem}\label{thm:WB2D}
For the 2D Euler equations \eqref{eq:dD} with gravitation, the semi-discrete DG schemes \eqref{eq:2DDGweak} are well-balanced for a general known stationary hydrostatic solution  \eqref{eq:equi1}. 
\end{theorem}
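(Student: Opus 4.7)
The plan is to mimic the structure of the 1D proof (Theorem 3.1) almost verbatim, with the multidimensional HLLC contact property (Lemma 2.10) playing the role previously filled by Lemma 2.9. First I would plug in the equilibrium ansatz $\rho_h = \rho_h^e$, ${\bf u}_h = {\bf 0}$ (hence ${\bf m}_h = {\bf 0}$ and $\overline{({\bf m}_h)}_K = {\bf 0}$), and $E_h = p_h^e/(\gamma-1)$ into the scaled interface states appearing in \eqref{2DWBflux}. Since both the interior and exterior scaled traces take the form $(\rho_\bullet, {\bf 0}, p^{e,\star}_h/(\gamma-1))^\top$ (for possibly different densities, but with identical zero velocity and common scaled pressure $p^{e,\star}_h$), Lemma~\ref{lem:HLLCcontact2D} immediately reduces the modified HLLC flux to $\widehat{\bf F}_{{\bf n}_{{\mathscr E},K}} = (0, p^{e,\star}_h \, {\bf n}_{{\mathscr E},K}^\top, 0)^\top$ at every edge quadrature node.

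Next I would dispose of the mass and energy components. For the continuity equation, the physical flux ${\bf F}_1({\bf U}_h) = \rho u$ vanishes pointwise, the first entry of $\widehat{\bf F}_{{\bf n}_{{\mathscr E},K}}$ is zero by the previous step, and there is no source; so the right-hand side of \eqref{eq:2DDGweak} vanishes. For the energy equation, the physical flux $(E+p){\bf u}$ vanishes pointwise, the fourth (energy) component of $\widehat{\bf F}_{{\bf n}_{{\mathscr E},K}}$ is zero, and—crucially—every occurrence of ${\bf m}_h$ or $\overline{({\bf m}_h)}_K$ in the definition \eqref{eq:2Ds3} of $\langle S^{[3]}, v\rangle_K$ is zero, so $\langle S^{[3]}, v\rangle_K = 0$.

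The only nontrivial piece is the momentum equation. Here the key cancellations are exactly those from the 1D case, lifted to vector form. Since $\rho_h = \rho_h^e$, the pointwise ratio $\rho_h({\bf x}_K^{(q)})/\rho_h^e({\bf x}_K^{(q)})$ and the cell-average ratio $\overline{(\rho_h)}_K / \overline{(\rho_h^e)}_K$ both equal $1$, so the first sum in \eqref{eq:2Ds2} drops out and the prefactor in the second bracket is $1$. The physical momentum flux reduces to $p_h^e {\bf I}_d$ (because $\rho {\bf u} \otimes {\bf u} = {\bf 0}$), giving the volume contribution $|K|\sum_q \varpi_q p_h^e({\bf x}_K^{(q)}) \, {\bm \nabla} v({\bf x}_K^{(q)})$, which is matched term-for-term by the volume part of $\langle {\bf S}^{[2]}, v\rangle_K$ with opposite sign. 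Simultaneously, the edge contribution from $\widehat{\bf F}_{{\bf n}_{{\mathscr E},K}}$ in \eqref{eq:2DWBflux_edge} equals $|{\mathscr E}|\sum_\mu \omega_\mu p^{e,\star}_h({\bf x}_{\mathscr E}^{(\mu)}) v^{{\tt int}(K)}({\bf x}_{\mathscr E}^{(\mu)}) {\bf n}_{{\mathscr E},K}$, which is cancelled exactly by the boundary-sum term in $\langle {\bf S}^{[2]}, v\rangle_K$.

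I do not anticipate a true obstacle; the only thing that requires care is bookkeeping. One must verify that the \emph{same} quadrature nodes $\{{\bf x}_K^{(q)}\}$ (and weights $\{\varpi_q\}$) are used in both \eqref{eq:2Dfluxelem} and the volume sums of \eqref{eq:2Ds2}, and similarly that the \emph{same} edge quadrature is used in \eqref{eq:2DWBflux_edge} and the boundary sums of \eqref{eq:2Ds2}; otherwise the cancellations are only approximate. Since the scheme \eqref{eq:2DDGweak} is constructed using a single consistent set of quadratures on volumes and edges, this compatibility is built in, and the cancellations above are exact, completing the proof.
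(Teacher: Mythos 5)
Your proposal is correct and follows exactly the route the paper intends: the paper omits the proof of Theorem \ref{thm:WB2D} precisely because it is the componentwise 2D analogue of the proof of Theorem \ref{thm:1DWB}, which is what you carry out — the contact property of Lemma \ref{lem:HLLCcontact2D} collapses the modified flux to $(0, p^{e,\star}_h {\bf n}^\top, 0)^\top$, the mass and energy components vanish identically (the latter because ${\bf m}_h$ and $\overline{({\bf m}_h)}_K$ vanish in \eqref{eq:2Ds3}), and the momentum balance reduces to the exact cancellation of the $p^e_h$ volume and $p^{e,\star}_h$ edge quadrature sums against $\langle {\bf S}^{[2]}, v\rangle_K$. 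Your closing remark about using the same quadrature nodes and weights in \eqref{eq:2Dfluxelem}, \eqref{eq:2DWBflux_edge}, and \eqref{eq:2Ds2} is indeed the one bookkeeping point that makes the cancellation exact rather than approximate, and it holds by construction of \eqref{eq:2DDGweak}.
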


The proof is similar to that of Theorem \ref{thm:1DWB} and is thus omitted.

\subsection{Positivity of first-order well-balanced DG scheme}
Denote the cell average of ${\bf U}_h({\bf x},t)$ over $K$ by $\overline {\bf U}_K(t)$,
and take the test function $v=1$ in \eqref{eq:2DDGweak}. We obtain the semi-discrete 
evolution equations satisfied by the cell average as 
\begin{equation}\label{eq:AVEevolve2D}
\frac{{\rm d} \overline {\bf U}_K (t)} { {\rm d} t} = {\bf L}_K ( {\bf U}_h ) := -\frac{1}{|K|} \sum_{ {\mathscr E} \in \partial K } \left(  |{\mathscr E} | \sum_{\mu=1}^N \omega_\mu 
\widehat{\bf F}_{{\bf n}_{{\mathscr E},K}} ( {\bf x}_{\mathscr E}^{(\mu)} ) \right) + \overline{\bf S}_K,
\end{equation}
where $\overline{\bf S}_K= \big( 0, \overline {\bf S}_K^{[2]}, \overline S_K^{[3]} \big)^\top$ with 
$\overline {\bf S}_K^{[\ell]} := \frac1{|K|} \left\langle {\bf S}^{[\ell]}, 1 \right\rangle_K$ for $\ell=2~,3$.

We start with showing the positivity of the first-order ($k=0$) well-balanced DG scheme \eqref{eq:2DDGweak}. For each $K\in {\mathcal T}_h$, 
let $K_{\mathscr E}$ denote the adjacent cell that shares the edge ${\mathscr E}$ with $K$, and define 
\begin{align*}
\widehat \alpha_K^F := \max\left\{
\max_{ {\mathscr E} \in \partial K } \alpha_{{\bf n}_{{\mathscr E},K} } ( \overline {\bf U}_{K} ), 
\max_{ {\mathscr E} \in \partial K } \alpha_{ {\bf n}_{{\mathscr E},K} } ( \overline {\bf U}_{K_{\mathscr E}} )
\right\}, \qquad \widehat \alpha_K^S := 
\frac{ \left\| \sum_{ {\mathscr E} \in \partial K } |{\mathscr E}| p^{e,\star}_{ {\mathscr E}, K } {\bf n}_{ {\mathscr E}, K }  \right\| } { |K| \overline \rho_K^e \sqrt{ 2 \overline e_K } },
\end{align*}
where $p^{e,\star}_{ {\mathscr E}, K }:= ( \overline p_K^e + \overline p^e_{ K_{\mathscr E} }  )/2$. 

\begin{theorem}\label{thm:2D1st}
If the DG polynomial degree $k=0$ and $\overline {\bf U}_{K} \in G$ for all $K\in {\mathcal T}_h$, we have 
\begin{equation}\label{eq:2Dorder1}
\overline {\bf U}_K + \Delta t {\bf L}_K ( {\bf U}_h  ) \in G, \quad \forall K \in {\mathcal T}_h,
\end{equation}
under the CFL-type condition 
\begin{equation}\label{eq:CFL1F2D}	
\Delta t \left(  2 \frac{\widehat \alpha_K^F}{|K|} \sum_{ {\mathscr E} \in \partial K } 
|{\mathscr E}| \frac{  p^{e,\star}_{ {\mathscr E}, K } }{  \overline p_K^e} 
+ \widehat \alpha_K^S
\right)  \le 1.
\end{equation}
\end{theorem}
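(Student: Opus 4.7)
The plan is to mirror the 1D argument of Theorem \ref{thm:1D1st}, adapting the flux-part analysis to the polygonal geometry. First, I would exploit $k=0$ to simplify the scheme. Since ${\bf U}_h$ is piecewise constant, the flux \eqref{2DWBflux} on an edge ${\mathscr E} \in \partial K$ reduces to ${\bf F}^{hllc}(\zeta_{\mathscr E} \overline{\bf U}_K, \zeta_{\mathscr E}' \overline{\bf U}_{K_{\mathscr E}}; {\bf n}_{{\mathscr E},K})$ with $\zeta_{\mathscr E} := p^{e,\star}_{{\mathscr E},K}/\overline p_K^e$ and $\zeta_{\mathscr E}' := p^{e,\star}_{{\mathscr E},K}/\overline p_{K_{\mathscr E}}^e$. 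Choosing $v=1$ in \eqref{eq:2Ds2}--\eqref{eq:2Ds3} kills every ${\bm \nabla} v$ contribution, and since $\rho_h/\rho_h^e \equiv \overline \rho_K/\overline \rho_K^e$ on $K$ the first volume sums vanish identically. The averaged source then collapses to $\overline{\bf S}_K = (0, \overline \rho_K {\bf A}, \overline{\bf m}_K \cdot {\bf A})^\top$ with ${\bf A} := \frac{1}{|K| \overline \rho_K^e} \sum_{{\mathscr E} \in \partial K} |{\mathscr E}| p^{e,\star}_{{\mathscr E},K} {\bf n}_{{\mathscr E},K}$, and in particular $\widehat \alpha_K^S = \|{\bf A}\|/\sqrt{2 \overline e_K}$.

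Next I would set $\mu_F := 2 \widehat \alpha_K^F \sum_{\mathscr E} |{\mathscr E}| p^{e,\star}_{{\mathscr E},K}/(|K| \overline p_K^e)$, $\mu_S := \widehat \alpha_K^S$ and $\bar \alpha_K := \mu_F + \mu_S$ (the factor appearing in \eqref{eq:CFL1F2D}), and split
\begin{equation*}
\overline{\bf U}_K + \Delta t {\bf L}_K({\bf U}_h) = \frac{\mu_F}{\bar \alpha_K} {\bf W}_F + \frac{\mu_S}{\bar \alpha_K} {\bf W}_S,
\end{equation*}
where ${\bf W}_F := \overline{\bf U}_K - \frac{\Delta t \bar \alpha_K}{|K| \mu_F} \sum_{\mathscr E} |{\mathscr E}| \widehat{\bf F}_{{\bf n}_{{\mathscr E},K}}$ and ${\bf W}_S := \overline{\bf U}_K + \frac{\Delta t \bar \alpha_K}{\mu_S} \overline{\bf S}_K$. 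Under the assumed CFL condition $\Delta t \bar \alpha_K \le 1$, Lemma \ref{lem:UcontrolS} with $\lambda=1$, ${\bf a}={\bf A}$ and $\delta = \Delta t \bar \alpha_K/\mu_S$ immediately yields ${\bf W}_S \in \overline G$, since $|\delta| \|{\bf A}\|/\sqrt{2 \overline e_K} = \Delta t \bar \alpha_K \le 1$.

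The main work is showing ${\bf W}_F \in G$. I would introduce weights $w_{\mathscr E} := |{\mathscr E}| \overline p_K^e/\sum_{{\mathscr E}'} |{\mathscr E}'| p^{e,\star}_{{\mathscr E}',K}$, which are arranged so that $\sum_{\mathscr E} w_{\mathscr E} \zeta_{\mathscr E} = 1$ and $|{\mathscr E}|/w_{\mathscr E}$ is edge-independent; this rewrites ${\bf W}_F = \sum_{\mathscr E} w_{\mathscr E} X_{\mathscr E}$ with $X_{\mathscr E} := \zeta_{\mathscr E} \overline{\bf U}_K - \lambda' \widehat{\bf F}_{{\bf n}_{{\mathscr E},K}}$ and $\lambda' = \Delta t \bar \alpha_K/(2 \widehat \alpha_K^F)$, so the CFL reads $2\lambda' \widehat \alpha_K^F \le 1$. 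The positive-scaling identity ${\bf F}(\zeta {\bf U}) = \zeta {\bf F}({\bf U})$ then allows the further split
\begin{equation*}
X_{\mathscr E} = \tfrac12 \bigl[\zeta_{\mathscr E} \overline{\bf U}_K - 2\lambda'\bigl(\widehat{\bf F}_{{\bf n}_{{\mathscr E},K}} - {\bf F}(\zeta_{\mathscr E} \overline{\bf U}_K) \cdot {\bf n}_{{\mathscr E},K}\bigr)\bigr] + \tfrac{\zeta_{\mathscr E}}{2}\bigl[\overline{\bf U}_K - 2\lambda' {\bf F}(\overline{\bf U}_K) \cdot {\bf n}_{{\mathscr E},K}\bigr].
\end{equation*}
Lemma \ref{HLLC_std_euler2D00} places the first bracket in $G$ (the wave speeds of $\overline{\bf U}_K$ and $\overline{\bf U}_{K_{\mathscr E}}$ along ${\bf n}_{{\mathscr E},K}$ being bounded by $\widehat \alpha_K^F$), and Lemmas \ref{lem:LFflux} and \ref{lem2} together place the second bracket in $G$. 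Lemma \ref{lem3} applied edgewise gives $X_{\mathscr E} \in G$, a second application gives ${\bf W}_F \in G$, and a final use on the convex combination of ${\bf W}_F$ and ${\bf W}_S$ delivers \eqref{eq:2Dorder1}.

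The hard part is the edgewise decomposition of ${\bf W}_F$: the flux ${\bf F}(\zeta_{\mathscr E} \overline{\bf U}_K) \cdot {\bf n}_{{\mathscr E},K}$ must be both added and subtracted to cast $X_{\mathscr E}$ into the form required by Lemma \ref{HLLC_std_euler2D00}, and the resulting residual term is only absorbable by Lemma \ref{lem:LFflux} thanks to the factor of $2$ hard-wired into $\mu_F$. Choosing $w_{\mathscr E}$ so that $\lambda'$ is $\mathscr E$-independent is what pins the CFL constraint down to precisely \eqref{eq:CFL1F2D}; a naive uniform splitting would instead create edge-dependent wave-speed constraints that need not align with the stated condition.
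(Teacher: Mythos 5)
Your proof is correct and follows essentially the same route as the paper's: the same add-and-subtract of ${\bf F}(\zeta_{\mathscr E}\,\overline{\bf U}_K)\cdot{\bf n}_{{\mathscr E},K}$ to cast the HLLC difference into the form of Lemma \ref{HLLC_std_euler2D00}, the same use of Lemma \ref{lem:LFflux} for the residual flux term and Lemma \ref{lem:UcontrolS} for the source, and the identical CFL condition \eqref{eq:CFL1F2D}. The only difference is bookkeeping: you organize everything as an exact convex combination ${\bf W}_F,{\bf W}_S$ with edgewise weights $w_{\mathscr E}$ (mirroring the paper's own 1D proof of Theorem \ref{thm:1D1st}, and requiring the trivially handled degenerate case $\widehat\alpha_K^S=0$ to be noted separately), whereas the paper's 2D proof uses a four-term split with an explicit leftover $\bigl[1-\Delta t(\cdots)\bigr]\overline{\bf U}_K$; the two are equivalent.
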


\begin{proof}
Note that, for $k=0$, 
$ {\bf U}_h ({\bf x},t) \equiv \overline {\bf U}_K(t)$ for all ${\bf x}\in K$. We have 
\begin{align*}
\overline {\bf U}_K + \Delta t {\bf L}_K ( {\bf U}_h  ) & = \overline {\bf U}_K
-\frac{\Delta t}{|K|} \sum_{ {\mathscr E} \in \partial K }   |{\mathscr E} | 
{\bf F}^{hllc} \left(
\frac{  p^{e,\star}_{ {\mathscr E}, K } }{  \overline p_K^e}
\overline {\bf U}_K,\frac{  p^{e,\star}_{ {\mathscr E}, K } }{  \overline p^e_{ K_{\mathscr E} } } \overline{\bf U}_{ K_{\mathscr E} };{\bf n}_{{\mathscr E},K}
\right)
+ \Delta t \overline{\bf S}_K
\\
& =  \overline {\bf U}_K -\frac{\Delta t}{|K|} \sum_{ {\mathscr E} \in \partial K }  \left(  |{\mathscr E} | \frac{  p^{e,\star}_{ {\mathscr E}, K } }{  \overline p_K^e} {\bf F}  (\overline {\bf U}_K) \cdot {\bf n}_{{\mathscr E},K} \right) 
+  \Delta t \overline{\bf S}_K
\\
& \quad + \frac{\Delta t}{|K|} \sum_{ {\mathscr E} \in \partial K }   |{\mathscr E} | \left[  {\bf F}  \left(
\frac{  p^{e,\star}_{ {\mathscr E}, K } }{  \overline p_K^e}
\overline {\bf U}_K \right) \cdot {\bf n}_{{\mathscr E},K} - 
{\bf F}^{hllc} \left(
\frac{  p^{e,\star}_{ {\mathscr E}, K } }{  \overline p_K^e}
\overline {\bf U}_K,\frac{  p^{e,\star}_{ {\mathscr E}, K } }{  \overline p^e_{ K_{\mathscr E} } } \overline{\bf U}_{ K_{\mathscr E} };{\bf n}_{{\mathscr E},K}
\right) \right], 
\end{align*}
where the homogeneous property ${\bf F}(a{\bf U}) = a {\bf F}({\bf U})$ for any $a \in \mathbb R^+$ has been used. 
We further split $\overline {\bf U}_K + \Delta t {\bf L}_K ( {\bf U}_h  )$ into four parts as
\begin{equation}\label{key111}
\overline {\bf U}_K + \Delta t {\bf L}_K ( {\bf U}_h  ) = 
{\bf W}_1 + {\bf W}_2 + {\bf W}_3 + {\bf W}_4,
\end{equation} 
with  
\begin{align*}
& {\bf W}_1 := \left[ 1- 	 \Delta t   \left(  2 \frac{ \widehat \alpha_K^F}{|K|} \sum_{ {\mathscr E} \in \partial K } 
|{\mathscr E}| \frac{  p^{e,\star}_{ {\mathscr E}, K } }{  \overline p_K^e} 
+ \widehat \alpha_K^S
\right)  \right] \overline {\bf U}_K,
\\
& {\bf W}_2 := \frac{\Delta t}{|K|} \sum_{ {\mathscr E} \in \partial K } |{\mathscr E} | \widehat \alpha_K^F \frac{  p^{e,\star}_{ {\mathscr E}, K } }{  \overline p_K^e}  \left(
\overline {\bf U}_K - \frac{1}{ \widehat \alpha_K^F }
{\bf F}   (\overline {\bf U}_K) \cdot {\bf n}_{{\mathscr E},K} \right), \qquad 
{\bf W}_3:=  \Delta t  \left( \widehat \alpha_K^S  \overline {\bf U}_K + 
\overline {\bf S}_K \right),
\\
& {\bf W}_4 := \frac{\Delta t}{|K|} \sum_{ {\mathscr E} \in \partial K }   |{\mathscr E} | \widehat \alpha_K^F \left\{ 
\frac{  p^{e,\star}_{ {\mathscr E}, K } }{  \overline p_K^e}
\overline {\bf U}_K - \frac{1}{ \alpha_K^F } \left[ 
{\bf F}^{hllc} \left(
\frac{  p^{e,\star}_{ {\mathscr E}, K } }{  \overline p_K^e}
\overline {\bf U}_K,\frac{  p^{e,\star}_{ {\mathscr E}, K } }{  \overline p^e_{ K_{\mathscr E} } } \overline{\bf U}_{ K_{\mathscr E} };{\bf n}_{{\mathscr E},K}
\right) 
-  {\bf F}  \left(
\frac{  p^{e,\star}_{ {\mathscr E}, K } }{  \overline p_K^e}
\overline {\bf U}_K \right) \cdot {\bf n}_{{\mathscr E},K} 
\right] 
\right\}.
\end{align*} 
By using Lemma \ref{lem2}, it is easy to observe that ${\bf W}_1 \in \overline G$ under the condition \eqref{eq:CFL1F2D}. Lemma \ref{lem:LFflux} leads to
$ \overline {\bf U}_K - \frac{1}{ \widehat \alpha_K^F }
{\bf F}   (\overline {\bf U}_K) \cdot {\bf n}_{{\mathscr E},K} \in G$, which implies 
${\bf W}_2 \in G$ with the aid of Lemma \ref{lem3}. Note that 
\begin{equation*}
\widehat \alpha_K^S  \overline {\bf U}_K + \overline {\bf S}_K 
=  \widehat \alpha_K^S  \overline {\bf U}_K + \frac{1}{ |K| \overline \rho_K^e } 
\big( 
0,~ 
\overline \rho_K  {\bf a},~ 
\overline {\bf m}_K \cdot {\bf a} \big)^\top, 
\qquad \quad {\bf a}:= \sum_{ {\mathscr E} \in \partial K } |{\mathscr E}| p_{ {\mathscr E}, K }^{e,\star} {\bf n}_{ {\mathscr E}, K },
\end{equation*}
and $ \frac{1}{ |K| \overline \rho_K^e } \frac{ \| {\bf a}\| }{  \sqrt{ 2 \overline e_K } } 
= \widehat \alpha_K^S $. This yields $ \widehat \alpha_K^S  \overline {\bf U}_K + \overline {\bf S}_K \in \overline G$ by Lemma \ref{lem:UcontrolS}. 
Thus ${\bf W}_3 \in \overline G$. 
Sequentially using Lemma \ref{HLLC_std_euler2D00} and Lemma \ref{lem3} yields ${\bf W}_4\in G$.  Because ${\bf W}_1,{\bf W}_3 \in \overline G$ and ${\bf W}_2,{\bf W}_4 \in G$, we conclude from \eqref{key111} that $\overline {\bf U}_K + \Delta t {\bf L}_K ( {\bf U}_h  ) \in G$, which completes the proof. 
\end{proof}

Theorem \ref{thm:2D1st} indicates that the first-order ($k = 0$)
well-balanced DG method \eqref{eq:2DDGweak}, coupled with the forward Euler time
discretization, is positivity-preserving under the CFL-type condition \eqref{eq:CFL1F2D}.

\subsection{Positivity-preserving high-order well-balanced DG schemes}

When the DG polynomial degree $k \ge 1$, the
high-order well-balanced DG schemes \eqref{eq:2DDGweak} are not positivity-preserving in general. 
Similar to the 1D case, 
we can prove that our schemes satisfy a weak positivity property, which is crucial and implies that 
a simple limiter can enforce the positivity-preserving property without losing conservation and high-order accuracy. 

\subsubsection{Theoretical positivity-preserving analysis}
Assume that there exists a special 2D quadrature on each cell $K \in {\mathcal T}_h$ satisfying:
\begin{enumerate}[label=(\roman*)]
\item The quadrature rule has positive weights and is
exact for integrals of polynomials of degree up to $k$ on the cell $K$; 
\item The set of the quadrature points, denoted by $ {\mathbb S}_K^{(1)}$, must include all the Gauss quadrature points $ {\bf x}_{\mathscr E}^{(\mu)}$, $\mu=1,\dots,N$, 
on all the edges ${\mathscr E} \in \partial K$. 
\end{enumerate}
In other words, we would like to have a special quadrature such that 
\begin{equation}\label{eq:decomposition}
\frac{1}{|K|}\int_K u({\bf x}) {\rm d}{\bf x}  = 
\sum_{{\mathscr E} \in \partial K} \sum_{\mu=1}^N \widehat \varpi_{{\mathscr E}}^{(\mu)} u ( {\bf x}_{{\mathscr E}}^{(\mu)} ) + \sum_{ q=1 }^{\widetilde Q} \widetilde \varpi_q   
u ( \widetilde {\bf x}_K^{(q)} ), \quad \forall 
u \in {\mathbb P}^{k}(K) ,
\end{equation}
where $\{\widetilde {\bf x}_K^{(q)}\}$ are the other (possible) quadrature points in $K$, and the quadrature weights 
$\widehat \varpi_{{\mathscr E}}^{(\mu)}$ and $\widetilde \varpi_q $ are positive.
For rectangular cells, this quadrature was constructed in \cite{zhang2010,zhang2010b} by tensor products of Gauss quadrature and Gauss--Lobatto quadrature. 
For triangular cells and more general polygons, see \cite{zhang2012maximum,du2018positivity} 
for how to construct such quadrature. 
We remark that this special quadrature is only used in the proof and the positivity-preserving limiter presented later, and will not be used to evaluate any integral 
in the numerical implementation.
With this, we can define the point set 
\begin{align} \label{2DPPset}
\mathbb S_K & :=   {\mathbb S}_K^{(1)} \cup  {\mathbb S}_K^{(2)}
=
\big\{ {\bf x}_{{\mathscr E}}^{(\mu)}: {\mathscr E} \in \partial K, 1\le \mu \le  N \big\} \cup \big\{ \widetilde {\bf x}_{K}^{(q)}:  1\le q \le  \widetilde Q \big\}  \cup  \big\{  {\bf x}_{K}^{(q)}:  1\le q \le   Q \big\}, 
\end{align}
where $ {\mathbb S}_K^{(2)}:=\{{\bf x}_{K}^{(q)}\}_{1\le q \le Q}$ are the 2D quadrature points involved in the approximations \eqref{eq:2Ds2}--\eqref{eq:2Dfluxelem}. 

For convenience we will frequently use the following shorten notations
\begin{alignat*}{3}
& {\bf U}^{{\tt int}(K)}_{{\mathscr E},\mu }:=
{\bf U}_h^{{\tt int}(K)} ( {\bf x}_{\mathscr E}^{(\mu )} ), \qquad 
&{\bf U}^{{\rm ext}(K)}_{{\mathscr E},\mu }:=
{\bf U}_h^{{\tt ext}(K)} ( {\bf x}_{\mathscr E}^{(\mu )} ), \qquad
& p^{e,\star }_{{\mathscr E},\mu }:=
p_h^{e,\star } ( {\bf x}_{\mathscr E}^{(\mu )} ),
\\
& p^{e,{\tt int}(K)}_{{\mathscr E},\mu }:= 
p_h^{e,{\tt int}(K)} ( {\bf x}_{\mathscr E}^{(\mu )} ), \qquad 
&p^{e,{\rm ext}(K)}_{{\mathscr E},\mu }:=
p_h^{e,{\tt ext}(K)} ( {\bf x}_{\mathscr E}^{(\mu )} ), \qquad 
&\jump{p_h^e ( {\bf x}_{\mathscr E}^{(\mu)} ) }
:= p^{e,{\tt ext}(K)}_{{\mathscr E},\mu } - p^{e,{\tt int}(K)}_{{\mathscr E},\mu }.
\end{alignat*}

\begin{theorem}\label{thm:2Dhigh}
Assume that the projected stationary hydrostatic solution satisfies 
\begin{equation}\label{2DPPcondition:depe}
\rho^e_h( {\bf x} )>0, \quad p^e_h( {\bf x})>0, \quad     \forall {\bf x} \in \mathbb S_K,~~\forall K \in {\mathcal T}_h,
\end{equation}
and the numerical solution ${\bf U}_h$ satisfies 
\begin{equation}\label{2DPPcondition}
{\bf U}_h ( {\bf x} ) \in G,  \quad     \forall {\bf x} \in \mathbb S_K,~~\forall K \in {\mathcal T}_h,
\end{equation} 
then we have
\begin{equation}\label{2DPPhigh}
\overline {\bf U}_K + \Delta t {\bf L}_K ( {\bf U}_h  ) \in G,~~\forall K \in {\mathcal T}_h,
\end{equation}
under the CFL-type condition 
\begin{equation}\label{CFL2high2D}
\Delta t \left( 
\widetilde \alpha_K^F \frac{ 2  |{\mathscr E}| p^{e,\star }_{{\mathscr E},\mu } }
{ |K| p^{e,{\tt int}(K)}_{{\mathscr E},\mu } }
+ \widetilde \alpha_K^S \frac{ \widehat \varpi_{{\mathscr E}}^{(\mu)} }{ \omega_\mu }
\right) \le \frac{ \widehat \varpi_{{\mathscr E}}^{(\mu)} }{ \omega_\mu }, 
\qquad   1\le \mu \le N,~\forall{\mathscr E} \in \partial K,~\forall K \in {\mathcal T}_h,
\end{equation}
where 
\begin{align*}
&\widetilde \alpha_K^F := \max\Big\{
\max_{ {\mathscr E} \in \partial K, 1\le \mu \le N } \alpha_{{\bf n}_{{\mathscr E},K} } ( {\bf U}^{{\tt int}(K)}_{{\mathscr E},\mu } ), 
\max_{ {\mathscr E} \in \partial K, 1\le \mu \le N } \alpha_{ {\bf n}_{{\mathscr E},K} } ( {\bf U}^{{\tt ext}(K)}_{{\mathscr E},\mu } )
\Big\}, \qquad \widetilde \alpha_K^{S} = \widetilde \alpha_K^{S,1} + \widetilde \alpha_K^{S,2},
\\
&
\widetilde \alpha_K^{S,1} :=\max_{1\le q \le Q} \left\{  
\frac{ \left \|  {\bm \nabla} p_h^e (   {\bf x}_K^{(q)}  ) \right \| }  { 
	\rho_h^e ( {\bf x}_K^{(q )} )
	\sqrt{ 
		2 e_h (   {\bf x}_K^{(q)}  ) } } 
\right\}, 
\quad 
\widetilde \alpha_K^{S,2} :=
\frac{ \left\| \sum_{ {\mathscr E} \in \partial K }  \left( |{\mathscr E}| \sum_{\mu=1}^N \omega_\mu 
	\jump{p_h^e ( {\bf x}_{\mathscr E}^{(\mu)} ) }  
	\right){\bf n}_{ {\mathscr E}, K } \right\| }{ 
	2  |K| \overline \rho_K^e
	\sqrt{ 2 \overline e_K } }.
\end{align*}
\end{theorem}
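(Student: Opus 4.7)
The plan is to mirror the one-dimensional weak-positivity proof of Theorem~\ref{thm:1Dhigh}, replacing the Gauss--Lobatto decomposition of $\overline{\bf U}_j$ with the special 2D quadrature identity \eqref{eq:decomposition}. Concretely, I will first write
\[
\overline{\bf U}_K = \sum_{{\mathscr E}\in\partial K}\sum_{\mu=1}^N \widehat\varpi_{{\mathscr E}}^{(\mu)} {\bf U}^{{\tt int}(K)}_{{\mathscr E},\mu} + \sum_{q=1}^{\widetilde Q} \widetilde\varpi_q {\bf U}_h(\widetilde{\bf x}_K^{(q)}),
\]
all values lying in $G$ by assumption \eqref{2DPPcondition}. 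I then introduce a splitting parameter $\eta\in(0,1]$ and decompose
\[
\overline{\bf U}_K + \Delta t\, {\bf L}_K({\bf U}_h) = \underbrace{\eta\sum_q \widetilde\varpi_q {\bf U}_h(\widetilde{\bf x}_K^{(q)})}_{{\bf W}_1}
+ \underbrace{\eta\!\!\sum_{{\mathscr E},\mu}\widehat\varpi_{{\mathscr E}}^{(\mu)} {\bf U}^{{\tt int}(K)}_{{\mathscr E},\mu}
-\frac{\Delta t}{|K|}\sum_{{\mathscr E}}|{\mathscr E}|\sum_\mu \omega_\mu \widehat{\bf F}_{{\bf n}_{{\mathscr E},K}}({\bf x}_{\mathscr E}^{(\mu)})}_{{\bf W}_2}
+\underbrace{(1-\eta)\overline{\bf U}_K + \Delta t\,\overline{\bf S}_K}_{{\bf W}_3}.
\]
Clearly ${\bf W}_1\in\overline G$ by Lemma~\ref{lem3}, so the work is to choose $\eta$ (and a second parameter $\lambda$ inside ${\bf W}_3$) so that ${\bf W}_2\in G$ and ${\bf W}_3\in\overline G$.

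For ${\bf W}_2$, the key step is to pair, at each boundary quadrature node $({\mathscr E},\mu)$, the weight $\eta\widehat\varpi_{{\mathscr E}}^{(\mu)} {\bf U}^{{\tt int}(K)}_{{\mathscr E},\mu}$ with the corresponding modified-HLLC flux contribution $-\tfrac{\Delta t}{|K|}|{\mathscr E}|\omega_\mu \widehat{\bf F}_{{\bf n}_{{\mathscr E},K}}({\bf x}_{\mathscr E}^{(\mu)})$. Using the homogeneity ${\bf F}(a{\bf U})=a{\bf F}({\bf U})$ together with $\widehat{\bf F}_{{\bf n}_{{\mathscr E},K}}({\bf x}_{\mathscr E}^{(\mu)}) = {\bf F}^{hllc}\!\big(\tfrac{p^{e,\star}_{{\mathscr E},\mu}}{p^{e,{\tt int}(K)}_{{\mathscr E},\mu}}{\bf U}^{{\tt int}(K)}_{{\mathscr E},\mu}, \tfrac{p^{e,\star}_{{\mathscr E},\mu}}{p^{e,{\tt ext}(K)}_{{\mathscr E},\mu}}{\bf U}^{{\tt ext}(K)}_{{\mathscr E},\mu}; {\bf n}_{{\mathscr E},K}\big)$, each such paired block reduces to an expression of the form treated by Lemma~\ref{HLLC_std_euler2D00} (with $\zeta$-scaling $p^{e,\star}_{{\mathscr E},\mu}/p^{e,{\tt int}(K)}_{{\mathscr E},\mu}$), which lands in $G$ precisely when
$
\tfrac{\Delta t\, |{\mathscr E}| \omega_\mu}{|K|\eta\widehat\varpi_{{\mathscr E}}^{(\mu)}} \cdot \tfrac{p^{e,\star}_{{\mathscr E},\mu}}{p^{e,{\tt int}(K)}_{{\mathscr E},\mu}} \widetilde\alpha_K^F \le 1.
$
Summing via Lemma~\ref{lem3} yields ${\bf W}_2\in G$ under the first component of condition \eqref{CFL2high2D}.

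For ${\bf W}_3$, I expand $\overline{\bf S}_K$ using \eqref{eq:2Ds2}--\eqref{eq:2Ds3}. The cell-average reformulation of the source lets me split the $\overline\rho_K/\overline\rho_K^e$ and $\overline{\bf m}_K/\overline\rho_K^e$ pieces, turn the edge-flux-like boundary contribution $\sum_{\mathscr E}|{\mathscr E}|\sum_\mu \omega_\mu p_h^{e,\star}({\bf x}_{\mathscr E}^{(\mu)}){\bf n}_{{\mathscr E},K}$ into $\tfrac12\sum_{\mathscr E}|{\mathscr E}|\sum_\mu\omega_\mu\jump{p_h^e({\bf x}_{\mathscr E}^{(\mu)})}{\bf n}_{{\mathscr E},K}$ (because $\int_K \nabla p_h^e\,d{\bf x}=\sum_{\mathscr E}|{\mathscr E}|\sum_\mu \omega_\mu\tfrac12(p^{e,{\tt int}(K)}+p^{e,{\tt ext}(K)}){\bf n}_{{\mathscr E},K}$ exactly, by the divergence theorem applied to polynomials of degree $k$), and reassemble $\overline{\bf U}_K=\sum_q \varpi_q{\bf U}_h({\bf x}_K^{(q)})$ on $\mathbb S_K^{(2)}$. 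With a second convex parameter $\lambda\in[0,1]$ I then decompose
\[
{\bf W}_3 = \sum_{q=1}^Q \varpi_q {\bf W}_3^{(q)} + \overline{\bf W}_3,
\]
where each ${\bf W}_3^{(q)}$ is of the form $(1-\eta)\lambda{\bf U}_h({\bf x}_K^{(q)}) + \Delta t\, \tfrac{\nabla p_h^e({\bf x}_K^{(q)})}{\rho_h^e({\bf x}_K^{(q)})}\cdot(0,\rho_h,{\bf m}_h)^\top$ and $\overline{\bf W}_3$ contains the jump term. Lemma~\ref{lem:UcontrolS} then gives ${\bf W}_3^{(q)},\overline{\bf W}_3\in\overline G$ under constraints $\Delta t\widetilde\alpha_K^{S,1}\le(1-\eta)\lambda$ and $\Delta t\widetilde\alpha_K^{S,2}\le(1-\eta)(1-\lambda)$; Lemma~\ref{lem3} finishes this piece.

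The main obstacle, as in the 1D proof, is the combinatorial optimization over $\eta$ and $\lambda$: each edge quadrature node contributes its own flux CFL-type constraint (pair of weights $\widehat\varpi_{{\mathscr E}}^{(\mu)}/\omega_\mu$), while the source splits into interior and edge jump pieces with independent CFL-type constraints. I will perform the optimization pointwise in $({\mathscr E},\mu)$ by choosing $\eta$ and $\lambda$ so that the three constraints are simultaneously tight, which produces exactly the pointwise condition \eqref{CFL2high2D}. The more delicate subtlety is ensuring that the reformulated jump identity is exact on $\mathbb P^k$ (so that no accuracy is lost when replacing $\int_K \nabla p_h^e\,d{\bf x}$ by the edge-integral form); this relies on $p_h^e\in\mathbb V_h^k$ and the exactness hypothesis on the special quadrature. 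Concluding via Lemma~\ref{lem3} then yields $\overline{\bf U}_K+\Delta t\,{\bf L}_K({\bf U}_h) = {\bf W}_1+{\bf W}_2+{\bf W}_3\in G$, as desired.
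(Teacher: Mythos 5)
Your overall architecture matches the paper's: isolate the source term, use Lemma \ref{lem:UcontrolS} together with the cell-average reformulation (the divergence-theorem/jump identity you describe is exactly what the paper does to produce $\widetilde\alpha_K^{S,1}$ and $\widetilde\alpha_K^{S,2}$), and absorb the flux contribution against the boundary quadrature weights $\widehat\varpi_{{\mathscr E}}^{(\mu)}$ from the special quadrature \eqref{eq:decomposition}. The source-term half of your argument (${\bf W}_3$, the two parameters, Lemma \ref{lem3}) would go through.

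The genuine gap is in ${\bf W}_2$. Lemma \ref{HLLC_std_euler2D00} controls
$\zeta_1 {\bf U}_0 - \lambda\bigl[{\bf F}^{hllc}(\zeta_1{\bf U}_0,\zeta_2{\bf U}_1;{\bf n}) - {\bf F}(\zeta_1{\bf U}_0)\cdot{\bf n}\bigr]$,
i.e.\ the HLLC flux \emph{minus the physical flux of the interior state}, not the HLLC flux alone. Your block
$\eta\widehat\varpi_{{\mathscr E}}^{(\mu)}{\bf U}^{{\tt int}(K)}_{{\mathscr E},\mu} - \tfrac{\Delta t}{|K|}|{\mathscr E}|\omega_\mu\widehat{\bf F}_{{\bf n}_{{\mathscr E},K}}({\bf x}_{\mathscr E}^{(\mu)})$
is not of that form, so the lemma cannot be applied to it directly; unlike the 1D case, where the three-state Lemma \ref{HLLC_std_euler} handles a telescoping \emph{difference} of two HLLC fluxes, on a polygon the sum over edges does not telescope and the term $+{\bf F}(\zeta{\bf U}^{{\tt int}})\cdot{\bf n}$ you would need to insert does not cancel across edges for $k\ge 1$. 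The paper's proof inserts this physical flux at every edge quadrature node, spends half of the available interior-state budget on the resulting Lemma~\ref{HLLC_std_euler2D00} block (its ${\bf W}_1$) and the other half on the leftover $-{\bf F}(\zeta{\bf U}^{{\tt int}})\cdot{\bf n}$ piece via Lemma \ref{lem:LFflux} (its ${\bf W}_2$). That double use of ${\bf U}^{{\tt int}(K)}_{{\mathscr E},\mu}$ is precisely the origin of the factor $2$ in $\widetilde\alpha_K^F\frac{2|{\mathscr E}|p^{e,\star}_{{\mathscr E},\mu}}{|K|p^{e,{\tt int}(K)}_{{\mathscr E},\mu}}$ in \eqref{CFL2high2D}; your intermediate constraint $\tfrac{\Delta t|{\mathscr E}|\omega_\mu}{|K|\eta\widehat\varpi_{{\mathscr E}}^{(\mu)}}\tfrac{p^{e,\star}_{{\mathscr E},\mu}}{p^{e,{\tt int}(K)}_{{\mathscr E},\mu}}\widetilde\alpha_K^F\le 1$ is missing it, which is the symptom of the unjustified reduction. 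Once you add and subtract ${\bf F}\bigl(\tfrac{p^{e,\star}_{{\mathscr E},\mu}}{p^{e,{\tt int}(K)}_{{\mathscr E},\mu}}{\bf U}^{{\tt int}(K)}_{{\mathscr E},\mu}\bigr)\cdot{\bf n}_{{\mathscr E},K}$ at each node and split the weight, your optimization over $\eta$ and $\lambda$ recovers \eqref{CFL2high2D} exactly and the rest of your argument stands.
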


\begin{proof}
For the modified HLLC flux, applying Lemmas \ref{HLLC_std_euler2D00} and \ref{lem3} yields
{\small \begin{equation}\label{eq:2DproofW1}
	{\bf W}_1 := \frac{ \Delta t  }{ |K| } \widetilde \alpha_K^F \sum_{ {\mathscr E} \in \partial K } 
	|{\mathscr E}| \sum_{ \mu=1}^N \omega_\mu  
	\left(\frac{ p^{e,\star }_{{\mathscr E},\mu } }{ p^{e,{\tt int}(K)}_{{\mathscr E},\mu } } 
	{\bf U}^{{\tt int}(K)}_{{\mathscr E},\mu } \right. 
	\left.  
	- \frac{1}{ \widetilde \alpha_K^F } 
	\left[ \widehat{\bf F}_{{\bf n}_{{\mathscr E},K}} ( {\bf x}_{\mathscr E}^{(\mu)} )  - {\bf F} \left( \frac{ p^{e,\star }_{{\mathscr E},\mu } }{ p^{e,{\tt int}(K)}_{{\mathscr E},\mu } } 
	{\bf U}^{{\tt int}(K)}_{{\mathscr E},\mu }  \right) \cdot {\bf n}_{{\mathscr E},K}
	\right]  \right) \in G.  
	\end{equation}}
Using the formulas of ${\bf W}_1$ and ${\bf L}_K ( {\bf U}_h  )$ in 
\eqref{eq:2DproofW1} and 
\eqref{eq:AVEevolve2D}, respectively, we deduce that 
\begin{align}\nonumber
& \overline {\bf U}_K + \Delta t {\bf L}_K ( {\bf U}_h  ) - {\bf W}_1 - \Delta t 
\overline {\bf S}_K
\\  \nonumber
& = \overline {\bf U}_K  - 
\frac{ \Delta t  }{ |K| } \widetilde \alpha_K^F  \sum_{ {\mathscr E} \in \partial K } 
\left[  |{\mathscr E}| \sum_{ \mu=1}^N \omega_\mu   
\left(
\frac{ p^{e,\star }_{{\mathscr E},\mu } }{ p^{e,{\tt int}(K)}_{{\mathscr E},\mu } } 
{\bf U}^{{\tt int}(K)}_{{\mathscr E},\mu } 
+ \frac{1} { \widetilde \alpha_K^F }
{\bf F} \left( \frac{ p^{e,\star }_{{\mathscr E},\mu } }{ p^{e,{\tt int}(K)}_{{\mathscr E},\mu } } 
{\bf U}^{{\tt int}(K)}_{{\mathscr E},\mu }  \right) \cdot {\bf n}_{{\mathscr E},K}
\right)
\right]
\\ \nonumber
& = \overline {\bf U}_K  - 
2 \frac{ \Delta t  }{ |K| } \widetilde \alpha_K^F  \sum_{ {\mathscr E} \in \partial K } 
\left[  |{\mathscr E}| \sum_{ \mu=1}^N \omega_\mu   
\left(
\frac{ p^{e,\star }_{{\mathscr E},\mu } }{ p^{e,{\tt int}(K)}_{{\mathscr E},\mu } } 
{\bf U}^{{\tt int}(K)}_{{\mathscr E},\mu } 
\right)
\right]
\\ \label{eq:2Dsplit1}
& \quad + 
\frac{ \Delta t  }{ |K| } \widetilde \alpha_K^F  \sum_{ {\mathscr E} \in \partial K } 
\left[  |{\mathscr E}| \sum_{ \mu=1}^N \omega_\mu   
\left(
\frac{ p^{e,\star }_{{\mathscr E},\mu } }{ p^{e,{\tt int}(K)}_{{\mathscr E},\mu } } 
{\bf U}^{{\tt int}(K)}_{{\mathscr E},\mu } 
- \frac{1} { \widetilde \alpha_K^F }
\frac{ p^{e,\star }_{{\mathscr E},\mu } }{ p^{e,{\tt int}(K)}_{{\mathscr E},\mu } }  {\bf F} \left( 
{\bf U}^{{\tt int}(K)}_{{\mathscr E},\mu }  \right) \cdot {\bf n}_{{\mathscr E},K}
\right)
\right],
\end{align}
where the homogeneous property ${\bf F}(a{\bf U}) = a {\bf F}({\bf U})$ for any $a \in \mathbb R^+$ is used. 
Applying Lemmas \ref{lem:LFflux} and \ref{lem3} implies that 
\begin{equation}\label{eq:2DproofW2}
{\bf W}_2 := \frac{ \Delta t  }{ |K| } \widetilde \alpha_K^F \sum_{ {\mathscr E} \in \partial K } 
\left( |{\mathscr E}| \sum_{ \mu=1}^N \omega_\mu 
\frac{ p^{e,\star }_{{\mathscr E},\mu } }{ p^{e,{\tt int}(K)}_{{\mathscr E},\mu } }
\left({\bf U}^{{\tt int}(K)}_{{\mathscr E},\mu } 
- \frac{1} { \widetilde \alpha_K^F } {\bf F} \left( {\bf U}^{{\tt int}(K)}_{{\mathscr E},\mu }  \right) \cdot {\bf n}_{{\mathscr E},K} \right)  \right) \in G.
\end{equation}
Based on equation \eqref{eq:2Dsplit1} and the definition of ${\bf W}_2$, we rewrite 
$\overline {\bf U}_K + \Delta t {\bf L}_K ( {\bf U}_h  )$ as 
\begin{equation}\label{eq:2Dsplit2}
\overline {\bf U}_K + \Delta t {\bf L}_K ( {\bf U}_h  ) = {\bf W}_1 + {\bf W}_2 + {\bf W}_3,
\end{equation}
with 
$$
{\bf W}_3 := \overline {\bf U}_K  - 
2 \frac{ \Delta t  }{ |K| } \widetilde \alpha_K^F  \sum_{ {\mathscr E} \in \partial K } 
\left[  |{\mathscr E}| \sum_{ \mu=1}^N \omega_\mu   
\left(
\frac{ p^{e,\star }_{{\mathscr E},\mu } }{ p^{e,{\tt int}(K)}_{{\mathscr E},\mu } } 
{\bf U}^{{\tt int}(K)}_{{\mathscr E},\mu } 
\right)
\right] + \Delta t \overline {\bf S}_K. 
$$
Recall that $\overline{\bf S}_K= \big( 0, \overline {\bf S}_K^{[2]}, \overline S_K^{[3]} \big)^\top$ with 
$\overline {\bf S}_K^{[\ell]} = \frac1{|K|} \left\langle {\bf S}^{[\ell]}, 1 \right\rangle_K$, $\ell=2,3$. We can reformulate $\overline {\bf S}_K^{[2]}$ as 
\begin{align*}
\overline {\bf S}_K^{[2]} & = 
\sum_{ q=1}^Q \varpi_q \left( \frac{\rho_h ( {\bf x}_K^{(q)} ) }{\rho^e_h  ( {\bf x}_K^{(q)} ) } - 
\frac{ \overline \rho_K }{ \overline \rho_K^e  }
\right) {\bm \nabla} p_h^e (   {\bf x}_K^{(q)}  ) 
+ \frac{ \overline \rho_K }{ \overline \rho_K^e  } \left[ \frac1{|K|}
\sum_{ {\mathscr E} \in \partial K }  \left( |{\mathscr E}| \sum_{\mu=1}^N \omega_\mu p^{e,\star}_h ( {\bf x}_{\mathscr E}^{(\mu)} )  {\bf n}_{ {\mathscr E}, K } \right)
\right]
\\
& = \sum_{ q=1}^Q \varpi_q  \frac{\rho_h ( {\bf x}_K^{(q)} ) }{\rho^e_h  ( {\bf x}_K^{(q)} ) } 
{\bm \nabla} p_h^e (   {\bf x}_K^{(q)}  ) 
+ \frac{ \overline \rho_K }{ \overline \rho_K^e |K| } \left[
\sum_{ {\mathscr E} \in \partial K }  \left( |{\mathscr E}| \sum_{\mu=1}^N \omega_\mu p^{e,\star}_h ( {\bf x}_{\mathscr E}^{(\mu)} )  {\bf n}_{ {\mathscr E}, K } \right)
-\int_K  {\bm \nabla} p_h^e ({\bf x}) {\rm d} {\bf x}
\right]
\\
& = \sum_{ q=1}^Q \varpi_q  \frac{\rho_h ( {\bf x}_K^{(q)} ) }{\rho^e_h  ( {\bf x}_K^{(q)} ) } 
{\bm \nabla} p_h^e (   {\bf x}_K^{(q)}  ) 
+ \frac{ \overline \rho_K }{ \overline \rho_K^e  |K| } 
\sum_{ {\mathscr E} \in \partial K }  \left( |{\mathscr E}| \sum_{\mu=1}^N \omega_\mu p^{e,\star}_h ( {\bf x}_{\mathscr E}^{(\mu)} )  - \int_{ {\mathscr E}  } 
p_h^e {\rm d} s
\right){\bf n}_{ {\mathscr E}, K }
\\
& = \sum_{ q=1}^Q \varpi_q  \frac{\rho_h ( {\bf x}_K^{(q)} ) }{\rho^e_h  ( {\bf x}_K^{(q)} ) } 
{\bm \nabla} p_h^e (   {\bf x}_K^{(q)}  ) 
+ \frac{ \overline \rho_K }{ 2 \overline \rho_K^e |K| }
{\bf a},
\end{align*}
with ${\bf a}:= \sum_{ {\mathscr E} \in \partial K }  \left( |{\mathscr E}| \sum_{\mu=1}^N \omega_\mu 
\jump{p_h^e ( {\bf x}_{\mathscr E}^{(\mu)} ) }  
\right){\bf n}_{ {\mathscr E}, K }$, 
where we used the divergence theorem and   
the exactness of the quadrature rules for polynomials of degree up to $k$. Similarly, $\overline S_K^{[3]}$ can be written as 
\begin{equation*}
\overline S_K^{[3]}  = \sum_{ q=1}^Q \varpi_q \left( \frac{ {\bf m}_h ( {\bf x}_K^{(q)} ) }{\rho^e_h  ( {\bf x}_K^{(q)} ) } \right) \cdot
{\bm \nabla} p_h^e (   {\bf x}_K^{(q)}  ) 
+ \frac{ \overline {\bf m}_K }{ 2 \overline \rho_K^e |K| } 
\cdot 
{\bf a}.
\end{equation*}
Therefore, $ \widetilde \alpha_K^S  
\overline {\bf U}_K + \overline {\bf S}_K= ( \widetilde \alpha_K^{S,1} + \widetilde \alpha_K^{S,2} ) 
\overline {\bf U}_K + \overline {\bf S}_K$ can be reformulated as 
$$
\sum_{q=1}^Q \varpi_q \left[
\widetilde \alpha_K^{S,1} {\bf U}_h ( {\bf x}_K^{(q)} ) + \frac{1}{ \rho_h^e ( {\bf x}_K^{(q )} )}
\begin{pmatrix}
0\\
\rho_h ( {\bf x}_K^{(q)} )  {\bm \nabla} p_h^e (   {\bf x}_K^{(q)}  )
\\
{\bf m}_h ( {\bf x}_K^{(q)} ) \cdot  {\bm \nabla} p_h^e (   {\bf x}_K^{(q)}  )
\end{pmatrix}
\right] + \left[ \widetilde \alpha_K^{S,2} \overline {\bf U}_K 
+ \frac{1}{ 2 \overline \rho_K^e |K| }
\begin{pmatrix}
0\\
\overline \rho_K {\bf a}
\\
\overline {\bf m}_h \cdot {\bf a}  
\end{pmatrix}
\right].
$$
Since
$$
\frac{1}{ \rho_h^e ( {\bf x}_K^{(q )} ) }  \frac{ \left \|  {\bm \nabla} p_h^e (   {\bf x}_K^{(q)}  ) \right \| }  { \sqrt{ 
		2 e_h (   {\bf x}_K^{(q)}  ) } } \le  \widetilde \alpha_K^{S,1}, \qquad 
\frac{1}{ 2 \overline \rho_K^e |K| } \frac{ \| {\bf a} \| }{ \sqrt{ 2 \overline e_K } } = \widetilde \alpha_K^{S,2},
$$ 
we conclude that
$ \widetilde \alpha_K^S 
\overline {\bf U}_K + \Delta t \overline {\bf S}_K \in \overline G 
$, 
according to Lemmas \ref{lem:UcontrolS} and \ref{lem3}. It follows that 
\begin{equation}\label{eq:W4belongG}
{\bf W}_4 := \Delta t  \left( \widetilde \alpha_K^S 
\overline {\bf U}_K +  \overline {\bf S}_K \right) \in \overline G. 
\end{equation}
Subtracting ${\bf W}_4$ from ${\bf W}_3$ gives 
\begin{equation}\label{eq:W3W4}
{\bf W}_3 - {\bf W}_4 = ( 1- \Delta t  \widetilde \alpha_K^S ) \overline {\bf U}_K  - 
2 \frac{ \Delta t  }{ |K| } \widetilde \alpha_K^F  \sum_{ {\mathscr E} \in \partial K } 
\left[  |{\mathscr E}| \sum_{ \mu=1}^N \omega_\mu   
\left(
\frac{ p^{e,\star }_{{\mathscr E},\mu } }{ p^{e,{\tt int}(K)}_{{\mathscr E},\mu } } 
{\bf U}^{{\tt int}(K)}_{{\mathscr E},\mu } 
\right)
\right]. 
\end{equation}
Note that the exactness of the quadrature rule \eqref{eq:decomposition} for polynomials of degree up to $k$ leads to
\begin{equation}\label{KK}
\overline {\bf U}_K  = 
\sum_{{\mathscr E} \in \partial K} \sum_{\mu=1}^N \widehat \varpi_{{\mathscr E}}^{(\mu)} {\bf U}^{{\tt int}(K)}_{{\mathscr E},\mu } + \sum_{ q=1 }^{\widetilde Q} \widetilde \varpi_q   
{\bf U}_h^{{\tt int}(K)} ( \widetilde {\bf x}_K^{(q)} )
=: \sum_{{\mathscr E} \in \partial K} \sum_{\mu=1}^N \widehat \varpi_{{\mathscr E}}^{(\mu)} {\bf U}^{{\tt int}(K)}_{{\mathscr E},\mu } + {\bf W}_5,
\end{equation}
and obviously we have ${\bf W}_5 \in \overline G$. Substituting \eqref{KK} into \eqref{eq:W3W4} yields 
\begin{equation*}
{\bf W}_3 - {\bf W}_4 =  ( 1- \Delta t  \widetilde \alpha_K^S ) 
{\bf W}_5 + 
\sum_{{\mathscr E} \in \partial K} \sum_{\mu=1}^N \omega_\mu
\left[ \frac{ \widehat \varpi_{{\mathscr E}}^{(\mu)} }{ \omega_\mu }  -  
\Delta t \left( 
\frac{ 2 \widetilde \alpha_K^F |{\mathscr E}| p^{e,\star }_{{\mathscr E},\mu } }
{ |K| p^{e,{\tt int}(K)}_{{\mathscr E},\mu } }
+ \widetilde \alpha_K^S \frac{ \widehat \varpi_{{\mathscr E}}^{(\mu)} }{ \omega_\mu }
\right)
\right] {\bf U}^{{\tt int}(K)}_{{\mathscr E},\mu }, 
\end{equation*}
which belongs to $\overline G$, by Lemma \ref{lem3}, under the CFL condition \eqref{CFL2high2D}. 
Recall that we have shown in \eqref{eq:W4belongG} that ${\bf W}_4 \in \overline G$. 
It then follows that ${\bf W}_3 = ( {\bf W}_3 - {\bf W}_4) + {\bf W}_4 \in \overline G $. 
Recalling $ {\bf W}_1 \in G$ and ${\bf W}_2 \in G$ in 
\eqref{eq:2DproofW1} and \eqref{eq:2DproofW2}, and from equation
\eqref{eq:2Dsplit2} and Lemma \ref{lem3}, we finally conclude \eqref{2DPPhigh}. 
This completes the proof.  
\end{proof}


Theorem \ref{thm:2Dhigh} provides a sufficient condition \eqref{2DPPcondition} for the proposed high-order well-balanced
DG schemes \eqref{eq:2DDGweak} to be positivity-preserving, when 
an SSP-RK time discretization is used. The condition \eqref{2DPPcondition} can again be enforced by a 
simple positivity-preserving limiter similar to the 1D case; see equations \eqref{eq:limiter}--\eqref{eq:limitertheta} with the 1D point set ${\mathbb S}_j$ replaced by the 2D point set \eqref{2DPPset} accordingly.  
With the limiter applied at each stage of the SSP-RK time steps, the fully discrete DG schemes are positivity-preserving.

\subsubsection{Illustration of some details on Cartesian meshes}
Assume that the mesh is rectangular with cells $\{[x_{i-1/2},x_{i+1/2}]\times [y_{\ell-1/2},y_{\ell+1/2}] \}$ 
and spatial step-sizes $\Delta x_i=x_{i+1/2}-x_{i-1/2}$ and $\Delta y_\ell=y_{\ell+1/2}-y_{\ell-1/2}$ in $x$- and $y$-directions respectively, where $(x,y)$ denotes the 2D spatial coordinate variables. 
Let ${\mathbb S}_i^x=\{ x_i^{(\mu)}  \}_{\mu=1}^N$
and ${\mathbb S}_\ell^y=\{ y_\ell^{(\mu)}  \}_{\mu=1}^N$ 
denote the $N$-point Gauss quadrature nodes in the intervals 
$[x_{i-1/2},x_{i+1/2}]$ and 
$[y_{\ell-1/2},y_{\ell+1/2}]$ respectively. 
For the cell $K=[x_{i-1/2},x_{i+1/2}]\times [y_{\ell-1/2},y_{\ell+1/2}]$, 
the point sets $ {\mathbb S}_K^{(1)}$ and ${\mathbb S}_K^{(2)}$ in \eqref{2DPPset} are given by (cf.~\cite{zhang2010})
\begin{equation}\label{eq:RectS}
{\mathbb S}_K^{(1)} = \big(  \widehat{\mathbb S}_i^x \otimes 
{\mathbb S}_\ell^y \big) \cup \big(  {\mathbb S}_i^x \otimes 
\widehat{\mathbb S}_\ell^y \big),
\qquad
{\mathbb S}_K^{(2)}={\mathbb S}_i^x \otimes {\mathbb S}_\ell^y
\end{equation}
where 
$\widehat{\mathbb S}_i^x=\{ \widehat x_i^{(\nu)}  \}_{\nu=1}^{L}$
and $\widehat {\mathbb S}_\ell^y=\{\widehat y_\ell^{(\nu)}  \}_{\nu=1}^{L}$ 
denote the $L$-point (${L} \ge \frac{k+3}2$) Gauss--Lobatto quadrature nodes in the intervals 
$[x_{i-1/2},x_{i+1/2}]$ and 
$[y_{\ell-1/2},y_{\ell+1/2}]$ respectively.  
With ${\mathbb S}_K^{(1)}$ in \eqref{eq:RectS}, a special 2D quadrature \cite{zhang2010} satisfying \eqref{eq:decomposition} can be constructed:
\begin{equation} \label{eq:U2Dsplit}
\begin{split}
\frac{1}{|K|}\int_K u({\bf x}) d {\bf x}
&= \sum \limits_{\mu = 1}^{ N}   \frac{ \Delta x_i \widehat \omega_1 \omega_\mu }{ \Delta x_i + \Delta y_\ell }   \left(  
u\big( x_i^{(\mu)},y_{\ell-\frac12} \big) 
+ u\big( x_i^{(\mu)},y_{\ell+\frac12} \big) 
\right)
\\
&
+  \sum \limits_{\mu = 1}^{ N} \frac{ \Delta y_\ell \widehat \omega_1 \omega_\mu }{ \Delta x_i + \Delta y_\ell }  \left( u \big(x_{i-\frac12},y_\ell^{(\mu)}\big) +
u\big(x_{i+\frac12},y_\ell^{(\mu)}\big) \right)
\\
& +  \sum \limits_{\nu = 2}^{{L}-1} \sum \limits_{\mu = 1}^{N} 
\frac{ \widehat \omega_\nu \omega_\mu  }{\Delta x_i + \Delta y_\ell}
\left( \Delta x_i u\big(  x_i^{(\mu)},\widehat y_\ell^{(\nu)} \big) + \Delta y_\ell u\big(\widehat x_i^{(\nu)},y_\ell^{(\mu)}\big) \right),
\quad~ \forall u \in {\mathbb P}^k(K),
\end{split}
\end{equation}
where $\{\widehat w_\mu\}_{\mu=1}^{L}$ are the weights of the $L$-point Gauss--Lobatto quadrature. 
If labeling the bottom, right, top and left edges of $K$ as  
${\mathscr E }_1$, ${\mathscr E }_2$, ${\mathscr E }_3$ and ${\mathscr E }_4$, respectively, the equation \eqref{eq:U2Dsplit} implies, for $1\le \mu \le N$, that
$
\varpi_{ {\mathscr E }_j  }^{(\mu)} = \frac{ \Delta x_i \widehat \omega_1 \omega_\mu}{ \Delta x_i + \Delta y_\ell },~j=1,3;~
\varpi_{ {\mathscr E }_j  }^{(\mu)} = \frac{ \Delta y_\ell \widehat \omega_1 \omega_\mu}{ \Delta x_i + \Delta y_\ell },~j=2,4.
$ 
According to Theorem \ref{thm:2Dhigh}, the CFL condition \eqref{CFL2high2D} for our positivity-preserving DG schemes on Cartesian meshes is 
\begin{equation}\label{eq:CFL:2DCart}
\Delta t \left[ 2 \widetilde \alpha_K^F 
\frac{  p^{e,\star }_{{\mathscr E}_j,\mu } }
{ p^{e,{\tt int}(K)}_{{\mathscr E}_j,\mu } }
\left( \frac{1}{\Delta x_i} 
+ \frac{1}{\Delta y_\ell} \right)   + \widetilde \alpha_K^S \widehat \omega_1 \right]
\le \widehat \omega_1
,\quad \forall
K\in{\mathcal T}_h,~1\le j \le 4,
\end{equation}
where $\widehat  \omega_1= \frac{1}{L(L-1)}$. 
Assume the mesh is regular and define $h =\max_{i,\ell} \{ \Delta x_i, \Delta y_\ell \}$, then   
for smooth $p^e({\bf x})$, it holds 
$$
\frac{  p^{e,\star }_{{\mathscr E}_j,\mu } }
{ p^{e,{\tt int}(K)}_{{\mathscr E}_j,\mu } }
= \frac12 + \frac{
p_h^{e,{\tt ext}(K)}  ( {\bf x}_{{\mathscr E}_j }^{(\mu)}  ) 
}{2 p_h^{e,{\tt int}(K)}  ( {\bf x}_{{\mathscr E}_j }^{(\mu)}  )} 
= 1 + {\mathcal O}(h^{k+1}),
$$
whose effect in the CFL condition \eqref{eq:CFL:2DCart} can be ignored. 

\section{Numerical tests}\label{sec:examples}
This section presents several 1D and 2D examples to demonstrate the well-balanced and positivity-preserving properties 
of the proposed DG methods on uniform Cartesian meshes. 
Without loss of generality, we only show the numerical results obtained by our third-order ($k=2$) DG method with the explicit third-order SSP-RK time discretization \eqref{SSP1}. 
For the sake of comparison, we will also show the numerical results of 
the traditional non-well-balanced (denoted as ``non-WB'') DG schemes with the straightforward 
source term discretization and the original HLLC flux. 
Unless otherwise stated, we use a CFL number of $0.2$ and the ideal equation of state \eqref{eq:IEOS} with $\gamma=1.4$. 
In all the tests, the method is implemented by using C++ language with double precision.

\subsection{Example 1: One-dimensional polytropic equilibrium}\label{sec:ex2} 
This test is used to investigate the performance of the proposed schemes near the polytropic equilibrium states 
\cite{KM2014}. 
Under the gravitational field $\phi(x) = gx$, the stationary hydrostatic solutions are 
\begin{equation} \label{state6}
\rho^e (x) = \left( \rho_0^{\gamma -1} - \frac{1}{K_0} \frac{\gamma -1}{\gamma} g x \right)^{ \frac1{\gamma -1} }, \qquad u^e(x)=0, \qquad p^e(x)= K_0 \left(\rho^e(x)\right)^\gamma, 
\end{equation}
with $g=1$, $\gamma = 5/3$, $\rho_0 = p_0 = 1$, and $K_0 =  p_0 / \rho_0^{\gamma}$ on a computational domain $[0,2]$.
%

We first use this example to check the well-balancedness of our DG methods. The
initial data are taken as the stationary hydrostatic solutions \eqref{state6}. 
We simulate this problem up to $t = 4$ by using our third-order well-balanced DG scheme with different mesh points, and list the $l^1$-errors of numerical solutions in Table \ref{tab:Ex2}. These errors are evaluated between the numerical solutions and the projected stationary hydrostatic solutions.
It is clearly observed that the numerical errors are all at the level of round-off error, 
which verify the desired well-balanced property.

\begin{table}[htbp]
\centering
\caption{\small Example 1: 
	$l^1$-errors on different meshes of $M$ uniform cells.
}\label{tab:Ex2}
\begin{tabular}{cccc}
	\hline
	$M$ & errors in $\rho$ & errors in $m$ & errors in $E$ \\
	\hline
	50 & 1.0682e-14 & 1.0332e-14 & 4.4756e-16
	\\
	\hline
	100 & 3.6074e-14 & 4.5115e-14 & 6.5160e-15
	\\
	\hline 
	200 & 5.2993e-14 & 4.9258e-14 & 7.8335e-15
	\\
	\hline
\end{tabular}
\end{table}

Next, a small perturbation is imposed to the stationary hydrostatic state \eqref{state6}, so as to compare the performance of
well-balanced and non-WB DG schemes in simulating the evolution of such small perturbation.
More specifically, we add a periodic velocity perturbation
$$
u(x,t) = A \sin(4 \pi t),
$$
with $A = 10^{-6}$ to the system on the left boundary $x=0$.
The solutions are computed until $t=1.5$, before the waves propagate to the right boundary $x=2$. 
Figure \ref{fig:ex2a} displays the pressure perturbation and the velocity at
$t= 1.5$, computed by the proposed third-order well-balanced DG scheme on a mesh of $100$ uniform cells,  against the reference solutions computed on a much refined mesh of $1000$ cells.
For comparison, we also perform the third-order non-WB DG method 
and show its results in the same figure. 
As we can see, the results by the well-balanced
DG method agree well with the reference ones, while the results by the non-WB DG method 
do not match the reference ones especially in the region where $x>1.5$.
This demonstrates that the well-balanced methods are advantageous and more accurate for 
resolving small amplitude perturbations to 
equilibrium states.

\begin{figure}[htbp]
\centering
\begin{subfigure}[b]{0.48\textwidth}
	\begin{center}
		\includegraphics[width=0.98\textwidth]{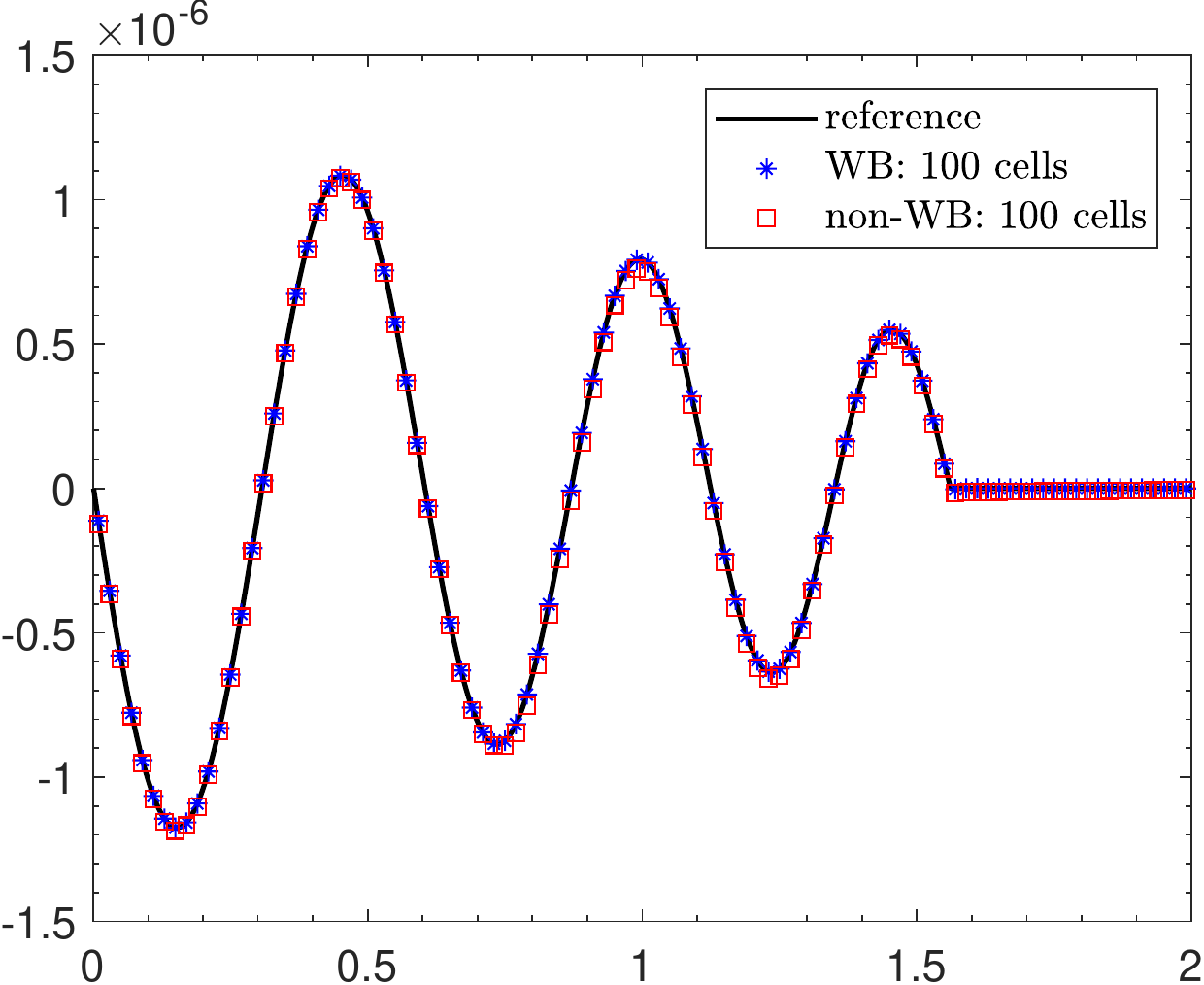}
	\end{center}
\end{subfigure}
\begin{subfigure}[b]{0.48\textwidth}
	\begin{center}
		\includegraphics[width=0.98\textwidth]{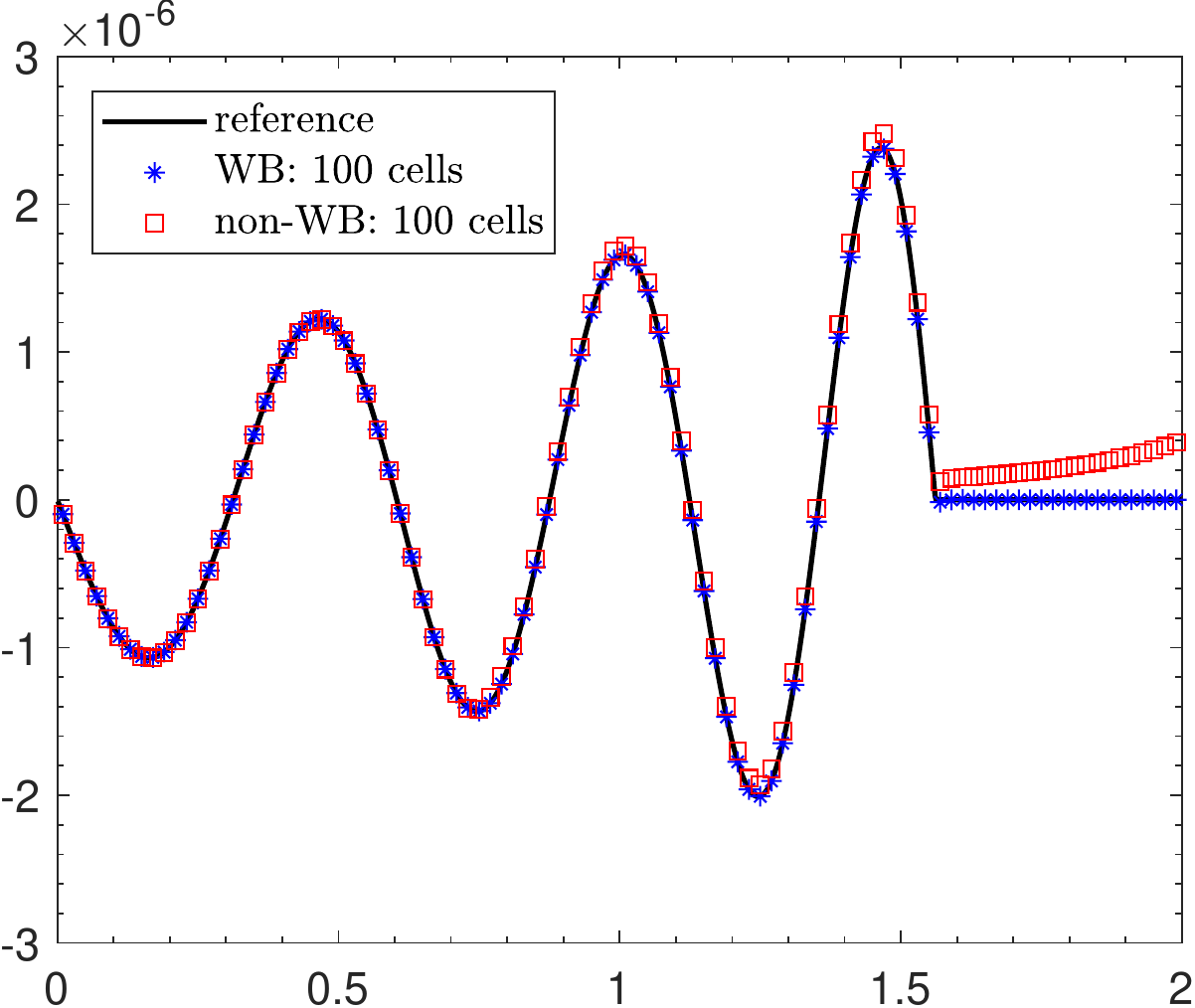}
	\end{center}
\end{subfigure}	
\caption{\small
	Example 1: Small amplitude waves with $A=10^{-6}$ traveling up the polytropic hydrostatic atmosphere. 
	The numerical solutions of well-balanced method (denoted by ``WB'') and non-WB method are obtained on the mesh of $100$ uniform cells. The reference solutions are computed by the well-balanced method using $1000$ mesh points. Left: pressure perturbation; Right: velocity.
}\label{fig:ex2a} 
\end{figure}

%

In the last test case of this example, we conduct the same simulation but with a large perturbation $A = 0.1$. 
We again evolve the simulation until $t=1.5$. 
Because the discontinuities are formed in the final solution, 
the WENO limiter \cite{Qiu2005} 
is implemented right before the positivity-preserving limiting procedure with the aid of the local
characteristic decomposition within a few  ``trouble'' cells detected adaptively. 
The numerical solutions by both the well-balanced and non-WB DG methods are shown in Figure \ref{fig:ex2},  against the reference solutions. One can see that both DG methods produce satisfactory results. 
This agrees with the normal expectation that the well-balanced methods perform similarly as non-WB methods in capturing solutions far away from steady states.

\begin{figure}[htbp]
\centering
\begin{subfigure}[b]{0.48\textwidth}
	\begin{center}
		\includegraphics[width=0.98\textwidth]{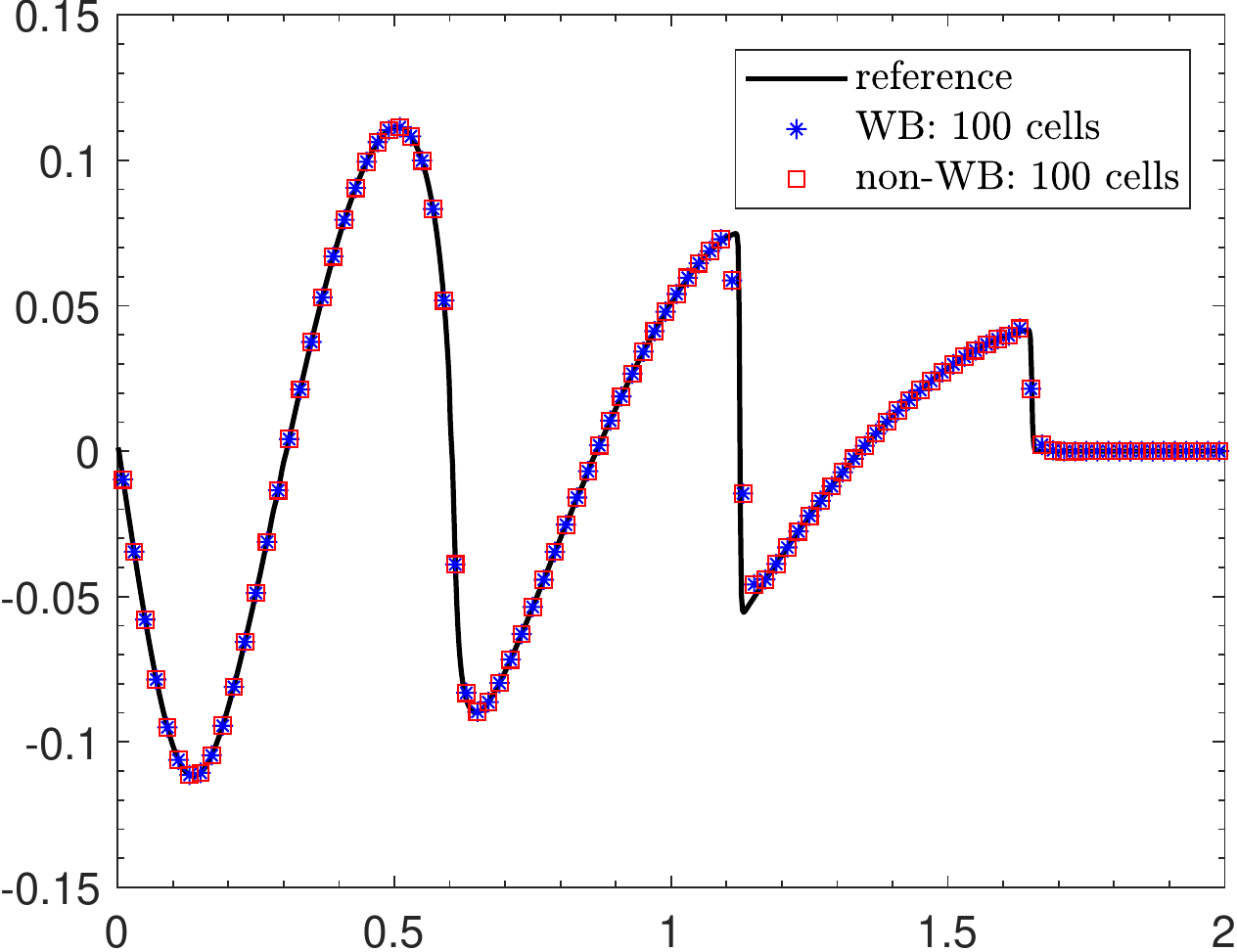}
	\end{center}
\end{subfigure}
\begin{subfigure}[b]{0.48\textwidth}
	\begin{center}
		\includegraphics[width=0.98\textwidth]{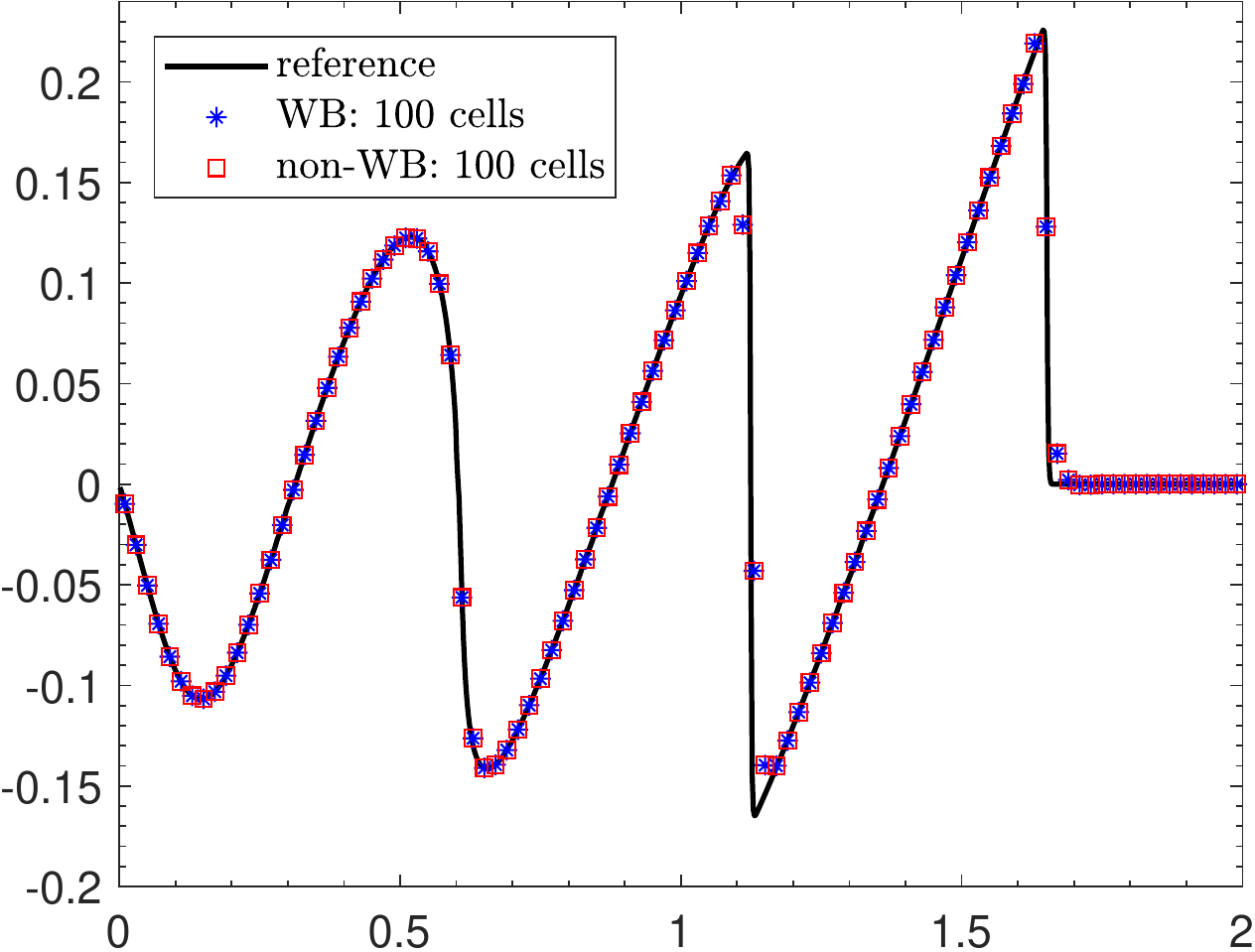}
	\end{center}
\end{subfigure}	
\caption{\small
	Same as Figure \ref{fig:ex2a} except for large amplitude waves with $A=0.1$ traveling up the polytropic hydrostatic atmosphere. Left: pressure perturbation; Right: velocity.
}\label{fig:ex2} 
\end{figure}

\subsection{Example 2: Rarefaction test with low density and low pressure}

To demonstrate the positivity-preserving property, we consider an extreme rarefaction test under a quadratic gravitational potential $\phi(x)=x^2/2$ centered around $x=0$. The computational domain is taken as $[-1,1]$, and the initial state is the same as a Riemann problem in \cite{zhang2010b}, given by  
$$
\rho(x,0)=7, \qquad p(x,0)=0.2, \qquad u(x,0)=\begin{cases}
-1, & \quad x<0,\\
1, & \quad x>0.
\end{cases}
$$ 
with outflow boundary conditions at $x=-1$ and $x=1$. 
This problem involves extremely low density and pressure, so that the positivity-preserving limiter should be employed. 
The CFL number is set as $0.15$, which is slightly smaller than $\widehat \omega_1 = \frac1{6}$. 
Figure~\ref{fig:ex4} gives 
the numerical results at $t=0.25$, obtained
by our positivity-preserving third-order well-balanced DG scheme, 
on a mesh with $800$ cells, compared with reference solutions obtained with much refined $128000$ cells. 
It is seen that the low density and low pressure wave structures are well captured by the proposed method. 
During the whole simulation, our scheme exhibits good robustness. We observe that it is necessary to enforce 
the condition \eqref{PPcondition}, otherwise the DG code will break down due to nonphysical solution.

\begin{figure}[htbp]
\centering
\begin{subfigure}[b]{0.3\textwidth}
	\begin{center}
		\includegraphics[width=1\textwidth]{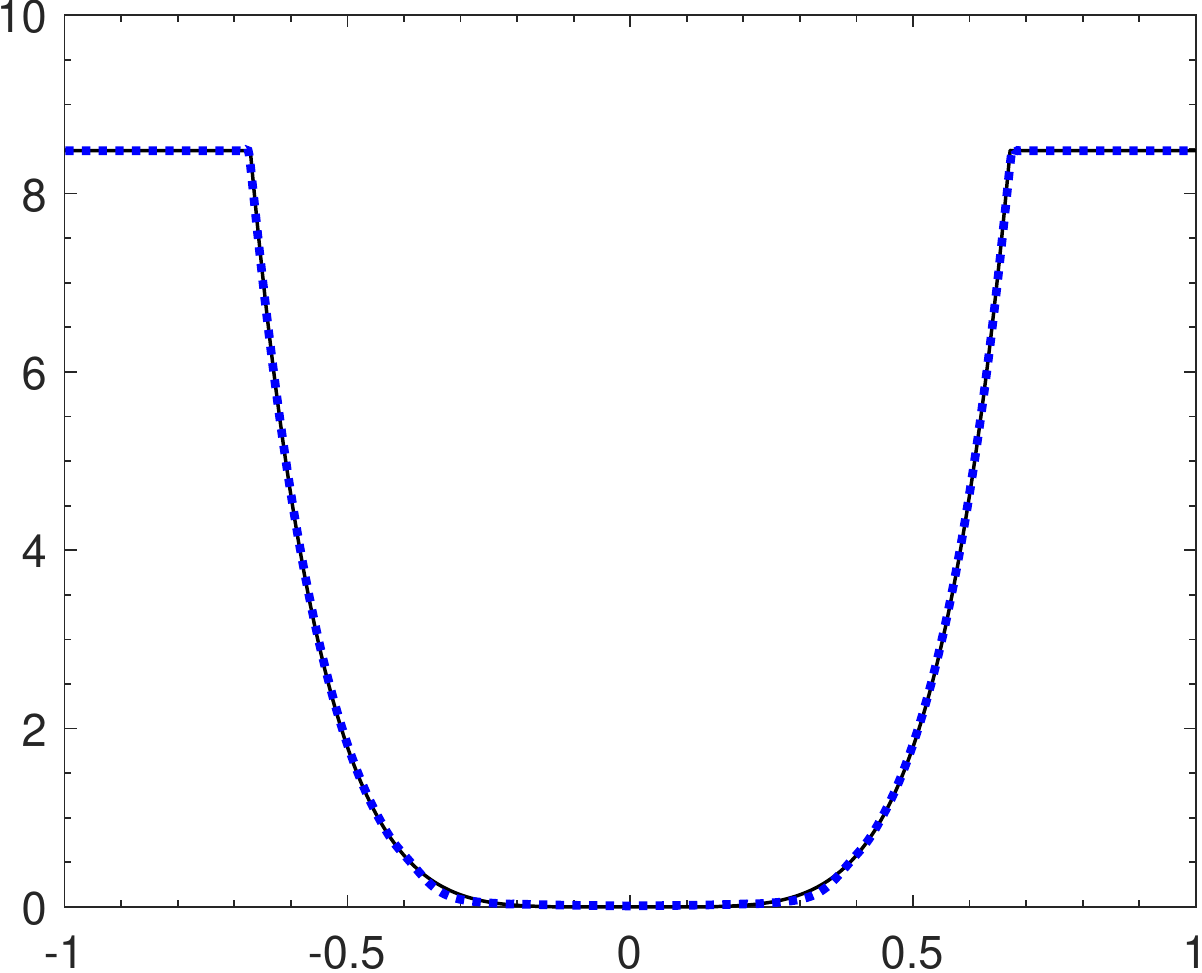}
	\end{center}
	\subcaption{$\rho$}
\end{subfigure}
\begin{subfigure}[b]{0.3\textwidth}
	\begin{center}
		\includegraphics[width=1\textwidth]{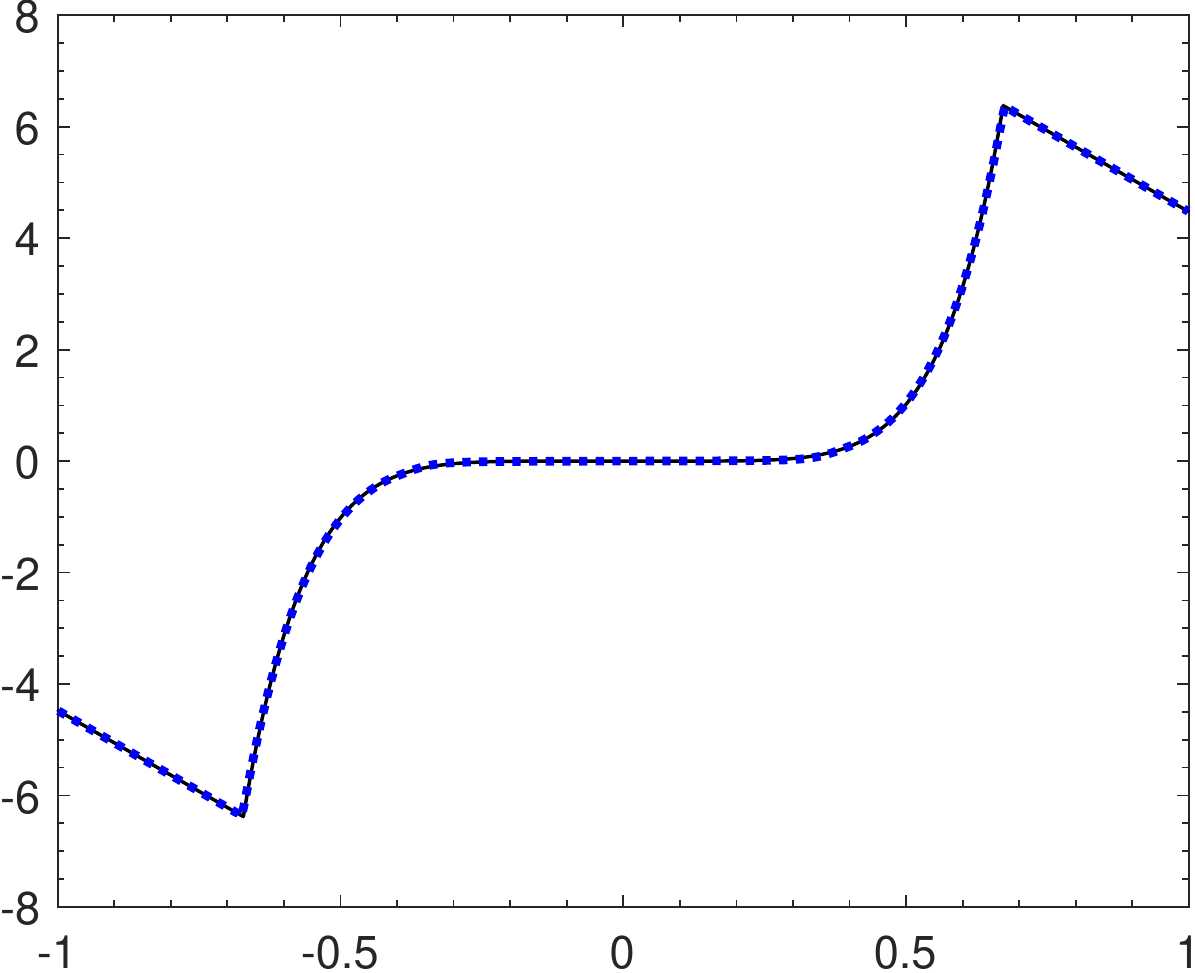}
	\end{center}
	\subcaption{$m$}
\end{subfigure}	
\begin{subfigure}[b]{0.3\textwidth}
	\begin{center}
		\includegraphics[width=1\textwidth]{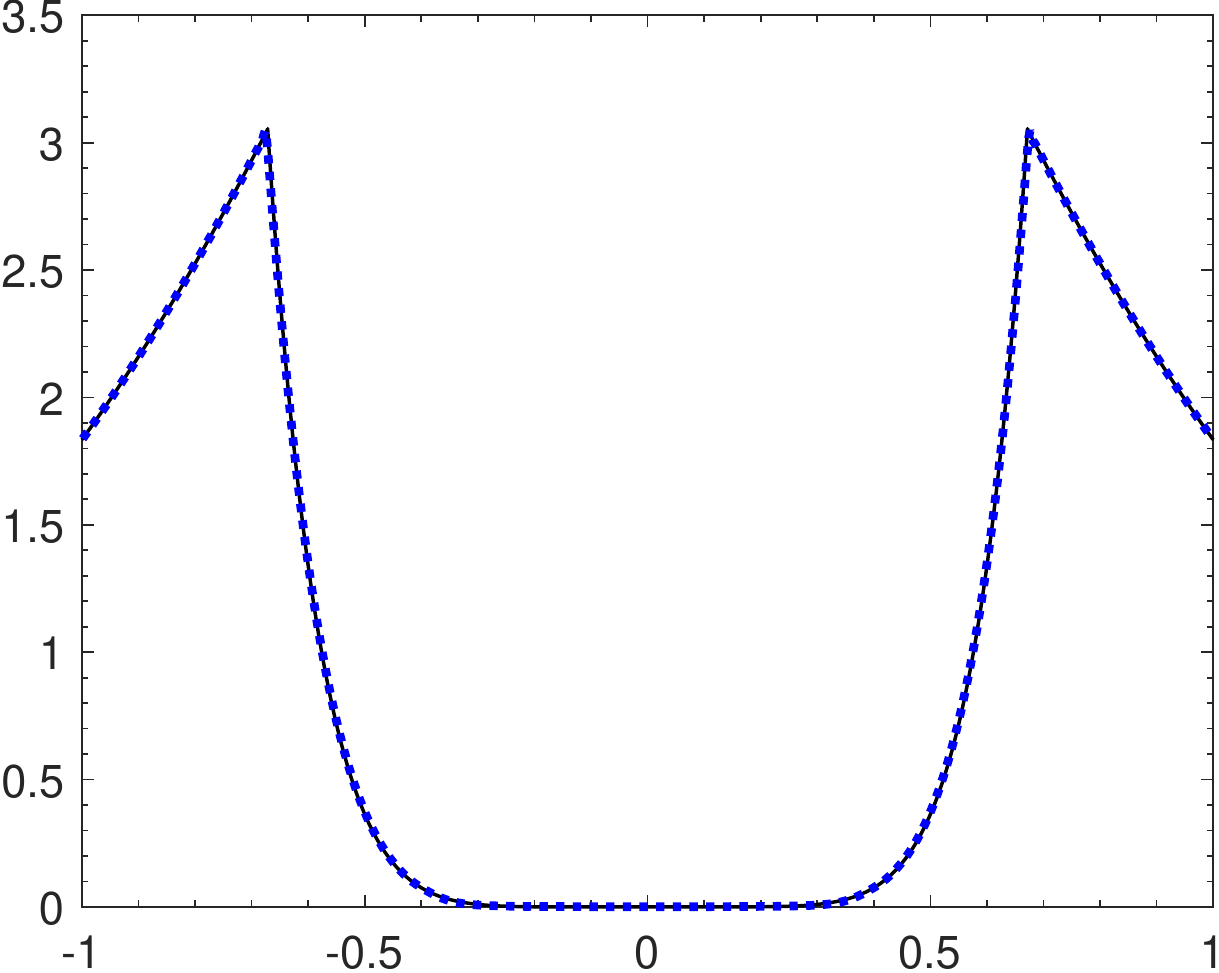}
	\end{center}
	\subcaption{$E$}
\end{subfigure}
\caption{\small
	Example 2: Density, momentum and energy for the rarefaction test 
	at $t=0.6$ obtained by the positivity-preserving well-balanced DG scheme with 800 cells 
	(dotted lines) and 128000 cells (solid lines). 
}\label{fig:ex4} 
\end{figure}

\subsection{Example 3: Leblanc problem in linear gravitational field}

In this test, we consider an extension of the standard 1D Leblanc shock tube problem to the gravitational case with $\phi(x)=gx$ and $g=1$. The initial condition of this problem is given by 
$$
(\rho, u, p) (x,0)=\begin{cases}
(2,~0,~10^9), & \quad x<5,\\
(10^{-3},~0,~1), & \quad x>5.
\end{cases}
$$ 
This problem is highly challenging due to the presence of the strong jumps in the initial density and pressure. 
The computational domain is taken as $[0,10]$ with reflection boundary conditions at $x=0$ and $x=10$. 
To fully resolve the wave structure, a fine mesh is required for such
test. 
In the computations, the CFL number is taken as $0.15$. 
As the exact solution contains strong discontinuities, the WENO limiter \cite{Qiu2005} 
is implemented right before the positivity-preserving limiting procedure with the aid of the local
characteristic decomposition within the adaptively detected ``trouble'' cells. 
Figure~\ref{fig:ex5} displays 
our numerical results at $t=0.25$, obtained
by the third-order positivity-preserving well-balanced DG scheme, 
on a mesh with $1600$ cells, compared with reference solutions obtained with much refined $6400$ cells. 
We see that the strong discontinuities are captured by the proposed method with high resolution.

\begin{figure}[htbp]
\centering
\begin{subfigure}[b]{0.32\textwidth}
	\begin{center}
		\includegraphics[width=1\textwidth]{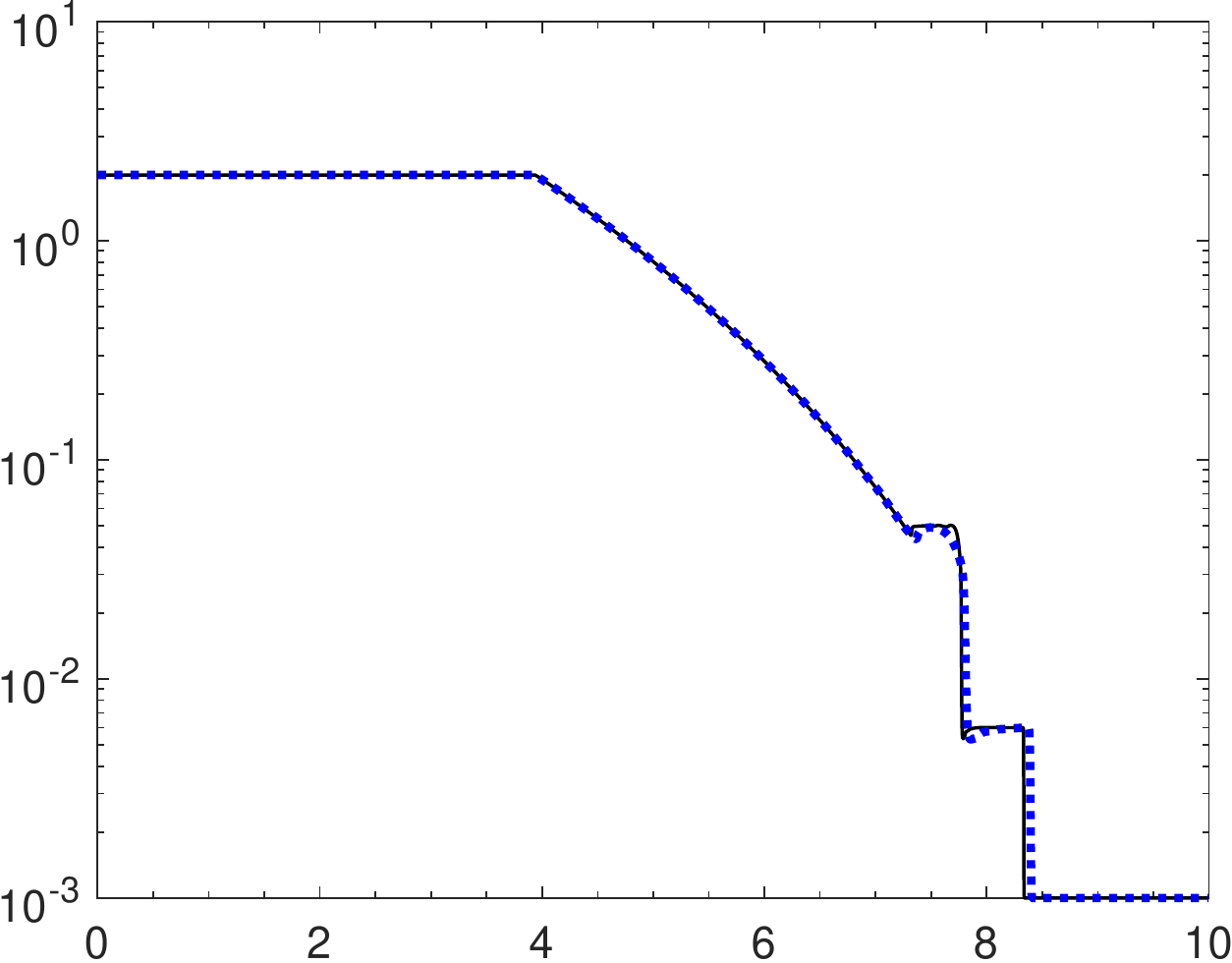}
	\end{center}
	\subcaption{$\rho$}
\end{subfigure}
\begin{subfigure}[b]{0.32\textwidth}
	\begin{center}
		\includegraphics[width=1\textwidth]{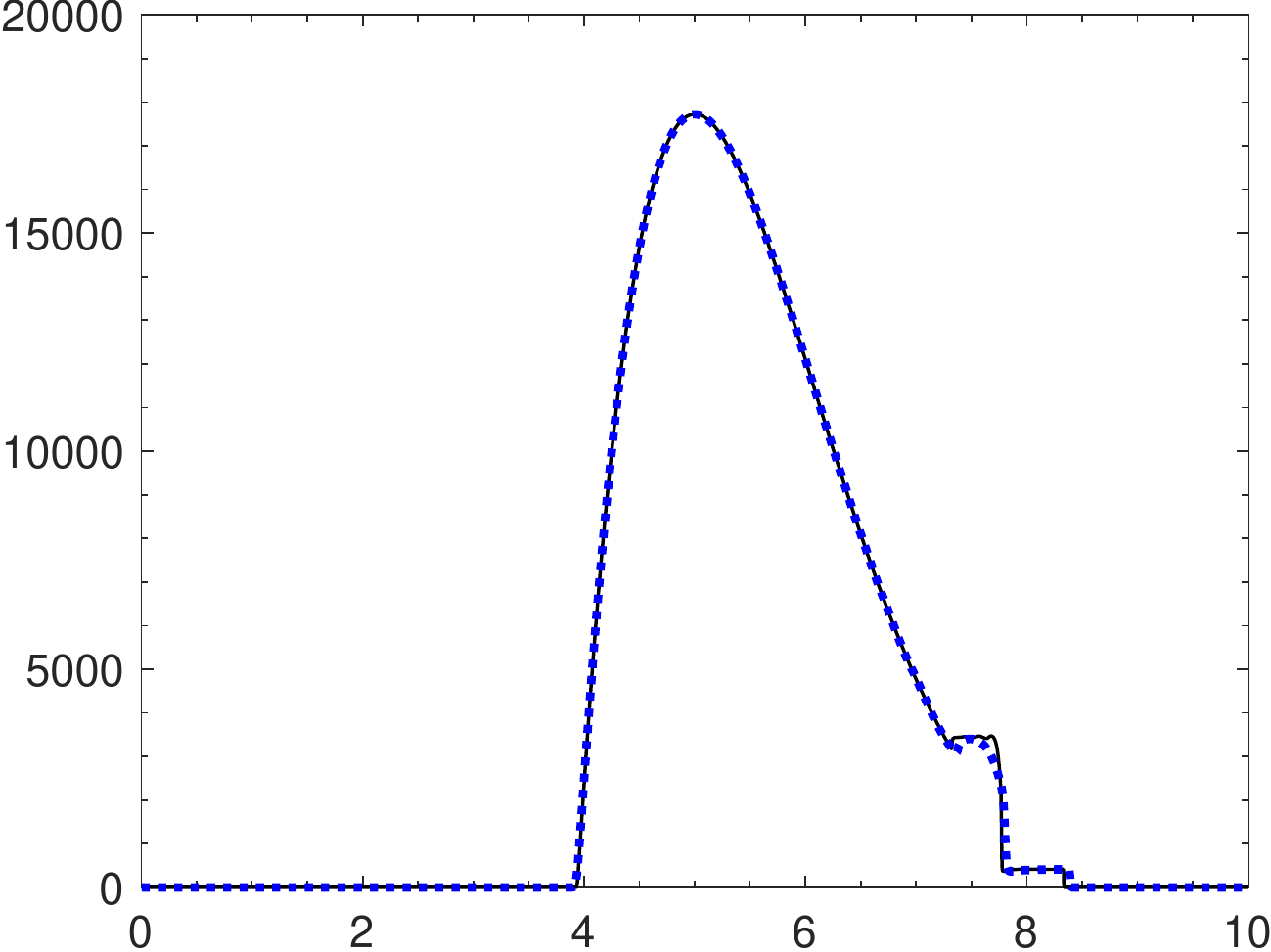}
	\end{center}
	\subcaption{$m$}
\end{subfigure}	
\begin{subfigure}[b]{0.32\textwidth}
	\begin{center}
		\includegraphics[width=1\textwidth]{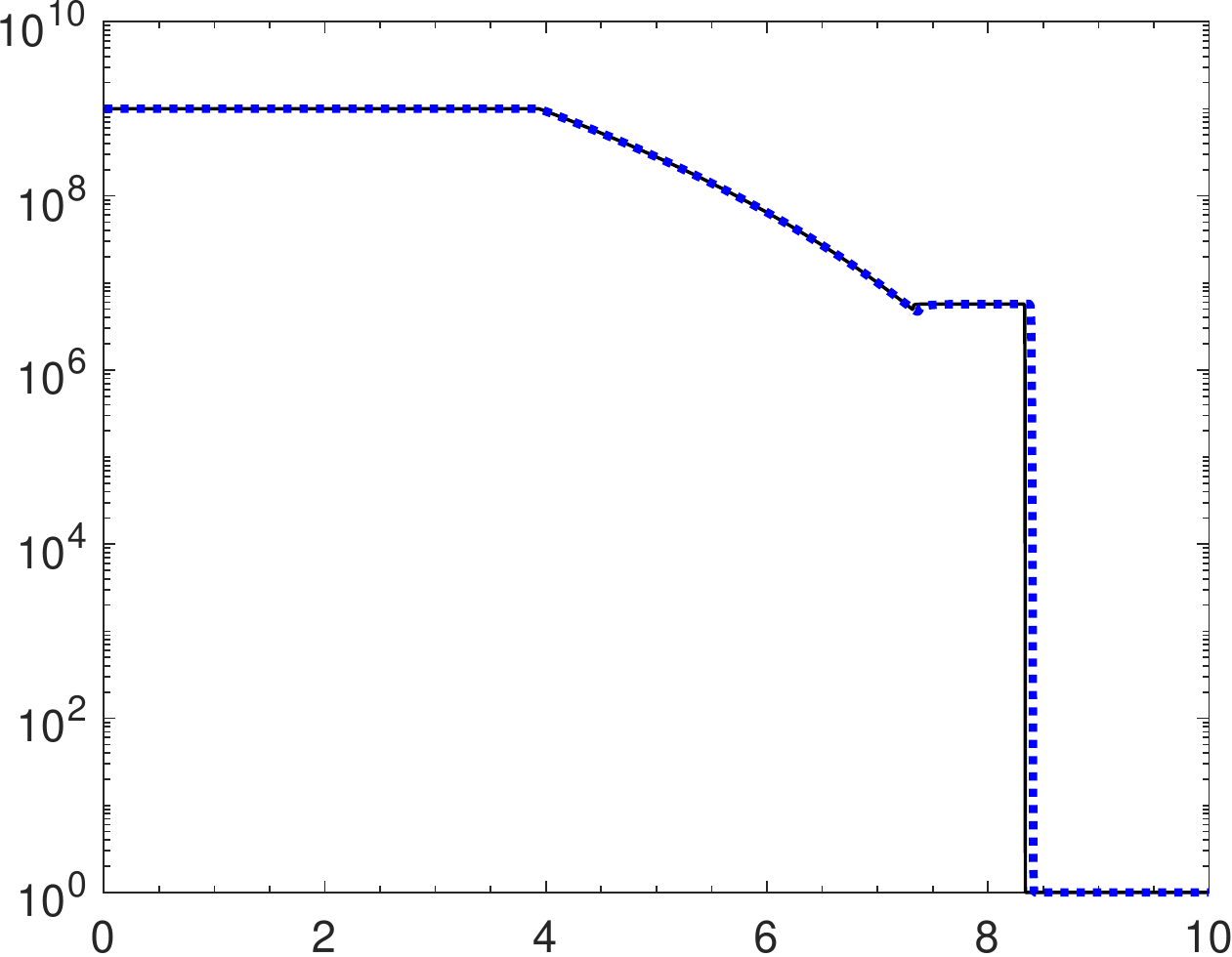}
	\end{center}
	\subcaption{$p$}
\end{subfigure}	
\caption{\small
	Example 3: The log plot of density (left), the velocity (middle) and the log plot of pressure (right) for the extended Leblanc problem
	at $t=0.00004$ obtained by the positivity-preserving well-balanced DG scheme with 1600 cells 
	(dotted lines) and 6400 cells (solid lines), respectively.
}\label{fig:ex5} 
\end{figure}

\subsection{Example 4: Two-dimensional accuracy test} \label{sec:2Dsmooth}  

In this example, we examine the accuracy of the proposed schemes on a two-dimensional smooth problem \cite{XS2013} with a linear gravitational field $\phi_x = \phi_y =1$ in the domain $\Omega = [0,2]^2$. 
The exact solution takes the following form 
\begin{align*}
& \rho(x,y,t) = 1 + 0.2 \sin ( \pi ( x+y - t ( u_0 + v_0 )  ) ),
\quad  {\bf u}(x,y,t) = (u_0,v_0),
\\
& p(x,y,t) = p_0 + t( u_0 + v_0 ) - x- y + 0.2 \cos( \pi ( x+y-t(u_0+v_0) ) ) / \pi,
\end{align*} 
where the parameters are taken as $u_0=v_0=1$ and $p_0=4.5$. 
The adiabatic index $\gamma$ is taken as $5/3$. 
The domain $\Omega$ is divided into $M \times M$ uniform cells, and 
the boundary condition is specified by the exact solution on $\partial \Omega$. 
Table~\ref{tab:acc2D} lists the $l^1$ errors at $t=0.1$ and the corresponding orders obtained by the proposed third-order well-balanced DG scheme at different grid resolutions. 
The results show that the expected convergence order is achieved.  
Our modification of the numerical flux and the non-trivial source term approximation  
do not affect the accuracy of the DG methods.

\begin{table}[htbp]
\centering
\caption{\small Example 4: 
	$l^1$-errors at $t=0.1$ in $\rho,{\bf m}=(m_1,m_2),E$, and corresponding convergence rates for
	the third-order well-balanced DG method at
	different grid resolutions.
} \label{tab:acc2D}
\begin{tabular}{c|c|c|c|c|c|c|c|c}
	\hline
	\multirow{2}{18pt}{Mesh}
	&\multicolumn{2}{c|}{$\rho$}&\multicolumn{2}{c|}{$m_1$}
	&\multicolumn{2}{c|}{$m_2$} &\multicolumn{2}{c}{$E$} 
	\\
	\cline{2-9}
	& error& order & error & order & error & order & error & order \\
	\hline
	$8 \times 8$& 4.20e-3& --          &4.29e-3& --   &  4.29e-3 & -- & 4.67e-3  &-- \\
	$16\times 16$& 5.25e-4& 3.00   & 5.42e-4&    2.98 & 5.42e-4 & 2.98 & 5.76e-4 & 3.02\\
	$32\times 32$& 6.62e-5& 2.99   & 6.86e-5&  2.98 & 6.86e-5 & 2.98  & 7.28e-5 & 2.98 \\
	$64\times 64$& 8.31e-6 &  2.99    & 8.61e-6&  2.99 & 8.61e-6 & 2.99 & 9.17e-6& 2.99 \\
	$128\times 128$& 1.04e-6 &  3.00 & 1.08e-6&    3.00 & 1.08e-6 & 3.00 & 1.15e-6 & 3.00 \\
	$256\times 256$&  1.30e-7 &  3.00  &1.35e-7&  3.00 & 1.35e-7 & 3.00 & 1.44e-7 & 3.00 \\
	$512\times 512$&  1.63e-8 &  3.00  &1.69e-8&  3.00 & 1.69e-8 & 3.00 & 1.80e-8 & 3.00 \\		
	\hline
\end{tabular}
\end{table}

\subsection{Example 5: Two-dimensional isothermal equilibrium}\label{sec:ex6}

This example is used to demonstrate the well-balanced property and the capability of the proposed methods in capturing the small perturbation of a 2D isothermal equilibrium solution \cite{XS2013}. We consider a linear gravitational field with $\phi_x=\phi_y = g$ and take $g=1$. The computational domain is taken as the unit square  $[0,1]^2$. 
The isothermal
equilibrium state under consideration takes the following form
\begin{equation}\label{eq:Ex7WB}
\begin{aligned}
\rho(x,y) = \rho_0 \exp \left( - \frac{ \rho_0 g }{p_0} (x+y) \right), \quad {\bf u}(x,y)={\bf 0}, \quad 
p(x,y) = p_0 \exp \left( - \frac{ \rho_0 g }{p_0} (x+y) \right),
\end{aligned} 
\end{equation}
with the parameters $\rho_0 = 1.21$ and $p_0=1$. 
%

We first validate the well-balanced property of the proposed DG method. To this end, we take the initial data as the equilibrium solution \eqref{eq:Ex7WB} and conduct the simulation up to $t=1$ on the three different uniform meshes. The $l^1$ errors in $\rho$, 
${\bf m}=(m_1,m_2)$ and $E$ are shown in Table \ref{tab:Ex7}. One can clearly see that the steady state solution is indeed maintained up to rounding error, which confirms the well-balancedness of the proposed DG method.  

We then investigate the capability of the proposed well-balanced method in capturing small perturbations of the hydrostatic equilibrium. Initially, a small Gaussian hump perturbation centered at $(0.3,0.3)$ is imposed in the pressure to the equilibrium  solution \eqref{eq:Ex7WB} as follows: 
$$
p(x,y,0) = p_0 \exp \left( - \frac{ \rho_0 g }{p_0} (x+y) \right) + \eta \exp \left( - \frac{100 \rho_0 g}{p_0} 
\big( (x-0.3)^2 + (y-0.3)^2  \big) \right),
$$
where $\eta$ is set as $0.001$. 
We evolve the solution up to $t=0.15$ on a mesh of $100 \times 100$ uniform cells with transmissive boundary conditions. 
The contour plots of the pressure perturbation and density perturbation are displayed in Figure~\ref{fig:ex7b}, obtained via the well-balanced and the non-WB DG schemes, respectively. It is observed that the non-WB DG method cannot capture such small perturbation well on the relatively coarse mesh, while the well-balanced one can resolve it accurately. 
\begin{table}[htbp]
\centering
\caption{\small 
	Example 5: $l^1$-errors for the steady state solution in Section~\ref{sec:ex6} at
	different grid resolutions.
}\label{tab:Ex7}
\begin{tabular}{ccccc}
	\hline
	Mesh & errors in $\rho$ & errors in $m_1$ & errors in $m_2$  & errors in $E$ \\
	\hline
	$50\times 50$ & 2.0615e-15 & 1.8301e-15 & 1.8527e-15 & 7.3921e-15
	\\
	\hline
	$100\times 100$ & 4.5131e-15 & 3.7141e-15 & 3.7649e-15 & 1.5167e-14
	\\
	\hline 
	$200\times 200$ & 9.6832e-15 & 7.3142e-15 & 7.3067e-15 & 3.0940e-14
	\\
	\hline
\end{tabular}
\end{table}

\begin{figure}[htbp]
\centering
\begin{subfigure}[b]{0.48\textwidth}
	\begin{center}
		\includegraphics[width=0.95\textwidth]{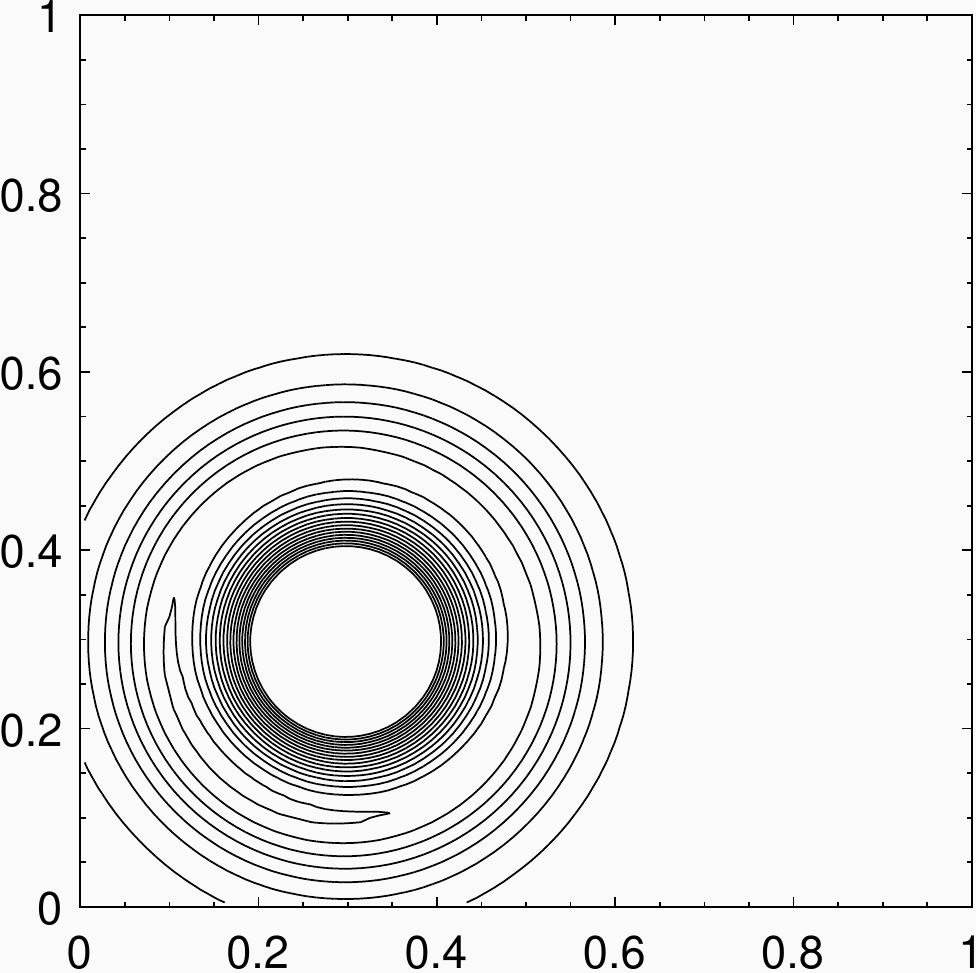}
	\end{center}
	\subcaption{WB method: pressure perturbation}
\end{subfigure}
\begin{subfigure}[b]{0.48\textwidth}
	\begin{center}
		\includegraphics[width=0.95\textwidth]{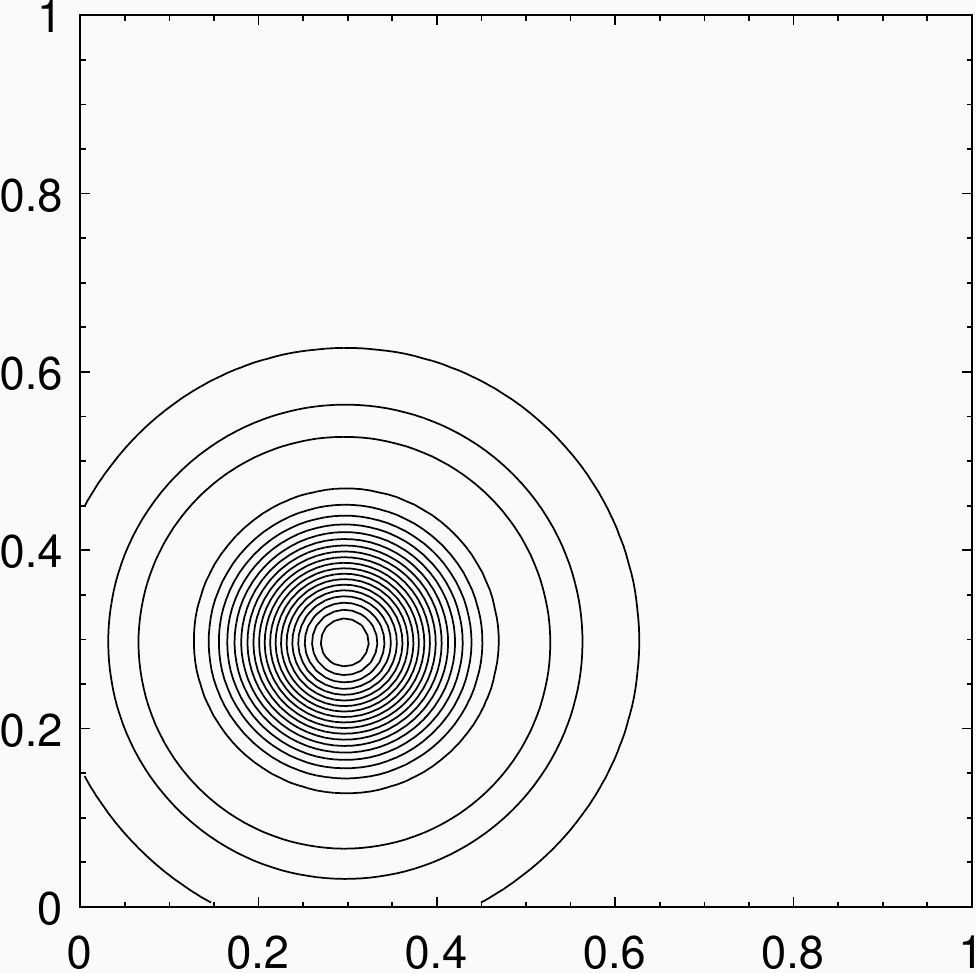}
	\end{center}
	\subcaption{WB method: density perturbation}
\end{subfigure}	
\\ \vspace{3mm}		
\begin{subfigure}[b]{0.48\textwidth}
	\begin{center}
		\includegraphics[width=0.95\textwidth]{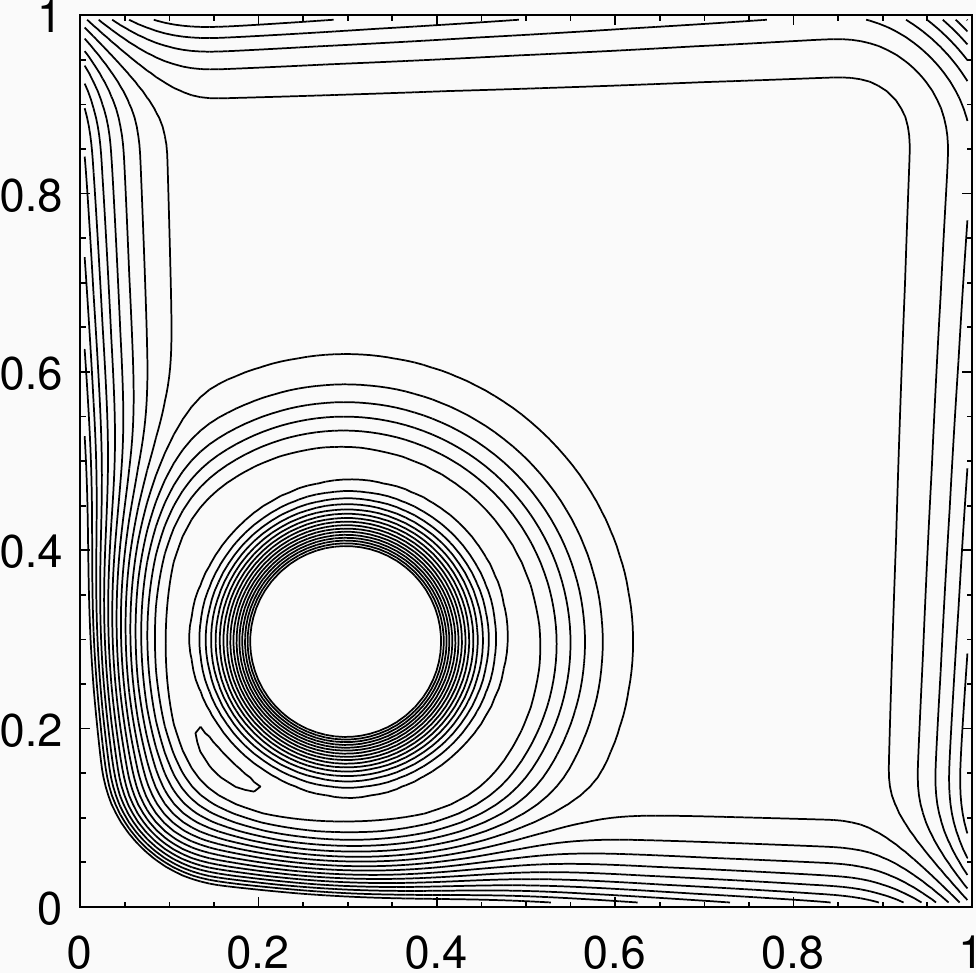}
	\end{center}
	\subcaption{Non-WB method: pressure perturbation}
\end{subfigure}
\begin{subfigure}[b]{0.48\textwidth}
	\begin{center}
		\includegraphics[width=0.95\textwidth]{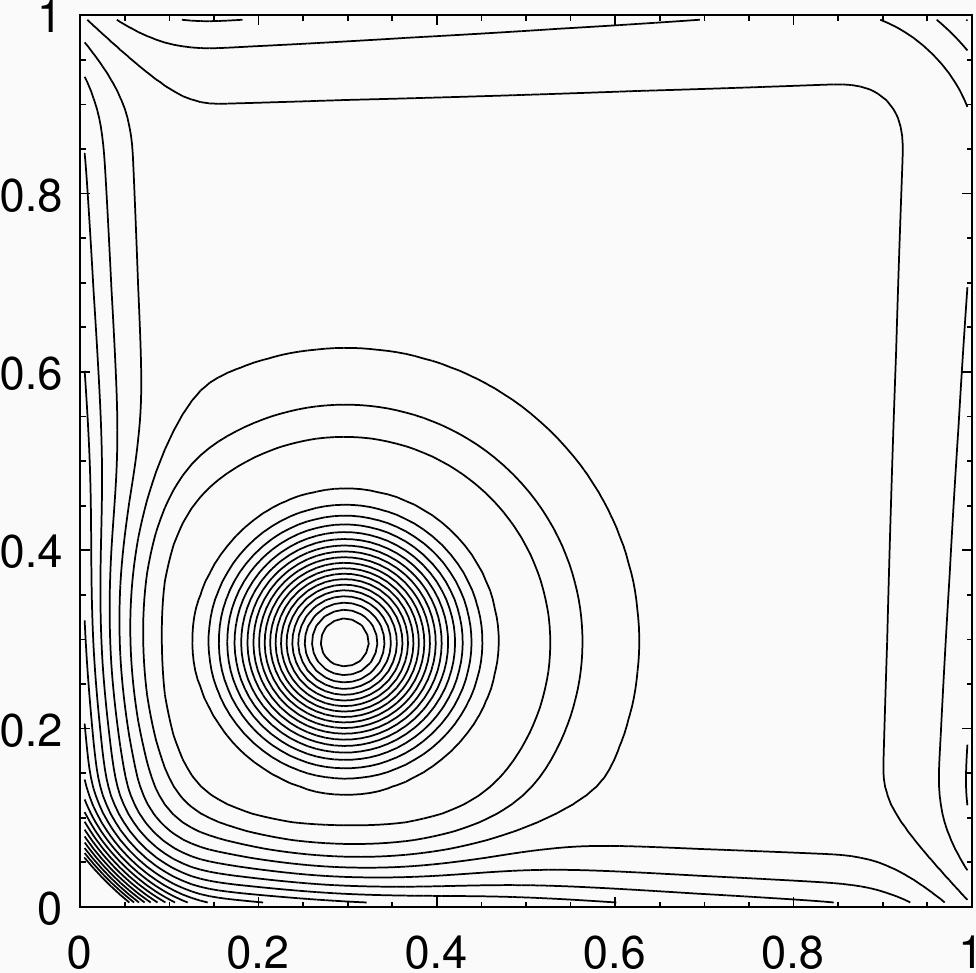}
	\end{center}
	\subcaption{Non-WB method: density perturbation}
\end{subfigure}	
\caption{\small
	Example 5: The contour plots of the pressure perturbation and the density perturbation of the hydrostatic solution at time $t=0.15$ obtained by the third-order WB and non-WB DG scheme with $100 \times 100$ cells. 20 equally spaced contour lines are displayed: from $-0.0003$ to $0.0003$ for pressure perturbation; from $-0.001$ to $0.0002$ 
	for density perturbation. 
}\label{fig:ex7b} 
\end{figure}


\subsection{Example 6: Two-dimensional polytropic equilibrium} \label{sec:ex81}

In this example, we verify the performance of the proposed methods on a two-dimensional polytropic test case \cite{KM2014} arising from astrophysics. We consider a static adiabatic gaseous sphere, which is held together by self-gravitation and can be constructed from the hydrostatic equilibrium 
$
\frac{ {\rm d} p} { {\rm d} r } = -\rho \frac{ {\rm d} \phi } { {\rm d} r }, 
$
with $\gamma =2$. One equilibrium solution of this model is given by 
\begin{equation}\label{eq:Ex8SS}
\rho ( r ) = \rho_c \frac{ \sin ( \alpha r ) } { \alpha r }, \quad u(r) = 0, \quad v(r) = 0, \quad p(r)= K_0 \rho(r) ^2, 
\end{equation}
under the gravitational field
\begin{equation}\label{eq:Ex7phi}
\phi ( r ) = - 2K_0 \rho_c \frac{ \sin( \alpha r ) } { \alpha r },
\end{equation}
where $\alpha = \sqrt{ {2 \pi g} /{ K_0} }$ with $K_0=g=\rho_c = 1$, and $r := \sqrt{ x^2 + y^2 }$ denotes the radial variable. 
The computational domain is taken as $[-0.5,0.5]^2$. 

%
We first demonstrate the well-balanced property of our DG scheme. The initial condition is taken as the equilibrium solution \eqref{eq:Ex8SS}, which should be exactly preserved. 
The computation is performed until $t=14.8$ on three different uniform meshes. 
The $l^1$ errors in the numerical solutions are presented in Table \ref{tab:Ex81}. 
It shows that the steady state is preserved up to rounding error, as expected from the well-balancedness of the proposed method.

\begin{table}[htbp]
\centering
\caption{\small 
	Example 6: $l^1$-errors for the steady state solution in Section~\ref{sec:ex81} 
	at
	different grid resolutions.
}\label{tab:Ex81}
\begin{tabular}{ccccc}
	\hline
	Mesh & errors in $\rho$ & errors in $m_1$ & errors in $m_2$  & errors in $E$ \\
	\hline
	$50\times 50$ & 3.9099e-14 & 1.0132e-13 & 1.0312e-13 & 8.5883e-15
	\\
	\hline
	$100\times 100$ & 7.4068e-14 & 1.8519e-13 & 1.8328e-13 & 1.7675e-14
	\\
	\hline 
	$200\times 200$ & 1.4237e-13 & 3.3540e-13 & 3.3567e-13 & 3.5853e-14
	\\
	\hline
\end{tabular}
\end{table}

%

We now impose a small perturbation to the initial pressure state 
$$
p(x,y,0) = K_0 \rho(r)^2 + \eta \exp( - 100 r^2 ), 
$$
and then compute the solution 
up to $t=0.2$ on a mesh of $200 \times 200$ uniform cells with transmissive boundary conditions. 
Figure~\ref{fig:ex8WB} shows 
the contour plots of the pressure perturbation and the velocity magnitude $\|{\bf u}\|$, obtained by using our well-balanced DG method and the non-WB DG method, respectively. 
We observe that the well-balanced DG scheme captures the small perturbation very well and preserve the axial symmetry, while the non-WB DG method cannot accurately resolve the small perturbation and maintain the axial symmetry on the relatively coarse mesh.

\begin{figure}[htbp]
\centering
\begin{subfigure}[b]{0.48\textwidth}
	\begin{center}
		\includegraphics[width=1\textwidth]{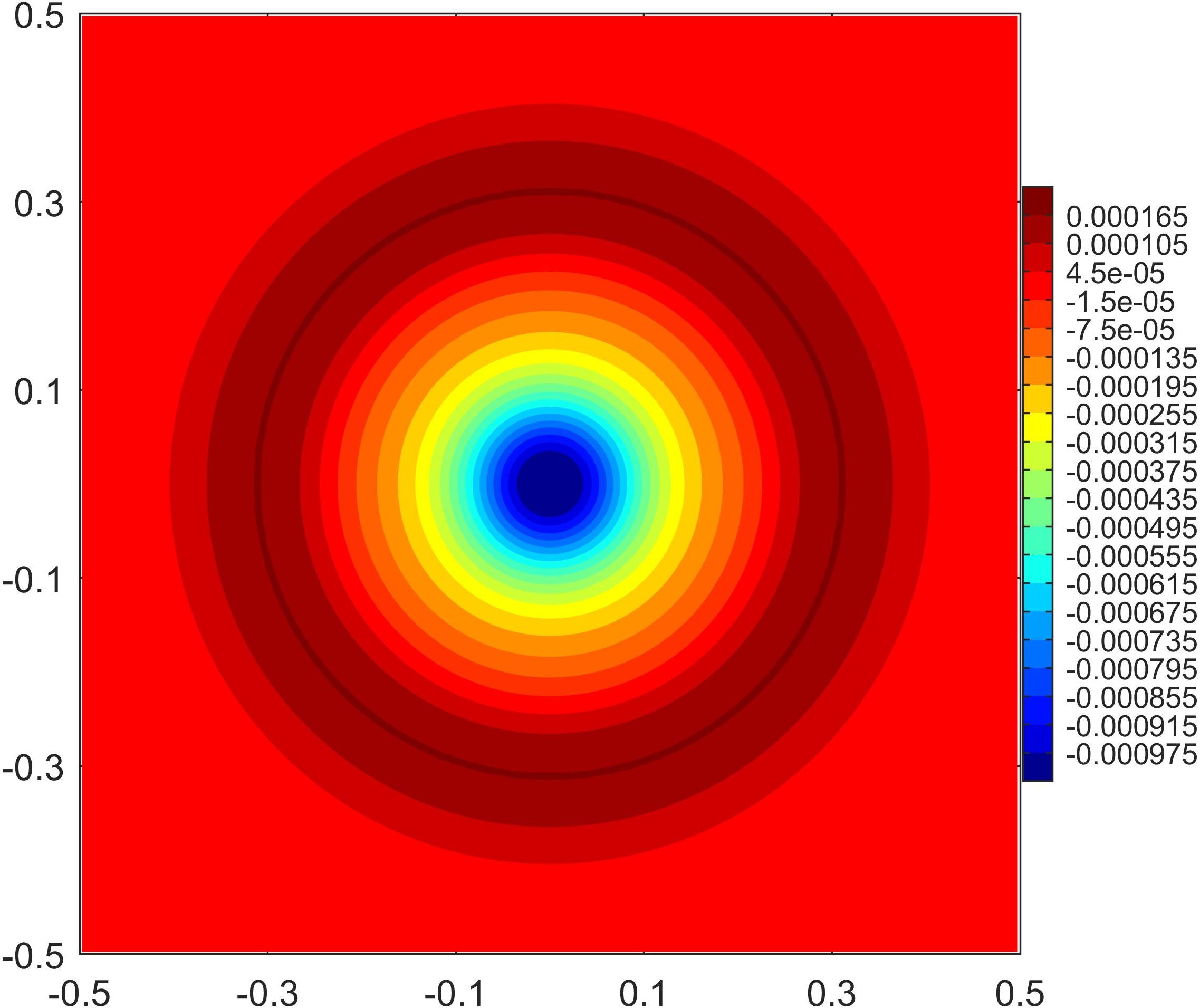}
	\end{center}
	\subcaption{WB method: pressure perturbation}
\end{subfigure}
\begin{subfigure}[b]{0.48\textwidth}
	\begin{center}
		\includegraphics[width=1\textwidth]{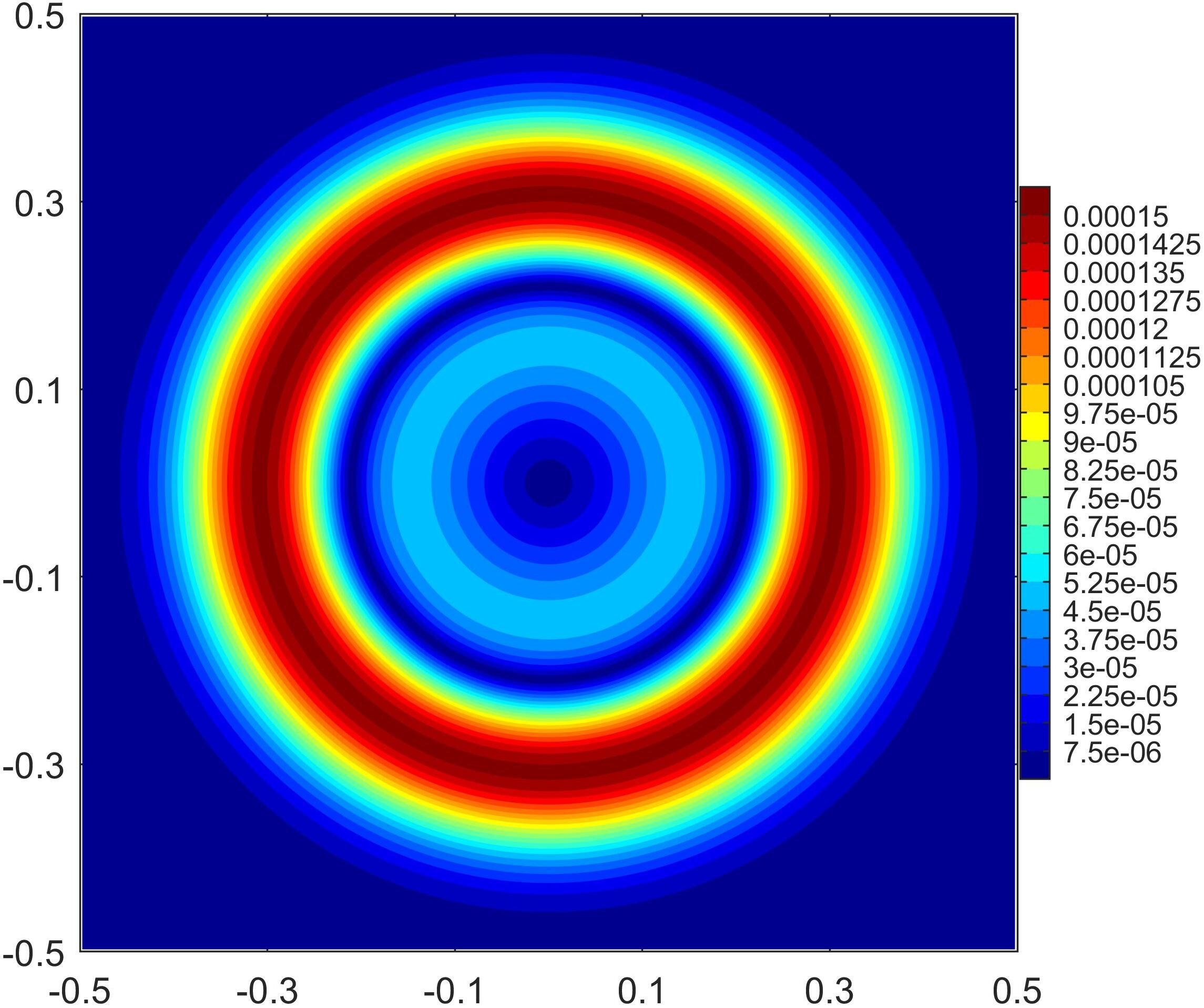}
	\end{center}
	\subcaption{WB method: velocity magnitude}
\end{subfigure}	
\\ \vspace{3mm}	
\begin{subfigure}[b]{0.48\textwidth}
	\begin{center}
		\includegraphics[width=1\textwidth]{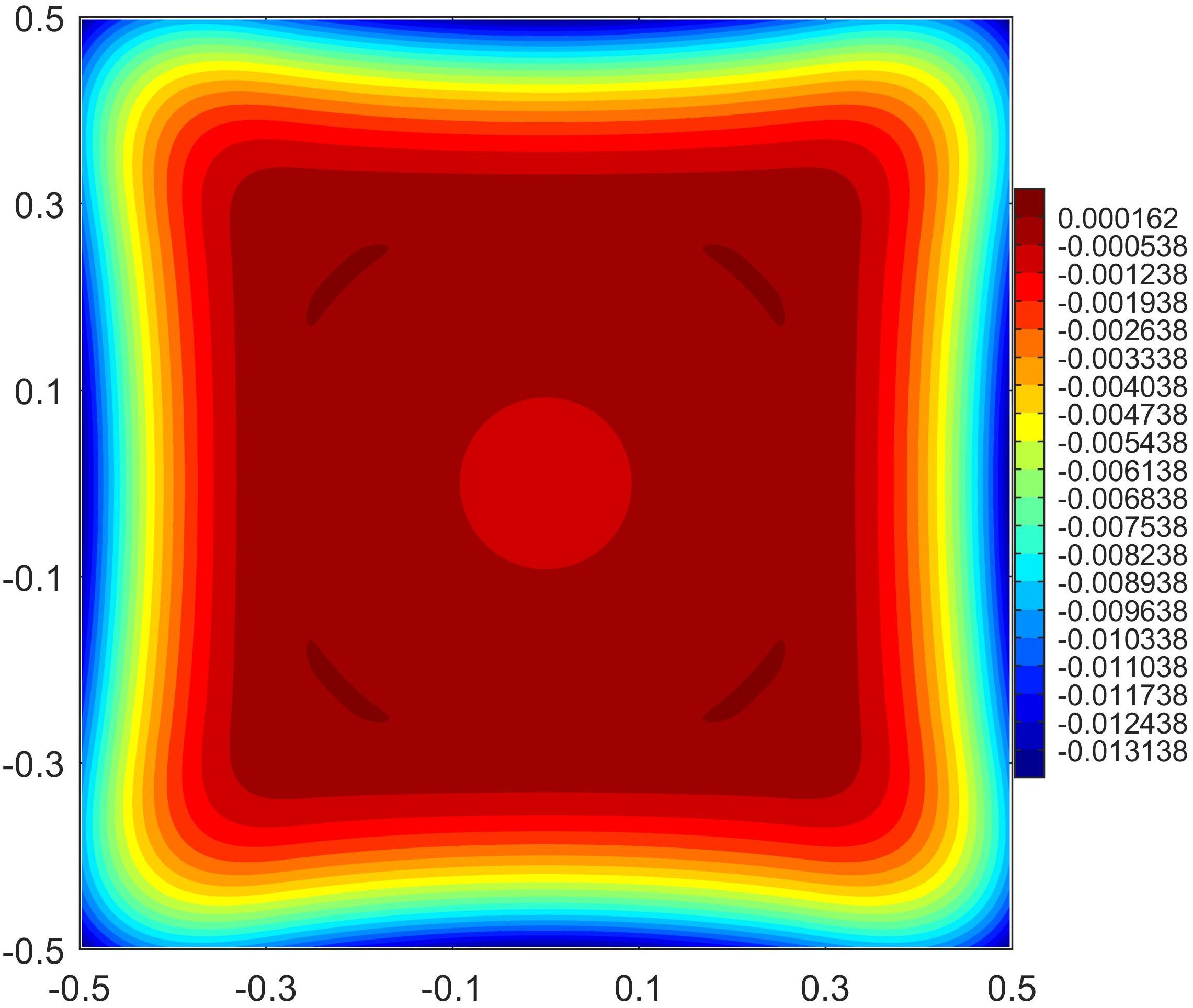}
	\end{center}
	\subcaption{Non-WB method: pressure perturbation}
\end{subfigure}
\begin{subfigure}[b]{0.48\textwidth}
	\begin{center}
		\includegraphics[width=1\textwidth]{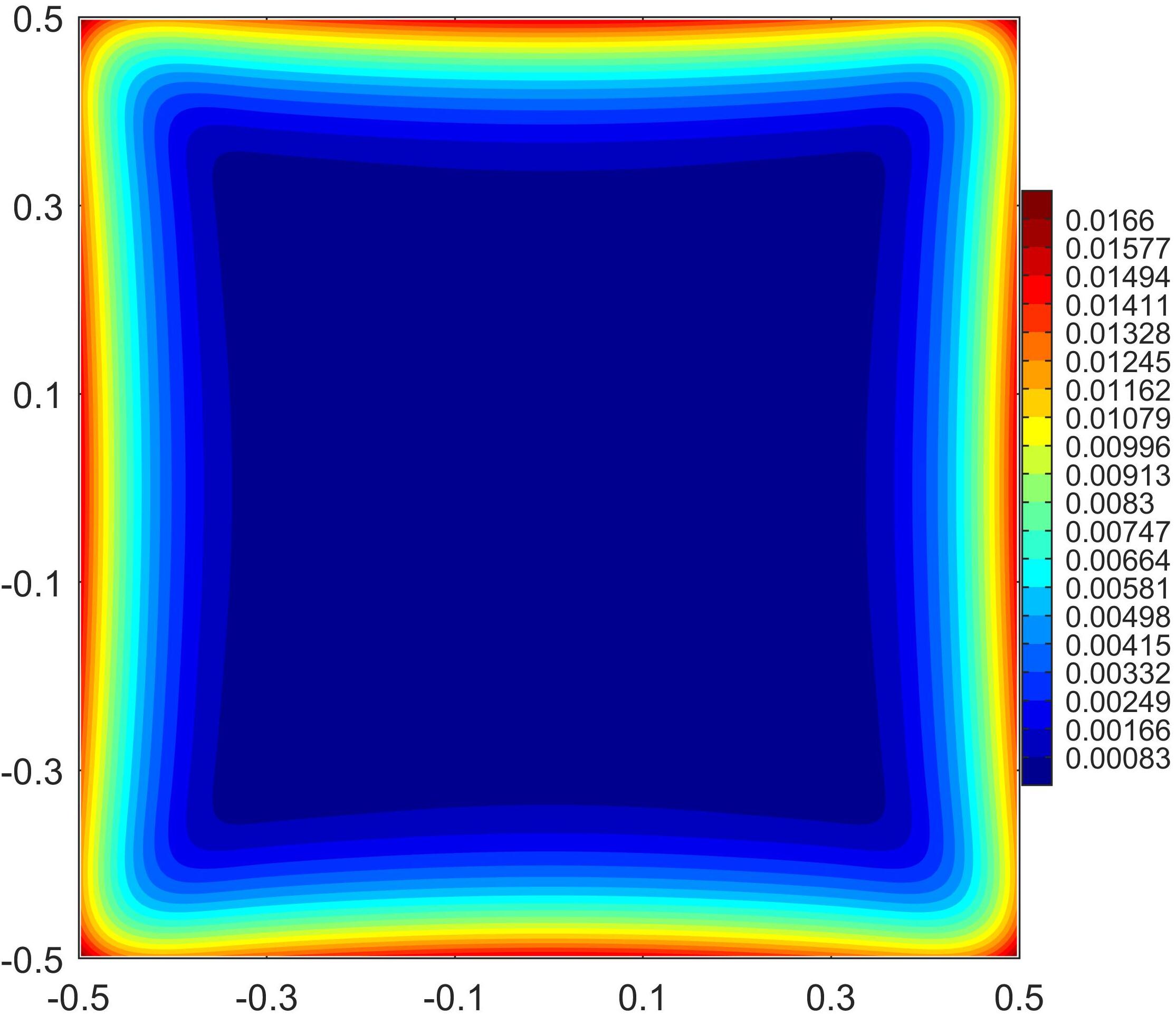}
	\end{center}
	\subcaption{Non-WB method: velocity magnitude}
\end{subfigure}	
\caption{\small
	Example 6: The contour plots of the pressure perturbation and the velocity magnitude at time $t=0.2$ obtained by using our well-balanced and non-WB DG schemes on $200 \times 200$ cells. 
}\label{fig:ex8WB} 
\end{figure}


\subsection{Example 7: Two-dimensional rarefaction test with low density and pressure}

This example is used to demonstrate the positivity-preserving property of the proposed DG method. 
The setup of this test is analogous to the one-dimensional 1-2-3 rarefaction test in \cite{EINFELDT1991273} and the two-dimensional 
rarefaction test in \cite{Thomann2019}.  
The initial condition is given by 
\begin{align*}
& \rho (x,y,0) = \exp( -\phi(x,y)/0.4  ), \qquad \quad p (x,y,0) = 0.4 \exp( -\phi(x,y)/0.4  ), 
\\
&u(x,y,0)= \begin{cases}
-2, & \quad x<0.5,\\
2, & \quad x>0.5,
\end{cases} \qquad \quad v(x,y,0)=0,
\end{align*}
with a quadratic gravitational potential $\phi(x,y) = \frac12 \left( (x-0.5)^2 + (y-0.5)^2 \right)$. 
The computational domain $\Omega = [0,1]^2$ is divided into $100\times 100$ uniform cells with transmissive boundary conditions. 
Figure~\ref{fig:ex9} presents the numerical solutions obtained by using the proposed positivity-preserving well-balanced DG scheme. 
The CFL number is set as $0.15$ in the computation. 
We observe that the density, pressure and energy come close to zero but remain positive throughout the simulation. 
It is noticed that the DG code would break down in the first time step, if the positivity-preserving limiting
technique is not employed. 

\begin{figure}[htbp]
	\centering
	\begin{subfigure}[b]{0.3\textwidth}
	\begin{center}
		\includegraphics[width=0.99\textwidth]{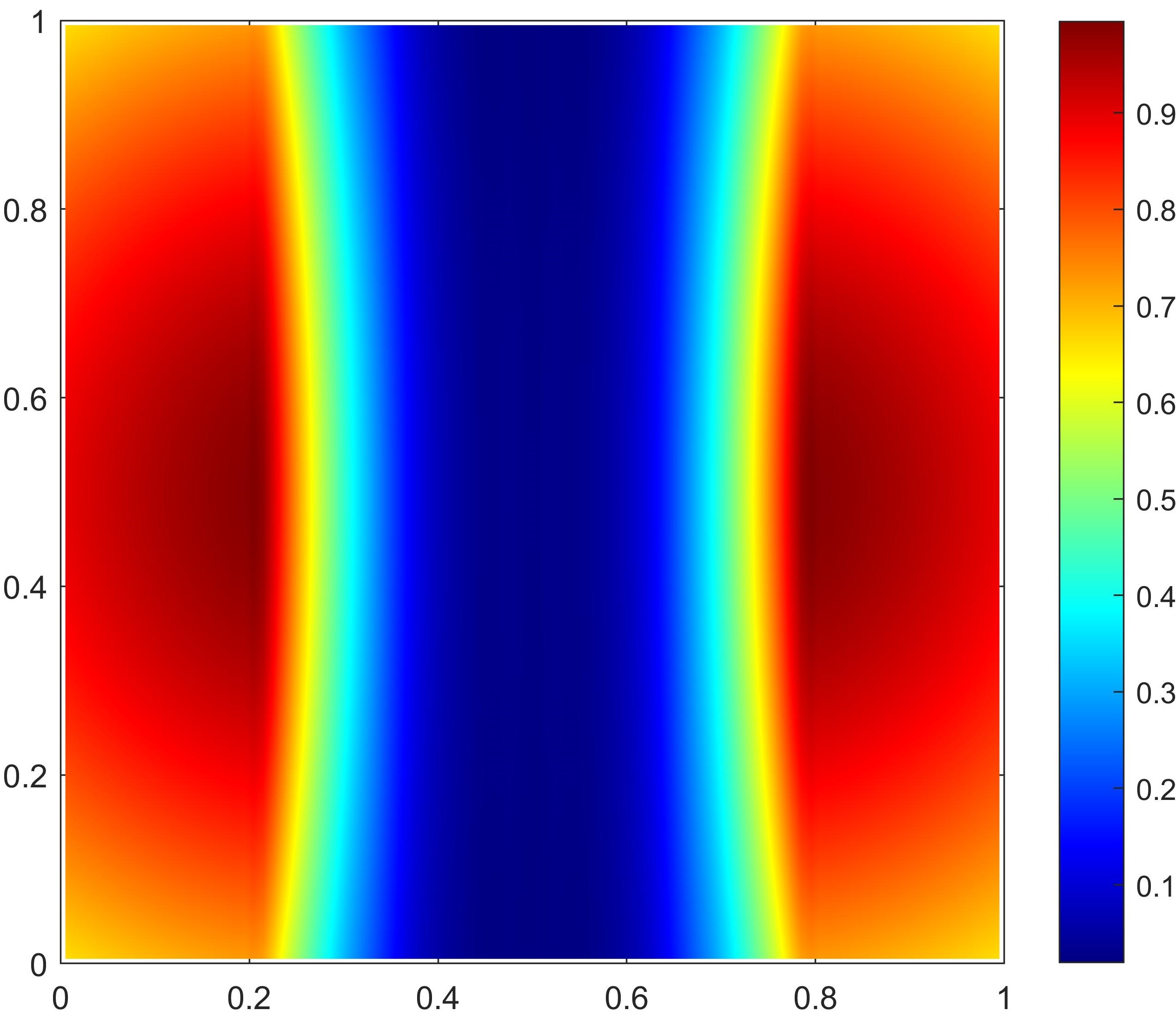}
	\end{center}
	\end{subfigure}
	\begin{subfigure}[b]{0.3\textwidth}
	\begin{center}
		\includegraphics[width=0.99\textwidth]{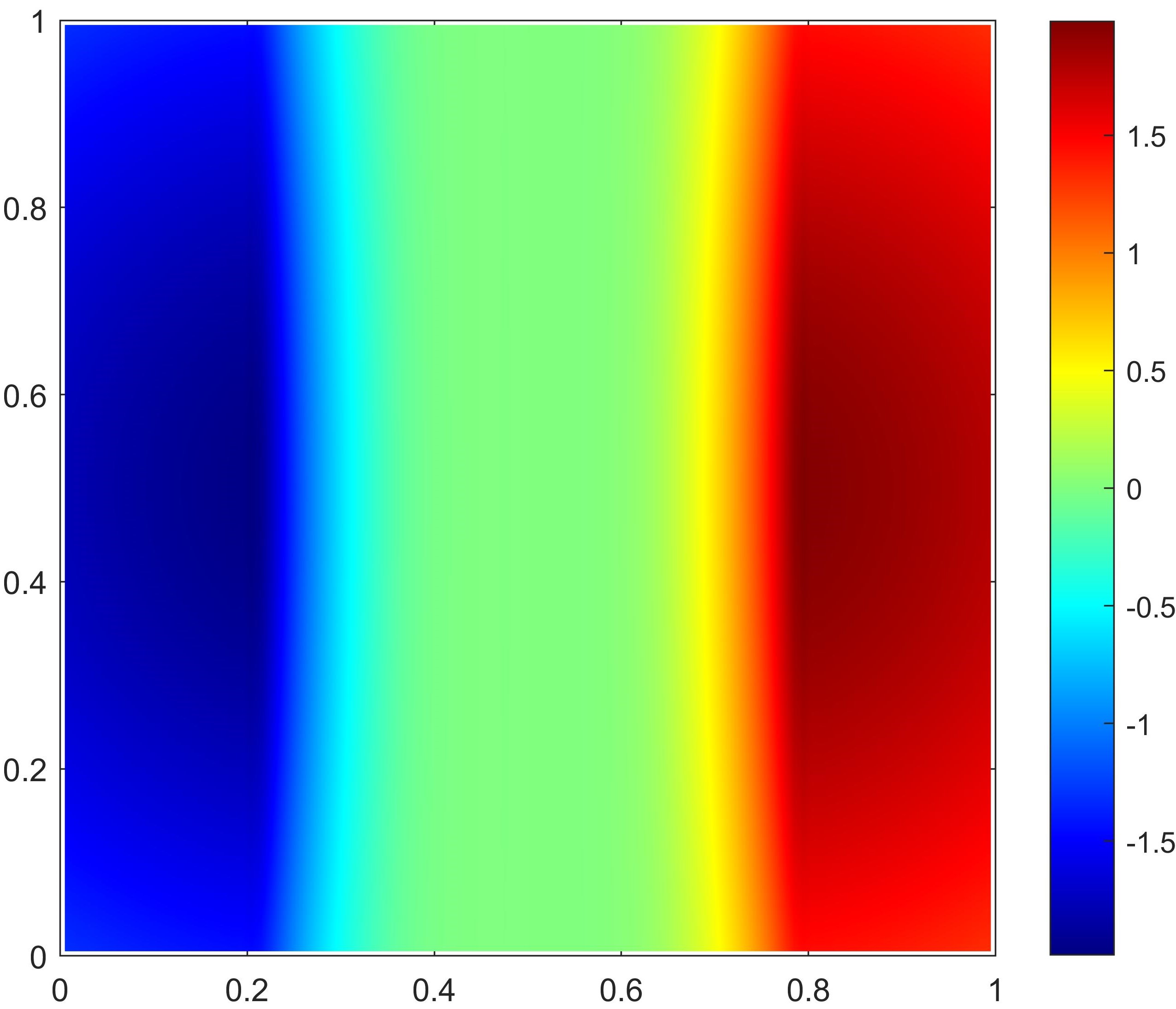}
	\end{center}
	\end{subfigure}	
	\begin{subfigure}[b]{0.3\textwidth}
	\begin{center}
		\includegraphics[width=0.99\textwidth]{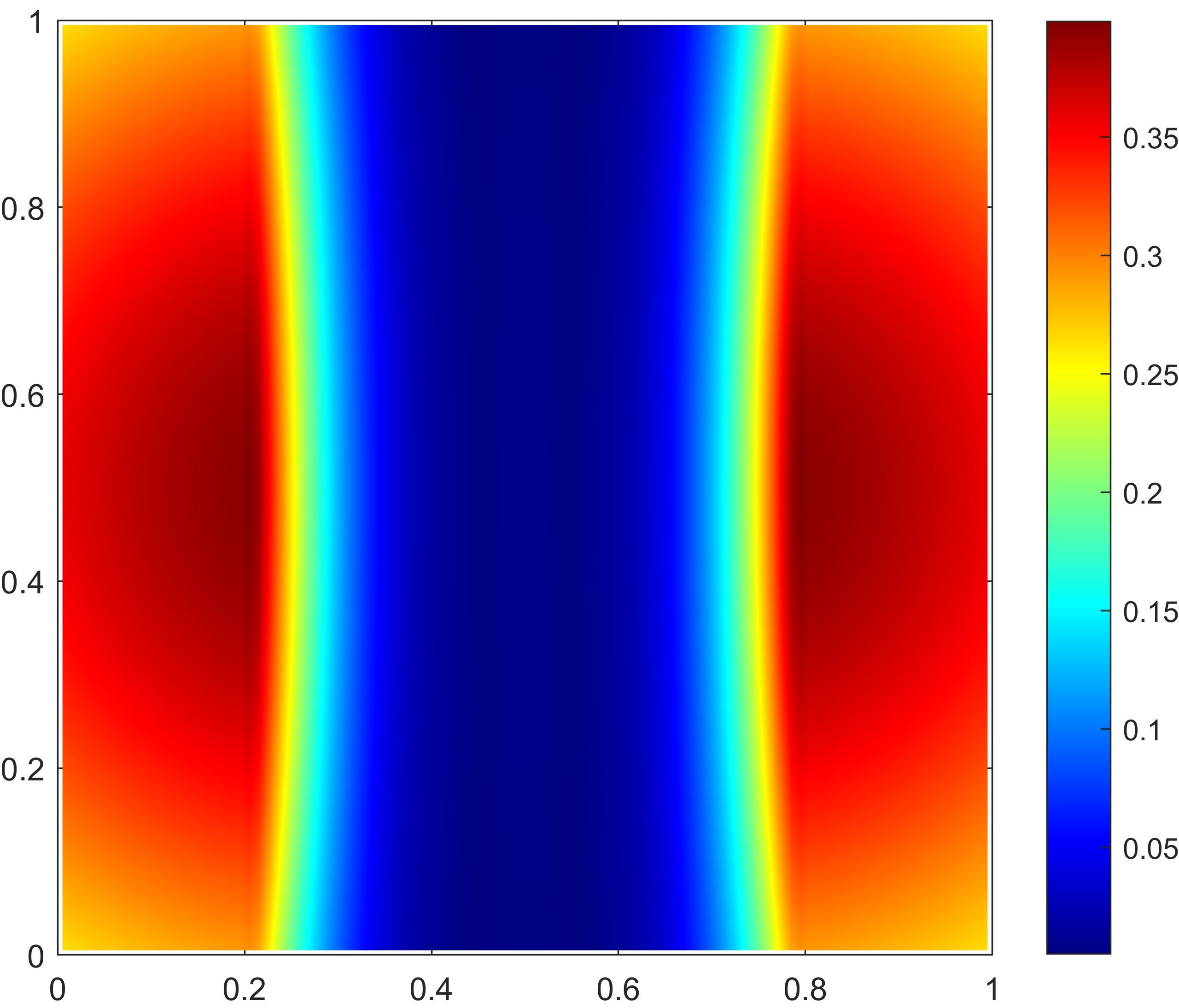}	
	\end{center}
	\end{subfigure}	
	\caption{\small
		Example 7: The schlieren images of $\rho$ (left),  $m_1$ (middle) and $p$ (right) at $t=0.1$  obtained by  
		using the positivity-preserving well-balanced DG scheme with $100 \times 100$ cells. 
	}\label{fig:ex9} 
\end{figure}


\subsection{Example 8: Two-dimensional blast problem}

To further verify the positivity-preserving property and the capability of the proposed DG method in resolving strong discontinuities, we consider a two-dimensional blast problem under the gravitational field \eqref{eq:Ex7phi}. 
The initial data is obtained by adding a huge jump to the pressure term of the equilibrium \eqref{eq:Ex8SS}, and the initial pressure is 
$$
p(x,y,0) = K_0 \rho(r)^2 + \begin{cases}
100, & \quad r<0.1,
\\
0, & \quad r\ge 0.1.
\end{cases}
$$
We set the parameters $K_0=g=1$ and $\gamma=2$ as those in Example 6, and $\rho_c = 0.01$ so that low pressure and low density appear in the solution. This, along with the presence the strong discontinuities, make this test challenging.   

Figure~\ref{fig:ex10WB} displays the contour plots of $\rho$ and $\log(p)$ at
$t = 0.005$ computed by the positivity-preserving third-order well-balanced DG method with $400 \times 400$ uniform cells. 
We also show the plot of $p$ along the line $y=x$, from which 
we can clearly observe a strong shock at $\sqrt{x^2+y^2} \approx 0.28$. 
In this test, the CFL number of $0.15$ is used, and the WENO limiter is implemented.
We observe that the discontinuities are well captured with high resolution, and the proposed DG method preserves 
the positivity of density and pressure as well as the axisymmetric structure of the solution.  
In this extreme test, it is necessary to use the positivity-preserving limiting technique, otherwise we observe that the DG
code would start to produce negative numerical pressure at $t \approx 0.00267$.

\begin{figure}[htbp]
\centering
\begin{subfigure}[b]{0.48\textwidth}
	\begin{center}
		\includegraphics[width=1\textwidth]{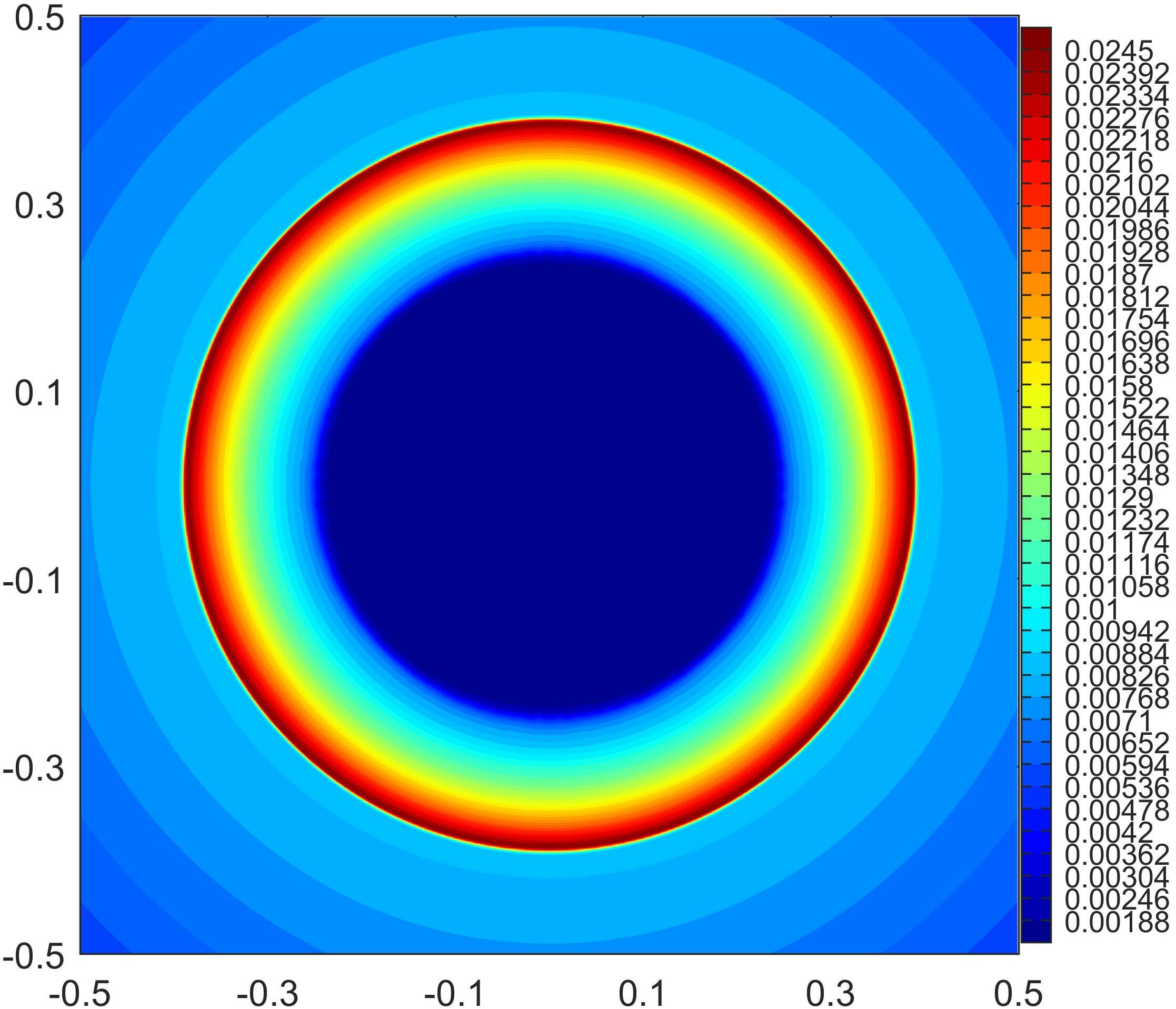}
	\end{center}
\end{subfigure}	
\begin{subfigure}[b]{0.48\textwidth}
	\begin{center}
		\includegraphics[width=1\textwidth]{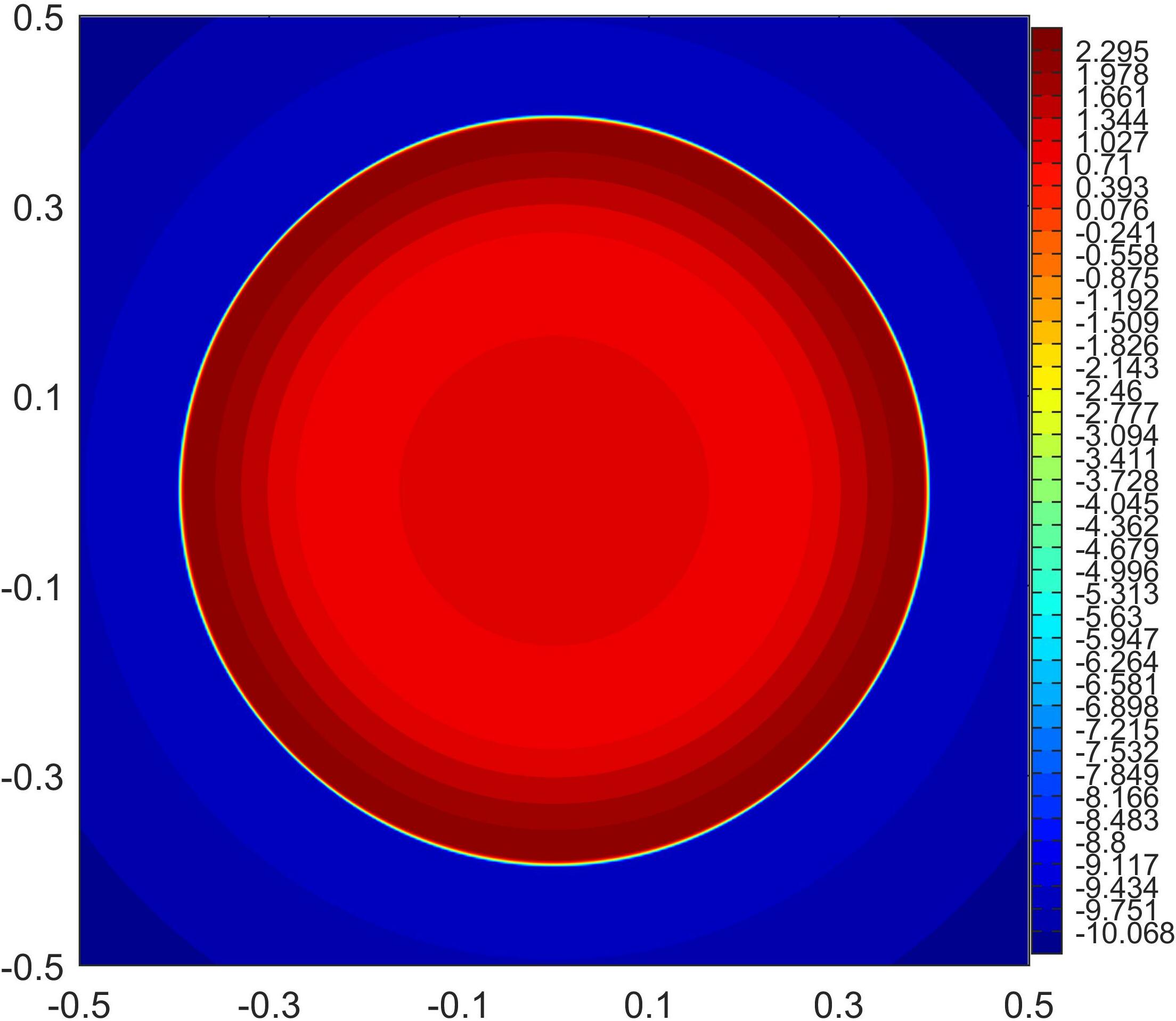}		
	\end{center}
\end{subfigure}	
\\ \vspace{3mm}	
\begin{subfigure}[b]{0.48\textwidth}
	\begin{center}
		\includegraphics[width=0.95\textwidth]{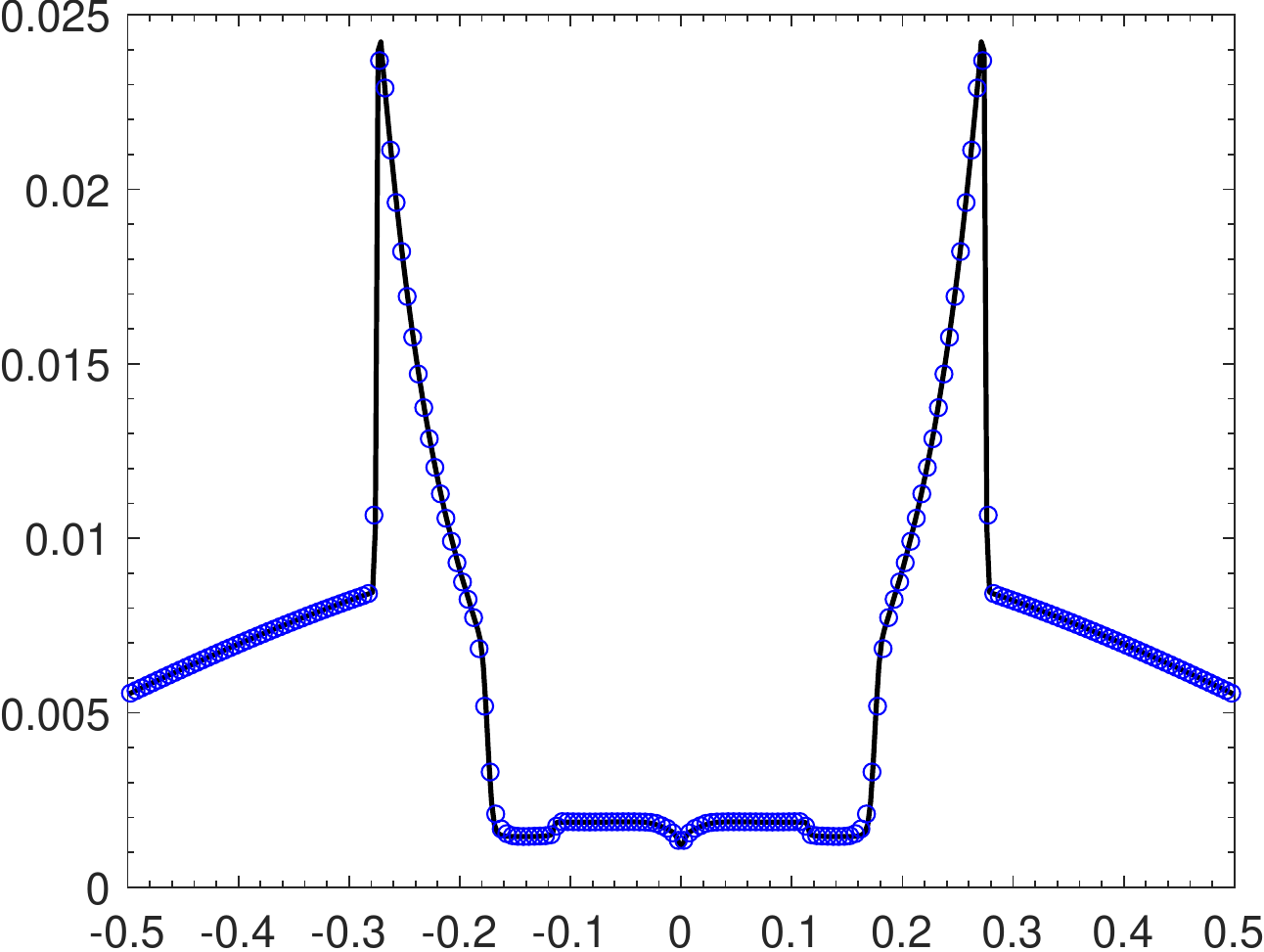}
	\end{center}
\end{subfigure}		
\begin{subfigure}[b]{0.48\textwidth}
	\begin{center}
		\includegraphics[width=0.95\textwidth]{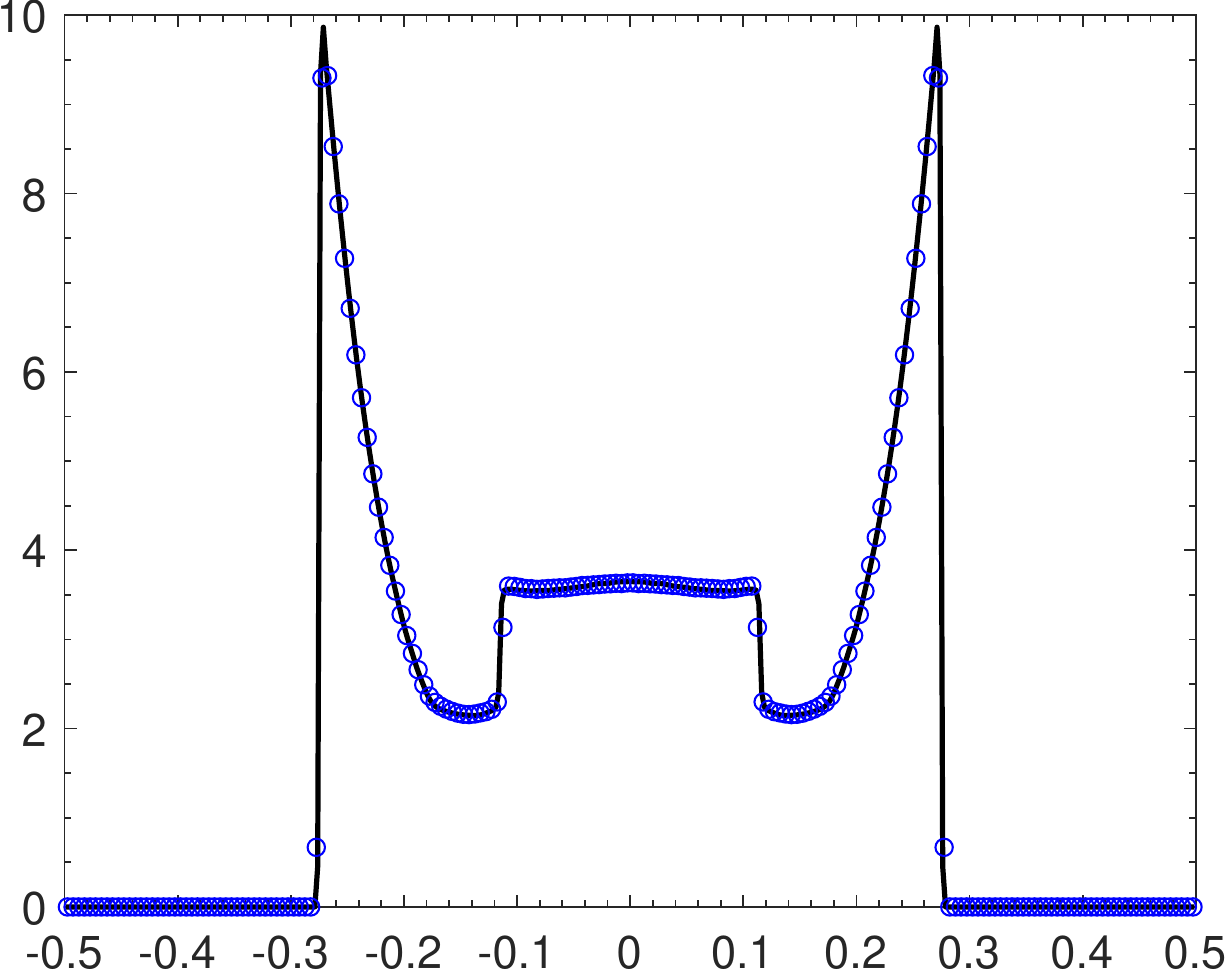}
	\end{center}
\end{subfigure}		
\caption{\small
	Example 8: The contour plots of the density $\rho$ (top-left) and the pressure logarithm ${\log}(p)$ (top-right) at $t=0.005$, and  
	the plots of $\rho$ (bottom-left) and $p$ (bottom-right) along the line $y=x$ within the scaled interval $[-0.5,0.5]$, obtained by  
	the positivity-preserving well-balanced DG scheme with $400 \times 400$ cells. 
}\label{fig:ex10WB} 
\end{figure}


\section{Conclusion} \label{sec:conclusion}
In this paper, we constructed a class of arbitrarily high-order accurate positivity-preserving well-balanced 
DG methods for the compressible Euler equations with gravitation. 
A novel well-balanced spatial discretization was specially designed with suitable source term treatments 
and a properly modified HLLC flux, while the desired positivity property was also achieved in the discretization at the same time. 
Based on some technical decompositions as well as several key properties of the admissible states and HLLC flux, 
rigorous positivity-preserving analyses were carried out in theory. 
It was proven that the resulting well-balanced DG schemes with SSP time discretization satisfy 
a weak positivity property, which implies that 
a simple existing limiter can effectively enforce the positivity-preserving property without losing conservation and high-order accuracy.
The proposed methods and analyses work for the Euler system with general EOS. 
Extensive 1D and 2D numerical tests were provided to demonstrate the 
accuracy, well-balancedness, positivity preservation, and high resolution of the proposed schemes. 
It is worth noting that the proposed numerical framework is also readily applicable for designing  
positivity-preserving well-balanced high-order accurate finite volume methods.

\appendix

\section{Extensions to general equation of state}\label{app:gEOS} 

In this appendix, we show that the proposed numerical methods and analyses, which are presented with the ideal EOS in the paper, are readily extensible to general EOS 
\begin{equation}\label{gEOS}
e={\mathcal E}(\rho,p), 
\end{equation}
which satisfies the following condition 
\begin{equation}\label{eq:assumpEOS}
\mbox{if}\quad \rho \ge 0,\quad \mbox{then}\quad p>0~\Leftrightarrow~e > 0.
\end{equation}
Such a condition is reasonable, holds for the ideal EOS \eqref{eq:IEOS}, and was also 
assumed in \cite{zhang2011} to construct positivity-preserving schemes for equations under general EOS. 

First of all, the functions $\alpha_{\bf n} ({\bf U})$,  $\alpha_{\max} ({\bf U})$
$\alpha_\pm ({\bf U})$, which are defined in the paper for the ideal EOS \eqref{eq:IEOS}, should be redefined 
for general EOS \eqref{gEOS} as follows:
\begin{equation}\label{eq:newAlpha}
\alpha_{\bf n} ({\bf U}):= | {\bf u} \cdot {\bf n} | + \hat c, \qquad 
\alpha_{\max} ({\bf U}) := |u| + \hat c, \qquad 
\alpha_\pm ({\bf U}) =  u \pm \hat c,
\end{equation}
where $\hat c= \max\{ \frac{p}{\rho \sqrt{2 e }}, c_s \}$ with $c_s$ denoting  the 
local sound speed. 
With the new definitions \eqref{eq:newAlpha}, all the related 
Lemmas \ref{lem:LFflux}, \ref{HLLC_std_euler:old00}, \ref{HLLC_std_euler:old}, \ref{HLLC_std_euler00}, \ref{HLLC_std_euler}, 
and \ref{HLLC_std_euler2D00}, remain valid for general EOS \eqref{gEOS}. 

\subsection{Well-balancedness}

The HLLC fluxes for general EOS \eqref{gEOS} also satisfy the contact property, i.e., Lemmas \ref{lem:HLLCcontact} and \ref{lem:HLLCcontact2D} can be extended as follows.   

\begin{lemma}[1D contact property]\label{lem:HLLCcontact:gEOS}
For any two states ${\bf U}_L =  \big( \rho_L, 0, \rho_L {\mathcal E}(\rho_L, p) \big)^\top $ 
and ${\bf U}_R = \big(  \rho_R, 0,  \rho_R {\mathcal E}(\rho_R,p) \big)^\top $, the 1D HLLC flux \eqref{HLLC} satisfies 
$$ 
{\bf F}^{hllc} ({\bf U}_L,{\bf U}_R) = ( 0, p, 0 )^\top.
$$
\end{lemma}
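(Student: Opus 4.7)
The plan is to show that the proof of Lemma \ref{lem:HLLCcontact} carries over verbatim, because the specific form of the EOS never enters the computation of the HLLC flux when the two input states have zero velocity and equal pressure. The key observation is that the EOS only affects the total energy $E_i = \rho_i \mathcal{E}(\rho_i, p)$, which will drop out of all the relevant expressions.

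First I would handle the two ``supersonic'' branches of \eqref{HLLC}. Since $u_L = u_R = 0$, the flux vector ${\bf F}({\bf U}) = (\rho u,\, \rho u^2 + p,\, (E+p)u)^\top$ reduces to $(0, p, 0)^\top$ at both ${\bf U}_L$ and ${\bf U}_R$, independent of the EOS. Hence the cases $0 \le S_L$ and $0 \ge S_R$ immediately yield $(0, p, 0)^\top$.

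Next I would analyze the middle wave speed. Plugging $u_L = u_R = 0$ and $p_L = p_R = p$ into the formula for $S_*$, the numerator $p_R - p_L + \rho_L u_L(S_L - u_L) - \rho_R u_R(S_R - u_R)$ vanishes, so $S_* = 0$ (the denominator is nonzero since $S_L < 0 < S_R$ whenever this branch is active). Substituting $S_* = 0$ and $u_i = 0$ into \eqref{US}, the prefactor $\rho_i(S_i - u_i)/(S_i - S_*)$ equals $\rho_i$, and the third entry simplifies to $E_i/\rho_i + (0 - 0)(0 + p/(\rho_i S_i)) = E_i/\rho_i$. Thus ${\bf U}_{*i} = (\rho_i, 0, E_i)^\top = {\bf U}_i$, regardless of how $E_i$ depends on $\rho_i$ and $p$ through the EOS. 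Consequently ${\bf F}_{*i} = {\bf F}({\bf U}_i) + S_i({\bf U}_{*i} - {\bf U}_i) = {\bf F}({\bf U}_i) = (0, p, 0)^\top$, so the two ``subsonic'' branches also yield the desired value.

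There is no real obstacle here; the only thing to double-check is that the definition of the signal speed estimates $S_L, S_R$ from \eqref{SLSR} still makes sense under \eqref{gEOS}, which it does once $\alpha_\pm$ are redefined as in \eqref{eq:newAlpha} (since $\hat c \ge c_s > 0$ ensures $S_L < 0 < S_R$ on the intermediate branches). The essential point to emphasize is that the HLLC intermediate state formula is purely algebraic in $\{\rho_i, u_i, p_i, E_i, S_i, S_*\}$ and never invokes the relation between $e$ and $(\rho, p)$, so the contact property is an EOS-free consequence of $u_L = u_R = 0$ and $p_L = p_R$.
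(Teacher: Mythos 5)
Your proof is correct and is exactly the direct computation the paper has in mind: the paper omits the argument (it declares the ideal-EOS version, Lemma \ref{lem:HLLCcontact}, ``straightforward'' and states the general-EOS version without proof), and your verification that $S_*=0$, ${\bf U}_{*i}={\bf U}_i$, and hence ${\bf F}_{*i}={\bf F}({\bf U}_i)=(0,p,0)^\top$ is the intended route. Your added observation that the HLLC formulas are purely algebraic in $\{\rho_i,u_i,p_i,E_i,S_i,S_*\}$, so the EOS never enters, is precisely why the lemma extends verbatim from \eqref{eq:IEOS} to \eqref{gEOS}.
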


\begin{lemma}[2D contact property]\label{lem:HLLCcontact2D:general}
For any two states ${\bf U}_L =  \big( \rho_L, 0, 0, \rho_L {\mathcal E}(\rho_L, p) \big)^\top $ 
and ${\bf U}_R =  \big( \rho_R, 0, 0, \rho_R {\mathcal E}(\rho_R,p) \big)^\top $, the 2D HLLC flux satisfies 
$$ 
{\bf F}^{hllc}\left( {\bf U}_L, {\bf U}_R ; {\bf n}
\right) = ( 0, p {\bf n}^\top, 0 )^\top.
$$
\end{lemma}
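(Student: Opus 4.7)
The plan is to verify the identity by direct substitution into the multidimensional HLLC formulas, exploiting the fact that the 2D HLLC solver in direction $\bf n$ is constructed by rotating the 1D HLLC Riemann solver into the local normal frame. Because both input states have zero velocity and equal pressure $p$, the situation reduces essentially to the 1D contact problem of Lemma \ref{lem:HLLCcontact:gEOS}, with the two tangential momentum components playing no role.

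First, I would compute the middle wave speed $S_*$. With $u_{n,L}:={\bf u}_L\cdot{\bf n}=0$, $u_{n,R}:={\bf u}_R\cdot{\bf n}=0$, and $p_L=p_R=p$, the 2D analogue of the $S_*$ formula given in Section 2.3 has a vanishing numerator, so $S_*=0$. Any reasonable Davis/Einfeldt-type estimate for $S_L$ and $S_R$ (the multidimensional analogue of \eqref{SLSR} adapted to general EOS via the redefined $\alpha_\pm$ in \eqref{eq:newAlpha}) then forces $S_L\le 0\le S_R$, so the HLLC flux is evaluated as either ${\bf F}_{*L}$ or ${\bf F}_{*R}$.

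Next, I would substitute $u_{n,i}=0$, $S_*=0$, and the vanishing tangential velocities into the 2D intermediate-state formula analogous to \eqref{US}. The scalar prefactor $\rho_i(S_i-u_{n,i})/(S_i-S_*)$ collapses to $\rho_i$; the momentum entry $\rho_i(S_*{\bf n}+{\bf u}_{i,t})$ vanishes; and the energy entry reduces to $E_i$ because the extra contribution carries a factor $(S_*-u_{n,i})=0$. Hence ${\bf U}_{*i}={\bf U}_i$, which implies ${\bf F}_{*i}={\bf F}({\bf U}_i)\cdot{\bf n}+S_i({\bf U}_{*i}-{\bf U}_i)={\bf F}({\bf U}_i)\cdot{\bf n}$. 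Since ${\bf u}_i={\bf 0}$, the physical flux collapses to $(0,\,p\,{\bf n}^\top,\,0)^\top$, proving the claim.

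The only point worth flagging is that generality of the EOS does not interfere with any step: the EOS enters only through $E_i=\rho_i\,{\mathcal E}(\rho_i,p)$, and the EOS-dependent term $p_i/[\rho_i(S_i-u_{n,i})]$ appearing in the intermediate-state energy component is multiplied by $(S_*-u_{n,i})=0$ and thus drops out. This is precisely the mechanism that makes Lemma \ref{lem:HLLCcontact:gEOS} hold in 1D, and it carries over verbatim to the 2D rotated setting, so the proof reduces to a direct computation with no genuine obstacle. The paper's proof is expected to simply state that the argument is analogous to the 1D case and omit the details.
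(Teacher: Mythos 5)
Your computation is correct and is exactly the "straightforward" verification the paper has in mind: the paper omits the proof entirely (the 1D ideal-EOS version is declared straightforward, the 2D and general-EOS versions are stated as analogous), and your substitution $u_{n,L}=u_{n,R}=0$, $p_L=p_R=p$ giving $S_*=0$, ${\bf U}_{*i}={\bf U}_i$, and hence ${\bf F}^{hllc}={\bf F}({\bf U}_i)\cdot{\bf n}=(0,p\,{\bf n}^\top,0)^\top$ is precisely that argument, with the EOS entering only through the energy term that is annihilated by the factor $S_*-u_{n,i}=0$. No gaps.
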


Let 
${\bf U}^e({\bf x}) = \Big( \rho^e({\bf x}), {\bf 0}, \rho^e({\bf x}) {\mathcal E} \big( \rho^e({\bf x}),p^e({\bf x}) \big)  \Big)^\top$ 
denote the target stationary hydrostatic solution to be preserved. 
Define  
$${\bf U}_h^e({\bf x}) = {\bf P}_h {\bf U}^e({\bf x}) = \Big( \rho^e_h({\bf x}), {\bf 0}, \rho^e_h({\bf x}) e^e_h({\bf x})  \Big)^\top, \qquad \mbox{with} \quad e_h^e({\bf x}) = {\mathcal E} \big( \rho^e_h({\bf x}),p^e_h({\bf x}) \big) $$
as the standard projection of 
${\bf U}^e({\bf x})$ onto the DG space $[\mathbb V_h^k]^{d+2}$. Note that $p^e_h({\bf x})$, as defined above, may not belong to $\mathbb V_h^k$ for a general EOS.

In order to achieve the well-balancedness for general EOS \eqref{gEOS} by the contact property in 
Lemma \ref{lem:HLLCcontact:gEOS}, 
the 1D modified HLLC flux \eqref{eq:1Dflux} should be accordingly generalized to 
\begin{equation}\label{eq:1Dflux:gEOS}
\widehat{\bf F}_{j+\frac12} = 
{\bf F}^{hllc} \left( \frac{ e^{\star,-}_{j+\frac12}  }{ e_h^e( x_{j+\frac12}^- ) }   {\bf U}_{j+\frac12}^-, \frac{ e^{\star,+}_{j+\frac12}  }{ e_h^e( x_{j+\frac12}^+ ) }   {\bf U}_{j+\frac12}^+ \right),
\end{equation}
where  
$$e^{\star,\pm}_{j+\frac12} := {\mathcal E} \left( \rho^e_h( x_{j+\frac12}^\pm ), p^{e,\star}_{j+\frac12}   \right), \qquad p^{e,\star}_{j+\frac12} =  \frac12 \left( p_h^e( x_{j+\frac12}^- ) 
+ p_h^e( x_{j+\frac12}^+ ) \right).$$ 
Similarly, the 2D modified HLLC flux \eqref{2DWBflux} should be generalized to
\begin{equation}\label{2DWBflux:gEOS}
\widehat{\bf F}_{{\bf n}_{{\mathscr E},K}} = {\bf F}^{hllc} \left(
\frac{  e^{\star,{\tt int}(K)}_h }{ e^{e,{\tt int}(K)}_h }
{\bf U}_h^{{\tt int}(K)},~\frac{ e^{\star,{\tt ext}(K)}_h }{ e^{e,{\tt ext}(K)}_h } {\bf U}_h^{{\tt ext}(K)};~ {\bf n}_{{\mathscr E},K}
\right),
\end{equation}
where  
$$e^{\star,{\tt int}(K)}_h := {\mathcal E} \left( \rho^{e,{\tt int}(K)}_h, p^{e,\star}_h \right), \quad
e^{\star,{\tt ext}(K)}_h := {\mathcal E} \left( \rho^{e,{\tt ext}(K)}_h, p^{e,\star}_h \right), \quad p^{e,\star}_h := \frac12 \big( p^{e,{\tt int}(K)}_h + p^{e,{\tt ext}(K)}_h \big).$$  
We use the same source term discretizations as in the ideal EOS case proposed in the paper. 
Then, one can verify that the resulting DG schemes are well-balanced for the general EOS \eqref{gEOS}, namely, Theorems \ref{thm:1DWB} and \ref{thm:WB2D} remain valid.

\subsection{Positivity}
Following the proofs in the paper,
we can also prove the positivity-preserving property of the resulting DG schemes for the general EOS \eqref{gEOS}. 
Specifically, we have the following conclusions.  

\begin{itemize}
\item For the positivity of the 1D first-order scheme, Theorem \ref{thm:1D1st} also holds for the general EOS, under the CFL condition \eqref{eq:CFL1F} with $\widehat \alpha_j$ redefined as 
$$
\widehat \alpha_j := \widehat \alpha_j^{F}+ \widehat \alpha_j^{S}, \qquad \widehat \alpha_j^{F} =  2\frac
{ e^{\star,-}_{j+\frac12} + e^{\star,+}_{j-\frac12}  } { e_h^e(x_j)  }  \max_{ {\bf U} \in \{    \overline {\bf U}_{j-1} , \overline {\bf U}_j, \overline {\bf U}_{j+1} \}  } \alpha_{\max} ({\bf U}  ),\qquad \widehat \alpha_j^{S}=  \frac{ \left| p^{e,\star}_{j+\frac12} - p^{e,\star}_{j-\frac12} \right|  }{ \overline \rho_j^e \sqrt{2 \overline e_j }   }
$$
\item For the positivity of the 1D high-order schemes, Theorem \ref{thm:1Dhigh} also holds for the general EOS, under the CFL condition \eqref{CFL2high} with $\widetilde \alpha_j$ redefined as
\begin{align*}    
& \widetilde \alpha_j := \widetilde \alpha_j^F + \widetilde \alpha_j^S + \overline \alpha_j^S,
\qquad \widetilde \alpha_j^F := 2~ {\rm max} \left\{ 
\frac{ e^{\star,-}_{j + \frac12}   } {   e_h^e( x_{j + \frac12}^-  )   } , 
\frac{ e^{\star,+}_{j - \frac12}   } {   e_h^e( x_{j - \frac12}^+  )   } \right\}
\max_{ {\bf U} \in \{  {\bf U}_{j-\frac12}^\pm , {\bf U}_{j+\frac12}^\pm \}  } \alpha_{\max} ({\bf U}  )  ,
\\ 
& \widetilde \alpha_j^S := \widehat \omega_1 h_j \max_{1\le \mu \le N} \left\{  \frac{ \left| \big(p_h^e\big)_x (   x_j^{(\mu)}  ) \right|
}{ \rho_h^e( x_j^{(\mu)} ) \sqrt{2 e_h( x_j^{(\mu)} ) } } \right\},
\quad  \overline \alpha_j^S := \widehat \omega_1 \frac{  \left| 
	p_{j+\frac12}^{e,\star} - p_{j-\frac12}^{e,\star} 
	-h_j \sum \limits_{ \mu=1 }^N \omega_\mu (p_h^e)_x(x_j^{(\mu)})
	\right|  }{ \overline \rho_j^e \sqrt{2 \overline e_j } }.
\end{align*}
Notice that $ (p_h^e)_x\big|_{I_j} $ is generally not a polynomial for general EOS, so that the exactness of 
the Gauss quadrature rule is not applicable for the integral $\int_{I_j} (p_h^e)_x {\rm d} x$ to simplify  $\overline \alpha_j^S$.     
\item For the positivity of the 2D first-order scheme, Theorem \ref{thm:2D1st} holds for the general EOS, under the CFL condition 
$$
\Delta t \left(  2 \frac{\widehat \alpha_K^F}{|K|} \sum_{ {\mathscr E} \in \partial K } 
|{\mathscr E}| \frac{  {\mathcal E} ( \overline \rho^e_K,  p^{e,\star}_{ {\mathscr E}, K })   }{  \overline e^e_K  } 
+ \widehat \alpha_K^S
\right)  \le 1.
$$
\item For the positivity of the 2D high-order schemes, Theorem \ref{thm:2Dhigh} holds for the general EOS, under the CFL condition 
\begin{equation*}
\Delta t \left( 
\widetilde \alpha_K^F \frac{ 2  |{\mathscr E}| e^{\star, {\tt int}(K) }_{{\mathscr E},\mu } }
{ |K| e^{e,{\tt int}(K)}_{{\mathscr E},\mu } }
+ \widetilde \alpha_K^S \frac{ \widehat \varpi_{{\mathscr E}}^{(\mu)} }{ \omega_\mu }
\right) \le \frac{ \widehat \varpi_{{\mathscr E}}^{(\mu)} }{ \omega_\mu }, 
\qquad   1\le \mu \le N,~\forall{\mathscr E} \in \partial K,~\forall K \in {\mathcal T}_h,
\end{equation*}
where $e^{\star, {\tt int}(K) }_{{\mathscr E},\mu }: = {\mathcal E} \left( \rho^{e,{\tt int}(K)}_{{\mathscr E},\mu }, p^{e,\star }_{{\mathscr E},\mu } \right)$, and 
\begin{align*}
&\widetilde \alpha_K^F := \max\Big\{
\max_{ {\mathscr E} \in \partial K, 1\le \mu \le N } \alpha_{{\bf n}_{{\mathscr E},K} } ( {\bf U}^{{\tt int}(K)}_{{\mathscr E},\mu } ), 
\max_{ {\mathscr E} \in \partial K, 1\le \mu \le N } \alpha_{ {\bf n}_{{\mathscr E},K} } ( {\bf U}^{{\tt ext}(K)}_{{\mathscr E},\mu } )
\Big\}, 
\\
&
\widetilde \alpha_K^{S} = \max_{1\le q \le Q} \left\{  
\frac{ \left \|  {\bm \nabla} p_h^e (   {\bf x}_K^{(q)}  ) \right \| }  { 
	\rho_h^e ( {\bf x}_K^{(q )} )
	\sqrt{ 
		2 e_h (   {\bf x}_K^{(q)}  ) } } 
\right\} + 
\frac{ \left\|  {\bm a}_K^e \right\| }{ 
	|K| \overline \rho_K^e
	\sqrt{ 2 \overline e_K } },
\\
& {\bm a}_K^e :=  \sum_{ {\mathscr E} \in \partial K }  \left( |{\mathscr E}| \sum_{\mu=1}^N \omega_\mu p^{e,\star}_h ( {\bf x}_{\mathscr E}^{(\mu)} )  {\bf n}_{ {\mathscr E}, K } \right)
- |K| \sum_{ q=1}^Q \varpi_q  {\bm \nabla} p_h^e (   {\bf x}_K^{(q)}  ).
\end{align*}
\end{itemize}
The proofs of the above conclusions are similar to those for the ideal EOS in the paper and thus are omitted here.

\section{On positivity of well-balanced DG methods with modified LF flux}\label{app:DGLF} 
In the paper, we have shown that the proposed DG methods, with the modified HLLC flux \eqref{eq:1Dflux} 
and a special source term discretization, are well-balanced and positivity-preserving, when a simple limiter is applied. 
It was shown in \cite{LiXingWBDG2016,LiXingWBFV2016}  that 
two modified LF fluxes can also serve as effective bases of 
well-balanced DG schemes for isothermal and polytropic equilibria, respectively. 
For completeness of this work as well as comparison purpose, we have also carefully 
investigated the positivity of well-balanced DG methods with those modfied LF fluxes. 
We observe that the modification in the polytropic equilibrium cases changes some   
properties of the LF flux and 
makes the positivity-preserving property of those methods 
questionable.  
We can prove the positivity only when the isothermal equilibria with the ideal EOS are considered, for which the modified LF flux 
is 
\begin{equation}\label{eq:LLFflux} 
\widehat{\bf F}_{j+\frac12} = 
\frac12  \left[ 
{\bf F} ( {\bf U}_{j+\frac12}^- ) 
+ {\bf F} ( {\bf U}_{j+\frac12}^+ ) 
- \alpha_{j+\frac12}^{\tt LF} \rho^{e,{\rm max}}_{j+\frac12}
\left( \frac{ {\bf U}_{j+\frac12}^+ } {  \rho_h^e( x_{j+\frac12}^+) }
- \frac{ {\bf U}_{j+\frac12}^- }{  \rho_h^e( x_{j+\frac12}^-) } \right) 
\right], 
\end{equation} 
where $\rho^{e,{\rm max}}_{j+1/2} \ge \max\{ \rho_h^e( x_{j+1/2}^- ), \rho_h^e( x_{j+1/2}^+ ) \}$, and 
$\alpha_{j+\frac12}^{\tt LF}$ denotes the numerical viscosity parameter satisfying $
\alpha_{j+\frac12}^{\tt LF} \ge  \max_{ {\bf U} \in \{ {\bf U}_{j+1/2}^-, {\bf U}_{j+1/2}^+ \}  } \alpha_{\max} ({\bf U}  ).$
Without loss of generality, here we present only the 1D positivity-preserving conclusions, and the extensions to the multidimensional cases are straightforward. 
Note that the following conclusions hold for 
the flux \eqref{eq:LLFflux} with either local or global numerical viscosity parameter.

We first study the positivity of the 1D first-order scheme.

\begin{theorem}\label{thm:LF1st}
Assume the stationary hydrostatic solution $\{\rho^e,p^e\}$ belongs to isothermal equilibria 
and that the modified LF flux \eqref{eq:LLFflux} is used.  If 
the DG polynomial degree $k=0$ and $\overline {\bf U}_{j} \in G$ for all $j$, we have
\begin{equation*}
\overline {\bf U}_j + \Delta t {\bf L}_j ( {\bf U}_h  ) \in G, \quad \forall j,
\end{equation*}
under the CFL-type condition 
\begin{equation*}
\widehat \alpha_j \Delta t \le h_j,
\end{equation*}
with 
$$\widehat \alpha_j := 
\frac{ \alpha_{j - \frac12}^{\tt LF}  \rho_{j- \frac12}^{e,max} + \alpha_{j + \frac12}^{\tt LF} \rho_{j + \frac12}^{e,max} }{ 2 \overline \rho^e_j }
+  \frac{ \left| p^{e,\star}_{j+\frac12} - p^{e,\star}_{j-\frac12} \right|  }{  \overline \rho_j^e  \sqrt{2 \overline e_j }   }.$$
\end{theorem}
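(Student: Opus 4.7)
The plan is to mirror the proof of Theorem \ref{thm:1D1st}: write
\[
\overline{\bf U}_j + \Delta t\,{\bf L}_j({\bf U}_h) = {\bf W}_F + {\bf W}_S
\]
with ${\bf W}_F := \eta\,\overline{\bf U}_j - \tfrac{\Delta t}{h_j}\bigl(\widehat{\bf F}_{j+\frac12}-\widehat{\bf F}_{j-\frac12}\bigr)$ and ${\bf W}_S := (1-\eta)\overline{\bf U}_j + \Delta t\,\overline{\bf S}_j$ for some $\eta\in(0,1]$, argue each lies in $G$ or $\overline G$, and conclude via Lemma \ref{lem3}. The only genuinely new work is the ${\bf W}_F$-step, where the HLLC-based estimate must be replaced by one tailored to the modified LF flux \eqref{eq:LLFflux}.

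First I would substitute \eqref{eq:LLFflux} into ${\bf W}_F$, using $\rho_h^e(x_{j\pm\frac12}^\mp)=\overline\rho_j^e$ and ${\bf U}_h|_{I_j}\equiv\overline{\bf U}_j$ (since $k=0$), and collect the terms containing $\overline{\bf U}_{j-1}$, $\overline{\bf U}_j$, $\overline{\bf U}_{j+1}$ separately. With $c_\pm := \alpha_{j\pm\frac12}^{\tt LF}\rho_{j\pm\frac12}^{e,\max}$ this yields
\[
{\bf W}_F = \Bigl(\eta - \tfrac{\Delta t(c_++c_-)}{2h_j\overline\rho_j^e}\Bigr)\overline{\bf U}_j + \tfrac{\Delta t\,c_+}{2h_j\overline\rho_{j+1}^e}\Bigl[\overline{\bf U}_{j+1} - \tfrac{\overline\rho_{j+1}^e}{c_+}{\bf F}(\overline{\bf U}_{j+1})\Bigr] + \tfrac{\Delta t\,c_-}{2h_j\overline\rho_{j-1}^e}\Bigl[\overline{\bf U}_{j-1} + \tfrac{\overline\rho_{j-1}^e}{c_-}{\bf F}(\overline{\bf U}_{j-1})\Bigr].
\]
The key observation is that the two built-in bounds $\rho_{j\pm\frac12}^{e,\max}\ge\overline\rho_{j\pm 1}^e$ and $\alpha_{j\pm\frac12}^{\tt LF}\ge\alpha_{\max}(\overline{\bf U}_{j\pm 1})$ on the modified LF flux together imply $\tfrac{\overline\rho_{j\pm 1}^e}{c_\pm}\alpha_{\max}(\overline{\bf U}_{j\pm 1})\le 1$, so Lemma \ref{lem:LFflux} places each bracketed quantity in $G$. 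Combined with nonnegativity of the scalar coefficient of $\overline{\bf U}_j$ (which is the condition $\Delta t\,\widehat\alpha_j^F\le\eta h_j$ with $\widehat\alpha_j^F:=\tfrac{c_++c_-}{2\overline\rho_j^e}$), Lemmas \ref{lem2} and \ref{lem3} give ${\bf W}_F\in G$.

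For ${\bf W}_S$, I would use that $k=0$ forces the pointwise-vs-average corrections in \eqref{eq:1Ds2}--\eqref{eq:1Ds3} to vanish and the integrals involving $v_x=0$ to drop out, leaving
\[
\overline{\bf S}_j = \tfrac{p^{e,\star}_{j+\frac12}-p^{e,\star}_{j-\frac12}}{h_j\overline\rho_j^e}\bigl(0,\overline\rho_j,\overline m_j\bigr)^\top,
\]
which is exactly the form addressed by Lemma \ref{lem:UcontrolS} with ${\bf a}=1$. The hypothesis $|\delta|/\sqrt{2\overline e_j}\le 1-\eta$ of that lemma becomes $\Delta t\,\widehat\alpha_j^S\le(1-\eta)h_j$, giving ${\bf W}_S\in\overline G$. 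Finally, optimizing by taking $\eta=\widehat\alpha_j^F/\widehat\alpha_j$ collapses the two conditions $\Delta t\,\widehat\alpha_j^F\le\eta h_j$ and $\Delta t\,\widehat\alpha_j^S\le(1-\eta)h_j$ into the single CFL bound $\widehat\alpha_j\Delta t\le h_j$, and applying Lemma \ref{lem3} once more to ${\bf W}_F+{\bf W}_S$ concludes the proof.

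The main obstacle is bookkeeping rather than ideas: one must factor the flux difference so that the coefficients in front of $\overline{\bf U}_{j\pm 1}$ and of ${\bf F}(\overline{\bf U}_{j\pm 1})$ are aligned to match the CFL number of Lemma \ref{lem:LFflux}. This alignment uses both defining inequalities of the modified flux simultaneously, and it is precisely this feature that fails for the polytropic modification of the LF flux referenced in Remark \ref{rem:WBPPconsideration}, explaining why the same route does not extend there.
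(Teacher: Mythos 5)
Your proposal is correct and follows essentially the same route as the paper: the flux part is expanded into exactly the same three groups (a nonnegative multiple of $\overline{\bf U}_j$ plus positive multiples of $\overline{\bf U}_{j+1}-\frac{\overline\rho_{j+1}^e}{c_+}{\bf F}(\overline{\bf U}_{j+1})$ and $\overline{\bf U}_{j-1}+\frac{\overline\rho_{j-1}^e}{c_-}{\bf F}(\overline{\bf U}_{j-1})$, each in $G$ by Lemma \ref{lem:LFflux} via the two defining inequalities of the modified flux), and the source part is treated by Lemma \ref{lem:UcontrolS} exactly as in Theorem \ref{thm:1D1st}. Your parameter $\eta$ with the final optimization $\eta=\widehat\alpha_j^F/\widehat\alpha_j$ is just a rescaled version of the convex splitting the paper fixes up front, so the two arguments coincide.
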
 

The proof of Theorem \ref{thm:LF1st} is based on the following decomposition and Lemma \ref{lem3} to show the positivity property of the homogeneous case.
\begin{align*}
\overline {\bf U}_j -\frac{\Delta t}{h_j} \left( \widehat{\bf F}_{j+\frac12} - \widehat{\bf F}_{j-\frac12}   \right) 
& = 
\left[
1 - \frac{\Delta t}{h_j} 
\left(  \frac{ \alpha_{j - \frac12}^{\tt LF}  \rho_{j- \frac12}^{e,max} + \alpha_{j + \frac12}^{\tt LF} \rho_{j + \frac12}^{e,max} }{ 2 \overline \rho^e_j } \right)
\right] \overline{\bf U}_j 
\\
& 
+ \Delta t  \frac{ \alpha_{j + \frac12}^{\tt LF} \rho_{j + \frac12}^{e,max} }{ 2 h_j \overline \rho^e_{j+1} } 
{\bf W}_1 + \Delta t \frac{  \alpha_{j - \frac12}^{\tt LF} \rho_{j - \frac12}^{e,max} }{ 2 h_j \overline \rho^e_{j-1} } 
{\bf W}_2, 
\end{align*}
with 
$$
{\bf W}_1 := \overline {\bf U}_{j+1} - \frac{ \overline \rho^e_{j+1} }{ \alpha_{j + \frac12}^{\tt LF} \rho_{j + \frac12}^{e,max}  } {\bf F}  \big( \overline {\bf U}_{j+1} \big) ,  \qquad 
{\bf W}_2 := \overline {\bf U}_{j-1} + \frac{ \overline \rho^e_{j-1} }{ \alpha_{j - \frac12}^{\tt LF} \rho_{j - \frac12}^{e,max}  } {\bf F}  \big( \overline {\bf U}_{j-1} \big),
$$
both of which belong to $G$ according to 
Lemma \ref{lem:LFflux}. Then, the subsequent part of the proof 
exactly follows the proof of Theorem \ref{thm:1D1st} and thus is omitted.

We then show the weak positivity of 1D high-order schemes.

\begin{theorem}\label{thm:1DhighDGLF}
Assume that the modified LF flux \eqref{eq:LLFflux} is used and that the stationary hydrostatic solution $\{\rho^e,p^e\}$ belongs to isothermal equilibria with 
projections satisfying  
\begin{equation*}
\rho^e_h( x )>0, \quad p^e_h(x)>0, \quad     \forall x \in \mathbb S_j,~~\forall j. 
\end{equation*}
If the numerical solution ${\bf U}_h$ satisfies 
\begin{equation*}
{\bf U}_h ( x ) \in G,  \quad     \forall x \in \mathbb S_j,~~\forall j,
\end{equation*} 
we have
\begin{equation*}
\overline {\bf U}_j + \Delta t {\bf L}_j ( {\bf U}_h  ) \in G,~~\forall j,
\end{equation*}
under the CFL-type condition 
\begin{equation*}
\widetilde \alpha_j \Delta t \le \widehat \omega_1 h_j. 
\end{equation*}
Here 
$
\widetilde \alpha_j := \widetilde \alpha_j^{F} + \widetilde \alpha_j^S + \overline \alpha_j^S,
$
with  $\widetilde \alpha_j^S$ and $\overline \alpha_j^S$ defined in \eqref{eq:tildealphajs}, and $\widetilde \alpha_j^{F}$ redefined as 
$$
\qquad \widetilde \alpha_j^{F} := {\rm max} \left\{ 
\frac{ \rho^{e,{\rm max} }_{j + \frac12}   } {   \rho_h^e( x_{j + \frac12}^-  )   } , 
\frac{ \rho^{e, {\rm max} }_{j - \frac12}   } {   \rho_h^e( x_{j - \frac12}^+  )   } \right\}
\max_{ {\bf U} \in \{  {\bf U}_{j-\frac12}^\pm , {\bf U}_{j+\frac12}^\pm \}  } \alpha_{\max} ({\bf U}  ).
$$
\end{theorem}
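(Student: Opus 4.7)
The plan is to mirror the proof of Theorem~\ref{thm:1Dhigh}, but with a splitting of the boundary contribution ${\bf W}_2$ tailored to the additive (Rusanov-type) structure of the modified LF flux~\eqref{eq:LLFflux} in place of the HLLC-specific splitting used there. Because the source-term discretization is unchanged, the analysis of the cell-average/source block is recycled verbatim.

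First I would introduce the Gauss-Lobatto representation $\overline{\bf U}_j=\sum_{\nu=1}^L\widehat\omega_\nu{\bf U}_h(\widehat x_j^{(\nu)})$ and, for an auxiliary parameter $\eta\in(0,1]$, write
\begin{equation*}
\overline{\bf U}_j+\Delta t\,{\bf L}_j({\bf U}_h)={\bf W}_1+{\bf W}_2+{\bf W}_3
\end{equation*}
exactly as in the splitting in~\eqref{eq:wklproof3132}. The interior-Lobatto piece ${\bf W}_1=\eta\sum_{\nu=2}^{L-1}\widehat\omega_\nu{\bf U}_h(\widehat x_j^{(\nu)})\in\overline G$ by the hypothesis on ${\bf U}_h$. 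The cell-average/source piece ${\bf W}_3=(1-\eta)\overline{\bf U}_j+\Delta t\,\overline{\bf S}_j$ is treated exactly as in Theorem~\ref{thm:1Dhigh}: introduce $\lambda\in[0,1]$, apply Lemma~\ref{lem:UcontrolS} at the Gauss nodes and to the cell-average part, and invoke Lemma~\ref{lem3} to conclude ${\bf W}_3\in\overline G$ under $\Delta t\,\widetilde\alpha_j^S\le\widehat\omega_1h_j(1-\eta)\lambda$ and $\Delta t\,\overline\alpha_j^S\le\widehat\omega_1h_j(1-\eta)(1-\lambda)$.

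The main new work is showing ${\bf W}_2:=\eta\widehat\omega_1({\bf U}_{j-1/2}^++{\bf U}_{j+1/2}^-)-\tfrac{\Delta t}{h_j}(\widehat{\bf F}_{j+1/2}-\widehat{\bf F}_{j-1/2})\in G$. I would exploit the splitting
\begin{equation*}
\widehat{\bf F}_{j+\frac12}=\Bigl[\tfrac12{\bf F}({\bf U}_{j+\frac12}^-)+\tfrac{\alpha_{j+\frac12}^{\tt LF}\rho_{j+\frac12}^{e,\max}}{2\rho_h^e(x_{j+\frac12}^-)}{\bf U}_{j+\frac12}^-\Bigr]+\Bigl[\tfrac12{\bf F}({\bf U}_{j+\frac12}^+)-\tfrac{\alpha_{j+\frac12}^{\tt LF}\rho_{j+\frac12}^{e,\max}}{2\rho_h^e(x_{j+\frac12}^+)}{\bf U}_{j+\frac12}^+\Bigr]
\end{equation*}
(and analogously at $x_{j-1/2}$) to group ${\bf W}_2$ into four terms $T_1,T_2,T_3,T_4$, one per interface state. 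The two interior contributions $T_1$ (at ${\bf U}_{j+1/2}^-$) and $T_3$ (at ${\bf U}_{j-1/2}^+$) inherit the $\eta\widehat\omega_1$ Lobatto weight and take the form $c_1{\bf U}-c_2{\bf F}({\bf U})\cdot{\bf n}$ with $c_2=\Delta t/(2h_j)$; Lemma~\ref{lem:LFflux} combined with Lemma~\ref{lem2} places each in $G$ provided $c_1\ge c_2\alpha_{\max}({\bf U})$, and absorbing the ${\bf F}$-coefficient into the viscosity-coefficient via $\alpha_{j\pm1/2}^{\tt LF}\,\rho_{j\pm1/2}^{e,\max}/\rho_h^e\ge\alpha_{\max}$ (which uses $\rho^{e,\max}\ge\rho_h^e$) reduces this precisely to $\Delta t\,\widetilde\alpha_j^F\le\eta\widehat\omega_1h_j$. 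The two external contributions $T_2$ (at ${\bf U}_{j+1/2}^+$) and $T_4$ (at ${\bf U}_{j-1/2}^-$) have the same form but with ratio $c_1/c_2=\alpha^{\tt LF}\rho^{e,\max}/\rho_h^e\ge\alpha_{\max}({\bf U})$, so they lie in $G$ unconditionally. A final application of Lemma~\ref{lem3} then yields ${\bf W}_2\in G$.

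Collecting the three sub-conditions and optimizing over $\eta\in(0,1]$ and $\lambda\in[0,1]$ exactly as in the closing paragraph of the proof of Theorem~\ref{thm:1Dhigh} gives $\eta_\star=\widetilde\alpha_j^F/\widetilde\alpha_j$, $\lambda_\star=\widetilde\alpha_j^S/(\widetilde\alpha_j^S+\overline\alpha_j^S)$, and collapses everything into the single CFL $\widetilde\alpha_j\Delta t\le\widehat\omega_1h_j$. The main obstacle I anticipate is the four-way splitting of ${\bf W}_2$: obtaining the sharp factor (one, not two) in front of $\rho^{e,\max}/\rho_h^e$ in $\widetilde\alpha_j^F$ hinges on the right pairing of the ${\bf F}({\bf U})$- and centered-viscosity-coefficients inside the interior terms $T_1,T_3$, with the monotonicity $\rho^{e,\max}\ge\rho_h^e$ used exactly once to merge them before invoking Lemma~\ref{lem:LFflux}; getting the arithmetic sharp there, rather than a loose factor of two, is what mirrors the cleaner HLLC-based bound.
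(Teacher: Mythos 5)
Your proposal is correct and follows essentially the same route as the paper: the identical three-way splitting ${\bf W}_1+{\bf W}_2+{\bf W}_3$ with ${\bf W}_1,{\bf W}_3$ recycled from Theorem \ref{thm:1Dhigh}, a four-term decomposition of ${\bf W}_2$ exploiting the additive structure of the modified LF flux, and the same final optimization over $\eta,\lambda$. The only (cosmetic) difference is in the interior-state terms: the paper adds and subtracts an extra viscosity contribution so that each ${\bf F}$-term pairs into an unconditionally admissible state ${\bf W}_2^{(i)}$, leaving a bare ${\bf U}$ whose coefficient nonnegativity is the CFL condition, whereas you apply Lemma \ref{lem:LFflux} directly to the combined $c_1{\bf U}\pm c_2{\bf F}({\bf U})$ and recover the identical constraint via $\alpha_{\max}\le\alpha^{\tt LF}\rho^{e,\max}/\rho^e_h$.
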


\begin{proof}
Similar to the proof of Theorem \ref{thm:1Dhigh}, we define 
\begin{equation}\label{eq:proof2ddfg}
{\bf W}_2 :=  
\eta \widehat \omega_1  \big( {\bf U}_{j-\frac12}^+ +  {\bf U}_{j+\frac12}^- \big) 
-\frac{\Delta t}{h_j} \left( 
\widehat{\bf F}_{j+\frac12}  - \widehat{\bf F}_{j-\frac12}
\right),
\end{equation}
where $\eta$ is an arbitrary parameter in $(0,1]$. 
Plugging the numerical flux \eqref{eq:LLFflux} in \eqref{eq:proof2ddfg}, we can reformulate ${\bf W}_2$ into the following form 
\begin{equation}\label{eq:W2DGLF}
{\bf W}_2 = \left( 
\eta \widehat \omega_1 - \frac{ \Delta t } {h_j} \frac{ \alpha_{j - \frac12}^{\tt LF} \rho_{j- \frac12}^{e,{\rm max}}} 
{ \rho_h^e ( x_{j-\frac12}^+ ) } 
\right) {\bf U}_{j-\frac12}^+ 
+ \left( 
\eta \widehat \omega_1 - \frac{ \Delta t } {h_j} \frac{ \alpha_{j + \frac12}^{\tt LF} \rho_{j+ \frac12}^{e,{\rm max}}} 
{ \rho_h^e ( x_{j+ \frac12}^- ) } 
\right) {\bf U}_{j+\frac12}^-  + \frac{\Delta t}{2h_j}  \sum_{i=1}^4 {\bf W}_2^{(i)}, 
\end{equation}
with
\begin{align*}
& {\bf W}_2^{(1)} :=  \frac{ \alpha_{j - \frac12}^{\tt LF} \rho_{j- \frac12}^{e,{\rm max}} } 
{ \rho_h^e ( x_{j-\frac12}^- ) } 
\left( 
{\bf U}_{j-\frac12}^- + \frac{ \rho_h^e ( x_{j-\frac12}^- ) }{ \alpha_{j - \frac12}^{\tt LF} \rho_{j- \frac12}^{e,{\rm max}} }  {\bf F} \big(  {\bf U}_{j-\frac12}^- \big)
\right), 
\\
& {\bf W}_2^{(2)} :=  \frac{ \alpha_{j - \frac12}^{\tt LF} \rho_{j- \frac12}^{e,{\rm max}} } 
{ \rho_h^e ( x_{j-\frac12}^+ ) } 
\left( 
{\bf U}_{j-\frac12}^+ + \frac{ \rho_h^e ( x_{j-\frac12}^+ ) }{ \alpha_{j - \frac12}^{\tt LF} \rho_{j- \frac12}^{e,{\rm max}} }  {\bf F} \big(  {\bf U}_{j-\frac12}^+ \big)
\right), 
\\
& {\bf W}_2^{(3)} :=  \frac{ \alpha_{j + \frac12}^{\tt LF} \rho_{j+ \frac12}^{e,{\rm max}} } 
{ \rho_h^e ( x_{j+\frac12}^- ) } 
\left( 
{\bf U}_{j+\frac12}^- - \frac{ \rho_h^e ( x_{j+\frac12}^- ) }{ \alpha_{j + \frac12}^{\tt LF} \rho_{j+ \frac12}^{e,{\rm max}} }  {\bf F} \big(  {\bf U}_{j+\frac12}^- \big)
\right), 
\\
& {\bf W}_2^{(4)} :=  \frac{ \alpha_{j + \frac12}^{\tt LF} \rho_{j+ \frac12}^{e,{\rm max}} } 
{ \rho_h^e ( x_{j+\frac12}^+ ) } 
\left( 
{\bf U}_{j+\frac12}^+ - \frac{ \rho_h^e ( x_{j+\frac12}^+ ) }{ \alpha_{j + \frac12}^{\tt LF} \rho_{j+ \frac12}^{e,{\rm max}} }  {\bf F} \big(  {\bf U}_{j+\frac12}^+ \big)
\right).
\end{align*}
Lemmas \ref{lem:LFflux} and \ref{lem2} together imply that ${\bf W}_2^{(i)} \in  G$, $1\le i \le 4$. 
Using equation \eqref{eq:W2DGLF} and Lemma \ref{lem3}, we can conclude 
${\bf W}_2 \in  G$ if $\Delta t$ satisfies 
$$
\Delta t \widetilde \alpha_j^{F} \le \eta \widehat \omega_1 h_j.
$$
The subsequent part of this proof exactly follows the proof of Theorem \ref{thm:1Dhigh}. 
\end{proof}

\bibliographystyle{siamplain}
\bibliography{WBPP,WB2}

\end{document}